\documentclass[reqno,11pt,oneside]{amsart}
\usepackage[colorlinks, citecolor=magenta,linkcolor=blue]{hyperref}
\usepackage{amssymb}
\usepackage{geometry}  
\usepackage{physics}
\usepackage{enumerate}
\usepackage{array}
\usepackage{tikz}
\usetikzlibrary{arrows.meta}


\theoremstyle{plain}
\newtheorem{theorem}{Theorem}[section]

\theoremstyle{definition}
\newtheorem{definition}[theorem]{Definition}
\newtheorem{proposition}[theorem]{Proposition}
\newtheorem{remark}[theorem]{Remark}
\newtheorem{corollary}[theorem]{Corollary}
\newtheorem{lemma}[theorem]{Lemma}
\newtheorem{example}[theorem]{Example}

\renewenvironment{proof}{{\noindent \bf  Proof.}}{\qed}

\newcommand{\Rp}{\mathbb R_+}
\newcommand{\DCs}[1]{\partial_0^{#1}}
\newcommand{\DRL}[1]{D_0^{#1}}
\newcommand{\DCp}[1]{D_{\hspace{-2pt}*0}^{#1}}

\makeatletter
\let\orgdescriptionlabel\descriptionlabel
\renewcommand*{\descriptionlabel}[1]{%
  \let\orglabel\label
  \let\label\@gobble
  \phantomsection
  \edef\@currentlabel{#1}%
  \let\label\orglabel
  \orgdescriptionlabel{#1}%
}
\makeatother

\allowdisplaybreaks

\numberwithin{equation}{section}
\begin{document}
\title[Censored stable subordinators and  fractional   derivatives]{Censored stable subordinators and  fractional   derivatives}

\author[Qiang Du]{Qiang Du\hspace{2pt}}
\address{Qiang Du
\newline Department of Applied Physics and Applied Mathematics, and Data Science Institute
\newline Columbia University
\newline New York 10027, USA}
\email {qd2125@columbia.edu}

\author[Lorenzo Toniazzi]{\hspace{2pt}Lorenzo Toniazzi\hspace{2pt}}
\address{Lorenzo Toniazzi
\newline Department of Mathematics and Statistics
\newline Otago University
\newline Dunedin 9054, NZ}
\email {ltoniazzi@maths.otago.ac.nz}

\author[Zirui Xu]{\hspace{2pt}Zirui Xu}
\address{Zirui Xu
\newline Department of Applied Physics and Applied Mathematics
\newline Columbia University
\newline New York 10027, USA}
\email {zx2250@columbia.edu}


\thanks{Research supported in part by  NSF DMS-2012562,  the ARO MURI Grant W911NF-15-1-0562 and the Marsden Fund administered by the Royal Society of New Zealand.\\
\indent The following statement is added upon the request of the publisher: ``This paper is published (in revised form) in
\emph{Fract. Calc. Appl. Anal.} Vol. \textbf{24}, No 4 (2021), pp. 1035--1068, DOI: 10.1515/fca-2021-0045, and is available online at \href{https://www.degruyter.com/journal/key/FCA/html}{https://www.degruyter.com/journal/key/FCA/html}, so please always cite it with the journal's coordinates".}
\subjclass[2010]{26A33, 60G52, 60G40}


\begin{abstract}
Based on the popular Caputo fractional derivative of order $\beta$ in $(0,1)$, we define the censored fractional derivative on 
the positive half-line $\Rp$. This derivative proves to be the Feller generator of the censored (or resurrected) decreasing $\beta$-stable   process in $\Rp$. We provide a series representation for the inverse of this censored fractional derivative,  which we use to study general censored initial value problems. 
We are then able to prove that this censored process  hits the boundary in a finite time $\tau_\infty$, whose expectation is proportional to that of the
first passage time of the $\beta$-stable subordinator. We also show that the censored relaxation equation is solved by the Laplace transform of $\tau_\infty$.
This relaxation solution proves to be a completely monotone series, with algebraic decay 
one order faster than its Caputo counterpart, leading, surprisingly, to a new regime of fractional relaxation models.
   Lastly, we discuss how this work identifies a new  sub-diffusion model. 
\end{abstract}

\keywords{Fractional initial value problem, censored stable subordinator, fractional relaxation equation, Mittag-Leffler function}

\maketitle

\tableofcontents

\section{Introduction}\label{Section: Introduction}
Fractional derivatives, a special class of nonlocal integral and pseudo-differential operators \cite{du19,Grubb17,Uma15,Ja05}, have been successfully employed to model heterogeneities and nonlocal interactions in many applications (see, e.g., \cite{whatis,MK04,pod,mainardi}). They also enjoy an interesting 
mathematical theory  with deep connections to L\'evy processes (see, e.g., \cite{Meerschaert2012,Bogdan,kilbas,Kol19,Kwa17}). For example, the Caputo derivative \cite{kai} of order  $\beta\in(0,1)$  on the positive half-line $\mathbb R^+$, plays  important roles in modelling non-exponential relaxation \cite{kai,MK04} and  non-Markovian sub-diffusive  dynamics \cite{MS08,BC11,hai18}. For a smooth function $u$ vanishing  outside $\Rp$, the Caputo derivative equals the Riemann–Liouville (R–L) derivative $\DRL\beta$ \cite{kai} given by
\begin{align}
\DRL\beta u(x)&
=\int_0^x \big(u(x)-u(x-r)\big)\frac{r^{-1-\beta}}{\big|\Gamma(-\beta)\big|}\,{\rm d}r+u(x)\frac{x^{-\beta}}{\Gamma(1-\beta)},\quad x>0.
\label{eq:Cintro}
\end{align}
Probabilistically, $-\DRL\beta$  generates a killed L\'evy process, which is the decreasing $\beta$-stable process $S^1=\{S^1_s\}_{s\ge0}$ killed at time $\tau_1$, the first exit time from $\Rp$ \cite{BDP11,JK02}. Intuitively, the first summand in \eqref{eq:Cintro} describes the decreasing $\beta$-stable   jumps landing inside $\Rp$, while  $x^{-\beta}/\Gamma(1-\beta)=\int_x^\infty r^{-1-\beta}\big/\big|\Gamma(-\beta)\big|\,{\rm d}r$ is the killing coefficient for the jumps landing outside   $\Rp$. In this work, we introduce what we call the \emph{censored fractional derivative} $\DCs\beta$, allowing the representation
\begin{equation}
\label{r^-1-beta representation of D_x^beta}
\DCs\beta u(x)=\int_0^x \big(u(x)-u(x-r)\big)\frac{r^{-1-\beta}}{\big|\Gamma(-\beta)\big|}\,{\rm d}r,\quad x>0.
\end{equation}
It is intuitively clear that $-\DCs\beta$ only allows the decreasing $\beta$-stable jumps to land inside $\Rp$, and suppresses
those landing outside $\Rp$. Indeed we prove that it is the (Feller) generator of $S^c=\{S^c_s\}_{s\ge0}$, the \emph{censored} decreasing $\beta$-stable   process in $\Rp$. 
We will construct  $S^c$ by repeatedly resurrecting in situ the killed decreasing $\beta$-stable process, following the canonical Ikeda–Nagasawa–Watanabe (INW) piecing together procedure \cite{INW66}.
  (Cf. \cite[Remark 3.3]{kyp14} for two other notions of ``censoring" a process.) 

We initiate the study of the censored fractional derivative, and  then apply its theory to derive several new and non-trivial results about the censored stable subordinator, as we now explain. 
We first prove the  well-posedness of the basic initial value problem (IVP)
 \begin{equation}
\left\{
\begin{aligned}
\DCs\beta u(x)&=g(x), &&x\in(0,T],\\
u(x)&=u_0, &&x=0,
\end{aligned}\right.
\label{eq:main_intro}
\end{equation}
for  any $T>0,\;u_0\in\mathbb R$ and certain $g\in C(0,T]$. Our proof is based on constructing the candidate solution $u=u_0+I^\beta_0g$, where $I^\beta_0$ allows a probabilistic series representation and the expected potential representation, namely
\begin{align}\label{eq:serintro}
I^\beta_0 g(x)&= J^\beta_0g(x)+\sum_{j=1}^{\infty}\mathbb E_x\Big[ J^\beta_0 g(X_j)\Big]\\ \label{eq:fkintro}
&=\mathbb E_x\bigg[\int_0^{\tau_\infty } g(S^c_s)\,{\rm d}s\bigg].
\end{align}
Here,  $J^\beta_0$  is the R–L integral, i.e. the inverse of $\DRL\beta$, given by
\begin{equation}\label{eq:Jbetaintro}
    J^\beta_0g(x)=\int_0^xg(y)\frac{(x-y)^{\beta-1}}{\Gamma(\beta)}\,{\rm d}y =\mathbb E_x\bigg[\int_0^{\tau_1}g(S^1_s)\,{\rm d}s\bigg],
\end{equation}
where the second identity is the known potential representation for $J^\beta_0$; the discrete-time process $X\,|\,X_0\!=\!x$ is defined as $X_j:= x\prod_{i=1}^j B_i$, where $\{B_i \}_{i\in\mathbb N}$ is an \textit{i.i.d.} collection of beta-distributed random variables with parameters $1-\beta$ and $\beta$; and $\tau_\infty $ is the lifetime of $S^c$.
The equivalence of \eqref{eq:serintro} and \eqref{eq:fkintro} is due to the equality in law between $X_j$ and $S^c$ at its $j$-th resurrection time, combined with the   second identity in \eqref{eq:Jbetaintro} (see Remark \ref{rmk:sereqfk} for more details).  The way we solve \eqref{eq:main_intro} is to regard it as a linear R–L IVP   $\DRL\beta u=k\hspace{0.6pt}u+g,\;u(0)=0$    
with the  coefficient $k(x)=x^{-\beta}/\Gamma(1\hspace{-0.7pt}-\hspace{-0.7pt}\beta)$. It turns out that the formula given in \cite[Theorem 7.10]{kai} for bounded $k$ still converges for this specific unbounded $k$, allowing us to construct the solution. (As for more general $k$ that  may diverge as $O(x^{-\beta})$, \cite[Example 3.4]{LPV15} gave a non-constructive proof of the existence result.) This explicit solution then allows us to establish the (global) well-posedness of general IVPs $\DCs\beta u = f(x, u)$, $u(0)=u_0$, for certain Lipschitz data $f$.

Using the results above, we are able to solve the linear IVP   $\DCs\beta u = \lambda u$, $u(0)=u_0$, for  any $\lambda \in\mathbb R$. We obtain the Mittag-Leffler-type representation for its solution
\begin{equation}\label{solution to eigenvalue problem}
u(x) = u_0\sum_{N=0}^\infty\lambda ^Nx^{\beta N}\prod_{n=1}^N\bigg(\frac{\Gamma(1+n\beta)}{\Gamma(n\beta\hspace{-0.7pt}+\hspace{-0.7pt}1\hspace{-0.7pt}-\hspace{-0.7pt}\beta)}-\frac1{\Gamma(1\hspace{-0.7pt}-\hspace{-0.7pt}\beta)}\bigg)^{-1},
\end{equation}
where an empty product equals 1 by convention (also, $u(x)=u_0$ if $\lambda=0$) and each factor  of the indexed product is positive by \eqref{inequality for gamma functions}. Surprisingly, for $\lambda<0$, this solution decays at the fast algebraic rate  $x^{-1-\beta}$ (Theorem \ref{thm:decay}), which we believe is a new regime for fractional relaxation models. Indeed the Caputo fractional relaxation solution $u_0E_\beta(\lambda x^\beta)$ decays at the rate $x^{-\beta}$ \cite[Theorem 7.3]{kai}, where   $E_\beta(x )=\sum_{n=0}^\infty x^n/\Gamma(n\beta\hspace{-0.7pt}+\hspace{-0.7pt}1)$ is the Mittag-Leffler function. Moreover, the lagging and leading coupled fractional relaxation equations in \cite{BMS04,WSJMS12} model the decay rate $x^{-\gamma}$ for some $\gamma\in(0,1)$. Our proof (inspired by  \cite[Theorem 3.2]{DYZ17}) is based on maximum principle and turns out to be versatile, albeit elementary.
Indeed the same argument proves the decay rate $x^{-1\hspace{-0.7pt}-\alpha}$ of the solution to $\DCs\beta u = \lambda x^{\alpha-\beta}u\;(\lambda<0 ,\;\alpha>0)$  (see Proposition \ref{thm:decay 1+alpha}), which is again one order faster than its Caputo counterpart (expressed by the Kilbas–Saigo function \cite{de2014fractional}). Moreover, we will show how to adapt this argument to the Caputo setting to give  new and simple proofs of the two-sided uniform bounds of $E_\beta$ and more generally, a class of Kilbas–Saigo functions,   which are the recent results in \cite[Theorem 4]{simon2014comparing} and \cite[Proposition 4.12]{BSV19}, respectively. 
This very argument may have even broader applications, e.g., in general Caputo-type relaxation problems (corresponding to general killed subordinators), as we discuss in Remark \ref{rmk:caprelax}-(iii). 

As a  special case of \eqref{eq:fkintro},  we   have the identity
\begin{equation}
\mathbb E_x[\tau_\infty]=\mathbb E_x[\tau_1] \frac{\beta\pi}{\beta\pi\hspace{-0.7pt}-\hspace{-0.7pt}\sin(\beta\pi)},\;\;\text{where}\;\;\mathbb E_x[\tau_1]=\frac{x^\beta}{\Gamma(\beta\hspace{-0.7pt}+\hspace{-0.7pt}1)},
 \label{eq:ltintro}
\end{equation}
which implies that $S^c$ hits 0 in finite time, a fact that we believe has not been shown before. This is fundamental and not obvious, especially in view of \cite[Theorem 1.1-(1)]{BBC03}, which proves that the censored \emph{symmetric} $\beta$-stable L\'evy process never hits the boundary, whether censored in an interval or $\Rp$. (Also, censored decreasing compound Poisson processes do not hit the barrier in finite time, and our numerical simulations suggest neither do censored  gamma subordinators.) 
We are then able to show several more connections between the analytic and probabilistic aspects of $\DCs\beta$. That is, we will prove that $S^c$ is indeed a Feller process generated by $-\DCs\beta$, and that the exit problem for $\tau_\infty$ is solved by \eqref{solution to eigenvalue problem}, i.e.
\begin{equation}\label{eq:exptauinf_intro}
    u_0\hspace{0.7pt}\mathbb E_x\hspace{-0.6pt}\big[\hspace{-0.8pt}\exp\{\lambda \tau_\infty\}\big]\text{ equals the  series }\eqref{solution to eigenvalue problem},\;\text{for all $x>0\;\text{and }\lambda\in\mathbb R$}.
\end{equation}
As a consequence of \eqref{eq:exptauinf_intro}, we can obtain all the moments of $\tau_\infty$ and confirm the complete monotonicity of \eqref{solution to eigenvalue problem}. We emphasise that \eqref{eq:exptauinf_intro} is significantly harder to prove than Caputo's counterpart $ \mathbb E_x\hspace{-0.6pt}\big[\hspace{-0.8pt}\exp\{\lambda \tau_1\}\big]=E_\beta(\lambda x^\beta)$. This is mainly due to the inapplicability of Laplace transforms to $S^c$ and the complexity of the coefficients in \eqref{solution to eigenvalue problem} (see Remark \ref{rmk:exptau} for more detail).  Nonetheless, we obtain a proof by combining our series solution to the resolvent equation $\DCs\beta u=\lambda u+g$ with a simple semigroup theory argument, following  \cite[Corollary  5.1]{HK16}. We could alternatively try combining our IVP theory with standard potential theory (see, e.g., \cite[Chapter 3]{CZ12}) appplied to the Feynman–Kac semigroup of $-\DRL\beta+(\lambda\hspace{-0.7pt}+\hspace{-0.7pt}k)$, but it would be more involved. (We also remark that
\eqref{eq:exptauinf_intro}   serves as an efficient alternative to numerically compute \eqref{solution to eigenvalue problem} for $\lambda<0$.)

Lastly, we discuss  how this work sets the foundations for the study of a new time-fractional diffusion equation $\DCs\beta  u =\Delta u/2$, 
solved by the process $\{B_{\tau_\infty(t)}\}_{t\ge0}$. (Here $\DCs\beta$ acts on the time variable, $B$ is a Brownian motion independent of  $\tau_\infty(t):=\tau_\infty\,|\,S^c_0\!=\!t$.) This is the censored analogue of the Caputo time-fractional diffusion equation $\DRL\beta \big[u\hspace{-0.7pt}-\hspace{-0.7pt}u(0)\big] =\Delta u/2$, which is solved by the fractional kinetic process $\{B_{\tau_1(t)}\}_{t\ge0}$ (with $B$ independent of the inverse stable subordinator $\tau_1(t):=\inf\{s: t\!<\!-S_s^1\}$), a non-Markovian sub-diffusion process  arising from several central limit theorems \cite{MS08,BC11,hai18}. As we discuss in Remark \ref{rmk:Btau}-(i), although both $B_{\tau_1}$ and $B_{\tau_\infty}$ are sub-diffusion processes (due to \eqref{eq:ltintro}), their respective characteristic functions, $E_\beta(\lambda t^\beta)$ and \eqref{solution to eigenvalue problem} (for some $\lambda<0$), display strikingly different decay rates (due to our results on relaxation solutions).
 
This work is organized as follows: Section \ref{sec:preliminaries} introduces notation, recalls basic results on fractional calculus, defines the censored fractional derivative, and studies the solution kernels; Section \ref{sec:wellposedness} focuses on the well-posedness and series representation of the solution to  \eqref{eq:main_intro}, then addresses linear and non-linear censored IVPs; in Section \ref{sec:hit_0} we construct the censored decreasing $\beta$-stable process and apply our IVP theory to its study.

\section{Preliminary notation and definitions}\label{sec:preliminaries}
Throughout this article, we denote by $\beta$ ($0<\beta<1$) the order of fractional derivatives, and by $[0,T]$ ($0<T<\infty$) the interval of interest. We denote by $\mathbb N$, $\mathbb R$ and $\Rp$ the sets of positive integers, real numbers and positive numbers, respectively. For any interval $\Omega\subseteq\mathbb R$ we denote by $C(\Omega),C^1(\Omega),C^{0,\beta}(\Omega)$ and $L^1 (\Omega)$ the real functions on $\Omega$ that are continuous, continuously differentiable, $\beta$-H\"older continuous and Lebesgue integrable, respectively. We abbreviate $C(\Omega)\cap L^1(\Omega)$ to $C\cap L^1(\Omega)$.  For compact   $\Omega$  we denote by $\Vert\cdot\Vert_{C(\Omega)}$ the sup norm. We denote by $\Gamma$ the gamma function and frequently use without mention the standard identities  $\Gamma(2\hspace{-0.7pt}-\hspace{-0.7pt}\alpha)=(1\hspace{-0.7pt}-\hspace{-0.7pt}\alpha)\Gamma(1\hspace{-0.7pt}-\hspace{-0.7pt}\alpha)$ for all $\alpha\in\mathbb R\backslash\mathbb N$, $\Gamma(\beta\hspace{-0.7pt}+\hspace{-0.7pt}1)\Gamma(1\hspace{-0.7pt}-\hspace{-0.7pt}\beta)=\beta\pi/\sin(\beta\pi)$ and
\begin{equation*}
\int_0^x(x-r)^{\gamma-1}r^{\alpha-1}\,{\rm d}r = x^{\gamma+\alpha-1} \frac{\Gamma(\gamma)\Gamma(\alpha)}{\Gamma(\gamma+\alpha)}\;\text{for all}\;\alpha,\,\gamma,\,x>0.
\end{equation*}
We also rely crucially on the inequality (which we prove in Lemma \ref{thm:kseries})
\begin{equation}
\label{inequality for gamma functions}
\Gamma(\alpha\hspace{-0.7pt}+\hspace{-0.7pt}1\hspace{-0.7pt}-\hspace{-0.7pt}\beta)<\Gamma(1\hspace{-0.7pt}+\hspace{-0.7pt}\alpha)\Gamma(1\hspace{-0.7pt}-\hspace{-0.7pt}\beta)\;\text{for all}\;\alpha>0.
\end{equation}

\subsection{R–L calculus and fractional function spaces}

We present some basic results about the R–L fractional derivative, and proofs are given to make our presentation self-contained. We refer to \cite{kai} for a general study of Caputo/R–L derivatives.

\begin{definition}
\label{define R-L integral} 
For $\beta\in(0,1),\,u\in C\cap L^1(0,T]$, we define the \emph{R–L integral}
\begin{align*}
J^\beta_0 u(x)&=\int_0^x\frac{(x\hspace{-0.7pt}-\hspace{-0.7pt}r)^{\beta-1}}{\Gamma(\beta)} u(r)\,{\rm d}r,\quad x\in(0,T],\\
\intertext{we define the function spaces}
C_\beta(0,T]&=\Big\{u\in C\cap L^1(0,T]:\,J^{1-\beta}_0u\in C^1(0,T]\Big\},\\
 C_\beta[0,T] &= C[0,T]\cap C_\beta(0,T],\\
\intertext{and for $u\in C_\beta(0,T],\,x\in(0,T]$, we define the \emph{R–L derivative}}
\DRL\beta u(x)&=\frac{\rm d}{{\rm d}x}J^{1-\beta}_0u(x)=\frac{\rm d}{{\rm d}x}\int_0^x\frac{(x\hspace{-0.7pt}-\hspace{-0.7pt}r)^{-\beta}}{\Gamma(1\hspace{-0.7pt}-\hspace{-0.7pt}\beta)}u(r)\,{\rm d}r.
\end{align*}

\end{definition}

\begin{remark}$ $ \begin{enumerate}[(i)] \item Note that $C_\beta(0,T]$ is chosen so that the image of $\DRL\beta$ is contained in $C(0,T]$. Moreover, $C_\beta[0,T]$ is chosen to be the solution space, as we will explain in Remark \ref{strong solution space}. 
 
 \item Note that $C_\beta[0,T]$ is not a subspace of $C^{0,\beta}[0,T]$. Indeed $x^\alpha$ is in $C_\beta[0,T]$ for all $\alpha \ge 0$, but  in  $C^{0,\beta}[0,T]$ only  if $\alpha\geq \beta$.
\end{enumerate}
 
\end{remark}

\begin{lemma}\label{lem:prelim}
The following relations between $\DRL\beta$ and $J_0^\beta$ hold (proved in Appendix \ref{app:proofofelemprop}).

\begin{enumerate}[(i)]
	\item If $u\in C\cap L^1(0,T]$, then $J^\beta_0 u\in C\cap L^1(0,T]$.
	
	\item If $u\in C\cap L^1(0,T]$, then $J^\beta_0 u\in C_\beta(0,T]$ and $\DRL\beta J^\beta_0u=u$.
	
	\item  Assume $g\in C\cap L^1(0,T]$. Then
	\begin{equation*}
	u=J^\beta_0g \quad
		 \text{if and only if}
		\quad
	\left\{
	\begin{aligned}
	&u\in C_\beta(0,T],\\
	&\DRL\beta u=g,\\
	&\lim_{x\rightarrow0}J^{1-\beta}_0u(x)=0.
	\end{aligned}
	\right.
	\end{equation*}

	\item If $u\in C_\beta[0,T]$  satisfies $\DRL\beta u=0$, then $u=0$. 

\end{enumerate}
\end{lemma}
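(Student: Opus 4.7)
The plan is to tackle the four parts sequentially since each builds on the previous. Parts (i) and (ii) amount to routine but careful manipulations of the R-L integral via Fubini; parts (iii) and (iv) are then essentially consequences.

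For (i), I would first bound $\int_0^T |J^\beta_0 u(x)|\,{\rm d}x$ by Fubini, obtaining
\begin{equation*}
\int_0^T |J^\beta_0 u(x)|\,{\rm d}x \le \int_0^T \int_r^T \frac{(x-r)^{\beta-1}}{\Gamma(\beta)}\,{\rm d}x\,|u(r)|\,{\rm d}r = \int_0^T\frac{(T-r)^\beta}{\Gamma(\beta+1)}|u(r)|\,{\rm d}r,
\end{equation*}
which is finite since $u\in L^1(0,T]$. For continuity at a point $x_0\in(0,T]$, the substitution $r = xs$ recasts $J^\beta_0 u$ as $\frac{x^\beta}{\Gamma(\beta)}\int_0^1 (1-s)^{\beta-1} u(xs)\,{\rm d}s$, and dominated convergence (dominating $|u(xs)|$ locally near $x_0$ using continuity of $u$ away from $0$) yields the desired continuity.

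For (ii), the heart of the matter is the semigroup identity $J^{1-\beta}_0 J^\beta_0 u = J^1_0 u$, which I would prove by Fubini: after swapping order of integration, the inner integral evaluates via the Beta integral $\int_0^1 (1-t)^{-\beta} t^{\beta-1}\,{\rm d}t = \Gamma(\beta)\Gamma(1-\beta)$, giving $J^{1-\beta}_0 J^\beta_0 u(x) = \int_0^x u(r)\,{\rm d}r = J^1_0 u(x)$. Since $u\in C(0,T]$, the right-hand side lies in $C^1(0,T]$ with derivative $u$, so $J^\beta_0 u\in C_\beta(0,T]$ and $\partial^\beta_x J^\beta_0 u = u$. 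For (iii), the forward implication is (ii), noting also that $|J^{1-\beta}_0 J^\beta_0 g(x)| = |J^1_0 g(x)|\to 0$ as $x\to 0$. For the converse, integrate $\frac{\rm d}{{\rm d}x} J^{1-\beta}_0 u = g$ from $0$ to $x$; the assumption $J^{1-\beta}_0u\in C^1(0,T]$ together with the vanishing initial limit gives $J^{1-\beta}_0 u = J^1_0 g$. Applying $J^\beta_0$ and invoking the identity from (ii), along with the commutation $J^\beta_0 J^1_0 = J^1_0 J^\beta_0$ (again a Fubini computation), yields $J^1_0 u = J^1_0 J^\beta_0 g$; differentiating both $C^1$ sides concludes $u = J^\beta_0 g$.

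Finally, for (iv), the additional hypothesis $u\in C[0,T]$ yields the uniform bound
\begin{equation*}
|J^{1-\beta}_0 u(x)|\le \|u\|_{C[0,T]}\,\frac{x^{1-\beta}}{\Gamma(2-\beta)}\longrightarrow 0\quad\text{as }x\to 0,
\end{equation*}
so the hypothesis of (iii) is met with $g\equiv 0$, forcing $u \equiv 0$. The main technical obstacle is justifying the semigroup identity $J^{1-\beta}_0 J^\beta_0 u = J^1_0 u$ under the low regularity $u\in C\cap L^1(0,T]$: one must verify absolute integrability of the iterated kernel so that Tonelli-Fubini applies, which reduces to finiteness of $B(\beta, 1-\beta)$. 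Everything else is routine bookkeeping around this identity and around passing to the limit $x\to 0$ in the various integral expressions.
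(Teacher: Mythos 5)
Your parts (ii)--(iv) are correct and essentially match the paper: the Fubini/Beta-integral identity $J^{1-\beta}_0J^\beta_0u=\int_0^xu(r)\,{\rm d}r$, the vanishing of this quantity at $0$ for the forward implication of (iii), and the uniform bound $|J^{1-\beta}_0u(x)|\le\|u\|_{C[0,T]}x^{1-\beta}/\Gamma(2-\beta)$ in (iv) are exactly the paper's steps, and the $L^1$ bound in (i) is the same Fubini computation. In the converse of (iii) you take a slightly different but valid route: you integrate $\frac{\rm d}{{\rm d}x}J^{1-\beta}_0u=g$ from $0$ using the vanishing limit to get $J^{1-\beta}_0u=J^1_0g$, then apply $J^\beta_0$, commute $J^\beta_0J^1_0=J^1_0J^\beta_0$, and differentiate; the paper instead notes that $J^{1-\beta}_0[u-J^\beta_0g]$ is a constant which must be zero and applies $\partial_x^{1-\beta}J^{1-\beta}_0$. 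Both work; yours costs one extra Fubini but is equally elementary.

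The one genuine gap is the continuity claim in (i). After the substitution $r=xs$ you invoke dominated convergence, ``dominating $|u(xs)|$ locally near $x_0$ using continuity of $u$ away from $0$''. Continuity of $u$ away from $0$ only controls the integrand for $s$ bounded away from $0$; for $s$ near $0$ the argument $xs$ approaches the origin, where $u\in C\cap L^1(0,T]$ may be unbounded, and in general no integrable dominating function valid for all $x$ near $x_0$ exists: $s\mapsto\sup_{x\in[x_0-\delta,\,x_0+\delta]}|u(xs)|$ need not be in $L^1(0,1)$ (take $u$ with triangular spikes at $2^{-n}$ of height $4^n$ and base width $8^{-n}$; then $u\in C\cap L^1(0,T]$, but the sup picks up each spike on an $s$-interval of length comparable to $2^{-n}$, so its integral is bounded below by $c\sum_n 4^n2^{-n}=\infty$). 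So plain DCT, as stated, does not close the argument. The fix is routine: split off $s\in(0,\epsilon)$, where
\[
\int_0^\epsilon (1-s)^{\beta-1}\big|u(xs)\big|\,{\rm d}s\le (1-\epsilon)^{\beta-1}\,\frac1x\int_0^{x\epsilon}\big|u(y)\big|\,{\rm d}y,
\]
which is uniformly small for $x$ near $x_0$ by absolute continuity of the integral of $|u|$, and apply dominated convergence only on $[\epsilon,1]$ (equivalently, use Vitali's theorem with the uniform integrability this estimate provides). Note that the paper's truncation $J^\beta_\delta$ near $r=x$ sidesteps the issue entirely: after truncation the kernel is bounded by $\delta^{\beta-1}$, so $|u|$ itself is the dominating function, and the $x$-dependence stays in the (bounded) kernel rather than in the argument of $u$.
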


\begin{remark}
Note that in Lemma \ref{lem:prelim}-(iv), the condition $u\in C_\beta[0,T]$ cannot be weakened to $u\in C_\beta(0,T]$, since $\DRL\beta x^{\beta-1}$ is also 0.
\end{remark}

\begin{remark}
\label{rmk: Holder regularity of J0beta g}
For $g\in L^1(0,T]$ satisfying $\big\vert g(x)\big\vert\le Mx^{\alpha-\beta}$ for some $\alpha,\,M\ge0$ and all $x\in(0,T]$, the H\"older regularity of $J_0^\beta g$ is summarized as follows. See Lemmata \ref{beta Holder regularity of J0beta singular g} and \ref{alpha Holder regularity of J0beta singular g} for the proofs.
\begin{enumerate}[(i)]
\item For any $T_1\in(0,T)$, $J_0^\beta g \in C^{0,\beta}[T_1,T]$ with a H\"older constant  $2M\max\{T_1^{\alpha-\beta}\!,\,T^{\alpha-\beta}\}/$ $\Gamma(1+\beta)$.
\item If $ 0<\alpha<\beta$, then $J_0^\beta g\in C^{0,\alpha}[0,T]$ with a H\"older constant $2M\Gamma(\alpha\hspace{-0.7pt}+\hspace{-0.7pt}1\hspace{-0.7pt}-\hspace{-0.7pt}\beta)/\Gamma(1\hspace{-0.7pt}+\hspace{-0.7pt}\alpha)$.
\end{enumerate}
\end{remark}

\subsection{Censored fractional derivative}
We now define our censored fractional derivative.
\begin{definition}\label{def:cfd}
Given $\beta\in(0,1)$, we define the \emph{censored fractional derivative} of any $u\in C_\beta(0,T]$ as
\begin{equation*}
\DCs\beta u(x)=\DRL\beta u(x)-\frac{x^{-\beta}}{\Gamma(1\hspace{-0.7pt}-\hspace{-0.7pt}\beta)}u(x), \quad   \text{for all } x\in(0,T].
\end{equation*}
\end{definition}

\begin{remark}\label{rmk:oncfd}$ $
\begin{enumerate}[(i)]
\item Like the Caputo derivative, the censored fractional derivative maps constants to 0, and  satisfies the scaling property
\begin{equation*}
\DCs\beta v(x)=c^{-\beta}\DCs\beta u(x/c),
\label{scaling property}
\end{equation*}
where  $u\in C_\beta(0,T]$, $c$ is a positive constant and $v(x) := u(x/c)\in C_\beta(0,\,cT]$.

\item For functions of the form $x^\alpha\,(\alpha > 0)$, the censored fractional derivative equals the R–L derivative up to a constant multiple: $\DCs\beta x^\alpha =  c_{\alpha,\,\beta} \DRL\beta x^\alpha$, where
\[
 c_{\alpha,\,\beta}=1-\frac{\Gamma(\alpha+1-\beta)}{\Gamma(\alpha\hspace{-0.7pt}+\hspace{-0.7pt}1)\Gamma(1\hspace{-0.7pt}-\hspace{-0.7pt}\beta)}  ,\quad\DRL\beta x^\alpha=x^{\alpha-\beta}\frac{\Gamma(\alpha+1)}{\Gamma(\alpha\hspace{-0.7pt}+\hspace{-0.7pt}1\hspace{-0.7pt}-\hspace{-0.7pt}\beta)}. 
\]
 By \eqref{inequality for gamma functions},   $ c_{\alpha,\,\beta}$  is in $(0,1)$.
In particular, for $\alpha=\beta$, we have  $\DCs\beta x^\alpha = \Gamma(\beta\hspace{-0.7pt}+\hspace{-0.7pt}1)\big(\beta\pi\hspace{-0.7pt}-\hspace{-0.7pt}\sin(\beta\pi)\big)/(\beta\pi)$.  While we can talk about the semigroup property for $\DRL\beta$ and the Caputo derivative \cite[Theorem 2.13 and Lemma 3.13]{kai}, 
we cannot for $\DCs\beta$. For instance,
\begin{equation*}
\begin{aligned}
&\DCs\beta \DCs\gamma x^\alpha=c_{\alpha-\gamma,\,\beta}\,c_{\alpha,\,\gamma}\,\DRL{\beta+\gamma}x^\alpha,\\
&\DCs\gamma \DCs\beta x^\alpha=c_{\alpha-\beta,\,\gamma}\,c_{\alpha,\,\beta}\,\DRL{\beta+\gamma}x^\alpha,
\end{aligned}
\end{equation*}
however $c_{\alpha-\gamma,\,\beta}\,c_{\alpha,\,\gamma}\neq c_{\alpha-\beta,\,\gamma}\,c_{\alpha,\,\beta}$ unless $\beta=\gamma$.

\item If $u\in  C^1(0,T]\cap L^1(0,T]$, then  on $(0,T]$, $\DCs\beta u$ allows the    representation \eqref{r^-1-beta representation of D_x^beta},
 from  which  it is clear that $-\DCs\beta$ satisfies the positive maximum principle \cite{BSW13}, and hence it is dissipative in the sense that $\|\lambda u+\DCs\beta u\|_{C[0,T]}\ge \lambda \|u\|_{C[0,T]}$ for any $\lambda>0$ and $u\in C^1[0,T]$. 

\item The Laplace transform of the censored fractional derivative can be written as
\begin{equation*}
\mathcal L\big[\DCs\beta u\big](k)=k^\beta \bigg(\mathcal L[ u](k)-k^{-1} \mathcal L\bigg[\frac{u(x/k)x^{-\beta}}{\Gamma(1-\beta)}\bigg](1)\bigg),\quad k>0,
\end{equation*}
which differs from $k^\beta\big(\mathcal L[u](k)-k^{-1} u(0)\big)$, the Laplace transform of the Caputo derivative \cite[Chapter 2.4]{Meerschaert2012}. 
One can notice that even in Laplace space, it is unclear if the initial conditions can be imposed on the problem $\DCs\beta u= g$.


\end{enumerate}
\end{remark}

\begin{remark} $ $
\label{strong solution space}
We will spend the next few pages  establishing the well-posedness of    $\DCs\beta u= g$ with $u(0)=u_0$,   for certain $g$. With the initial condition imposed, $C_\beta[0,T]$, which equals $\big\{u\in C[0,T]:J^{1-\beta}_0u\in C^1(0,T]\big\}$, now becomes a natural function space for solutions. A large part of the Caputo literature (e.g., \cite{kai}), however, chose $J^\beta_0\big[C\cap L^1(0,T]\big]$, i.e., the image of $J^\beta_0$ over $C\cap L^1(0,T]$, as the solution space. This difference seems not to matter, at least to our studies. Indeed, the set $U$ consisting of the solutions to \eqref{eq:main_intro} (for those $g$ of interest) is contained in the intersection of those two spaces, as shown in the diagram below (see Appendix \ref{proof of relations between sets} for the proof of the diagram)

\hspace{50pt}
\begin{tikzpicture}
\fill[gray!15]  (-0.05, -0.12) ellipse    (2.1 and 0.45);

\fill[gray!15]  ( 2.8, -0.12) ellipse    (2.8 and 0.45);

\fill[gray!35]  (1.15, -0.12) ellipse  (0.4 and 0.22);

\draw           (-3.7,-0.68) rectangle +(9.5,1.11) node at (-2.9,0.1) {$C_\beta(0,T]$};

\draw           (-0.05, -0.12) ellipse    (2.1 and 0.45) node at (-0.86,-0.12) {$C_\beta[0,T]$};

\draw           ( 2.8, -0.12) ellipse    (2.8 and 0.45) node at (3.5,-0.09) {$J^\beta_0\big[C\cap L^1(0,T]\big]$};

\draw           (1.15, -0.12) ellipse  (0.4 and 0.22) node at (1.17,-0.12) {$U$};
\end{tikzpicture}\\
where we define $U=\big\{u\in C_\beta[0,T]:  x^{\beta-\alpha}\DCs\beta u \in C[0,T] \text{ for some }\alpha>0 \big\}$. Lastly, let us mention that $J^\beta_0 C[0,T]=\big\{u\in C[0,T]:J^{1-\beta}_0u\in C^1[0,T]\big\}$  \cite[Proposition 4.1]{vainikko2016functions}.

\end{remark}

\subsection{An integral operator and related kernels}
As we can see from \eqref{eq:serintro}, the solution to the IVP \eqref{eq:main_intro} may be seen as 
a variation of the R–L integral. In this subsection we introduce an integral operator and related kernels  for the convergence study of \eqref{eq:serintro}. This leads to Lemma \ref{thm:kseries}, which is a crucial  bound   in this work. The   probabilistic   interpretation  of  the kernels under consideration  will be presented in Section \ref{sec:hit_0}.

\begin{definition}\label{def:solker}
For $0<r<x$, we define the following kernels recursively
\begin{equation}\label{eq:kern}
k_j(x,r)=
\left\{
\begin{aligned}
&\frac{(x\hspace{-0.7pt}-\hspace{-0.7pt}r)^{\beta-1}r^{-\beta}}{\Gamma(\beta)\Gamma(1\hspace{-0.7pt}-\hspace{-0.7pt}\beta)}, & j=1,\\
&\int_r^x k_1(x,s)k_{j-1}(s,r)\,{\rm d}s, &j\ge 2.
\end{aligned}
\right.
\end{equation}
\end{definition}
\begin{remark}\label{rmk:intto1}
Note that for each $x>0$, $k_1(x,\,\cdot\,)$ is a beta distribution on $(0,x)$ with parameters $(1\hspace{-0.7pt}-\hspace{-0.7pt}\beta,\beta)$, and straightforward induction arguments can be used to prove that
$$\int_0^xk_j(x,r)\,{\rm d}r=1\quad (j\ge1,\;x>0)$$
 and 
 $$k_j(x,r)= \int_r^x k_{j-1}(x,s)k_1(s,r)\,{\rm d}s\quad (j\ge 2,\;x>r>0).$$
\end{remark}
\begin{definition}
\label{definition of K}
For $\psi\in C[0,T]$, we define  
\begin{equation*}
\mathcal K\psi(x)=
\left\{
\begin{aligned}
&\int_0^xk_1(x,r)\psi(r)\,{\rm d}r,&x>0,\\
&\psi(0),&x=0,
\end{aligned}
\right.
\end{equation*}
 where the explicit dependence of $\mathcal K$ on $\beta$ is suppressed to ease notation.
\end{definition}

\begin{remark}\label{K = Jbeta x^-beta}
It is easy to see that $\mathcal K\psi(x)=J^\beta_0\big[ x^{-\beta}\psi(x) /\Gamma(1\hspace{-0.7pt}-\hspace{-0.7pt}\beta)\big]$  for $\psi\in C[0,T]$  and $x\in(0,T]$, and that $\mathcal K$ is a linear operator preserving positivity ($\mathcal K\psi\ge 0$ if $\psi\ge 0$).
\end{remark}

\begin{lemma}
\label{continuity and boundness of Kpsi}
 For any $\alpha \geq 0$, we have
\begin{equation*}
\mathcal Kx^\alpha =  x^\alpha\hspace{0.7pt}\Gamma(\alpha\hspace{-0.7pt}+\hspace{-0.7pt}1\hspace{-0.7pt}-\hspace{-0.7pt}\beta)\big/\big(\Gamma(1\hspace{-0.7pt}+\hspace{-0.7pt}\alpha)\Gamma(1\hspace{-0.7pt}-\hspace{-0.7pt}\beta)\big).
\end{equation*}
If $\psi\in C[0,T]$ satisfies $\big\vert\psi(x)\big\vert\le Mx^\alpha$  for some $M>0$ and all $x\in(0,T]$, then $\mathcal K\psi\in C[0,T]$, and $\big\vert\mathcal K\psi(x)\big\vert\le M\mathcal Kx^\alpha$ for all $x\in(0,T]$.
\end{lemma}

\begin{proof}
The first claim is immediate from the definition of $\mathcal K$, and by the assumption on $\psi$, we have $\big\vert\mathcal K\psi(x)\big\vert\le\mathcal K\vert\psi\vert(x)\le M\mathcal Kx^\alpha$.
 We now prove that $\mathcal K\psi$ is continuous on $(0,T]$. For $\varepsilon\in(0,\,x/2)$, define
\begin{equation*}
\mathcal K_\varepsilon\psi(x)=\int_\varepsilon^{x-\varepsilon}k_1(x,r)\psi(r)\,{\rm d}r.
\end{equation*}
Given $T_1\in(0,T],$ for every $x\in[T_1,\,T]$ and $\varepsilon\in(0,T_1/2)$, we have
\begin{equation*}
\begin{aligned}
\big\vert\mathcal K_\varepsilon\psi(x)-\mathcal K\psi(x)\big\vert
&\le\int_0^\varepsilon k_1(x,r)\big\vert\psi(r)\big\vert\,{\rm d}r+\int_{x-\varepsilon}^xk_1(x,r)\big\vert\psi(r)\big\vert\,{\rm d}r\\
&\le\frac{\beta(x/\varepsilon-1)^{\beta-1}+(1\hspace{-0.7pt}-\hspace{-0.7pt}\beta)(x/\varepsilon-1)^{-\beta}}{\beta(1-\beta)\Gamma(\beta)\Gamma(1-\beta)}\|\psi\|_{C[0,T]}\\
&\le\frac{\beta(T_1/\varepsilon-1)^{\beta-1}+(1\hspace{-0.7pt}-\hspace{-0.7pt}\beta)(T_1/\varepsilon-1)^{-\beta}}{\beta(1-\beta)\Gamma(\beta)\Gamma(1-\beta)}\|\psi\|_{C[0,T]},
\end{aligned}
\end{equation*}
therefore, as $\varepsilon\rightarrow0,\;\mathcal K_\varepsilon\psi\to\mathcal K\psi$ uniformly on $[T_1,\,T]$. Because $\mathcal K_\varepsilon\psi$ is continuous on $[T_1,\,T]$,  $\mathcal K\psi$   must be continuous on $[T_1,\,T]$, and thus on $(0,T]$. In addition, by the continuity of $\psi$ at $x=0,\;\mathcal K\psi(x)\rightarrow\psi(0)$ as $x\rightarrow0$, and therefore $\mathcal K\psi\in C[0,T]$.
\end{proof}

We can now obtain the crucial bound that  will help us  adapt  \cite[Theorem 7.10]{kai} to the censored IVP \eqref{eq:main_intro} in order to express the solution as a series.  
\begin{lemma}\label{thm:kseries}
 
For any $\alpha>0$, we have
\begin{equation}\label{eq:xalphaidentity}
\sum_{j=1}^\infty\mathcal K^jx^\alpha=x^\alpha\left(\frac{\Gamma(1\hspace{-0.7pt}+\hspace{-0.7pt}\alpha)\Gamma(1\hspace{-0.7pt}-\hspace{-0.7pt}\beta)}{\Gamma(\alpha+1-\beta)}-1\right)^{-1}.
\end{equation}
If $\psi\in C[0,T]$ satisfies $\big\vert\psi(x)\big\vert\le Mx^\alpha $  for some $M>0$ and all $x\in(0,T]$, then  $$\sum_{j=1}^\infty\mathcal K^j\psi\in C[0,T], \text{ and } \Bigg\vert\sum_{j=1}^\infty\mathcal K^j\psi(x)\Bigg\vert\le M\sum_{j=1}^\infty\mathcal K^jx^\alpha \text{  for all $x\in(0,T]$}.
$$
In addition, $\mathcal K^j\psi(x)=\int_0^xk_j(x,r)\psi(r)\,{\rm d}r$ for all $j\in\mathbb N,\,x\in(0,T]$.
 
\end{lemma}
\begin{proof}   
 We first confirm \eqref{inequality for gamma functions} using the fact that  $t^\alpha$ and $(1\hspace{-0.7pt}-\hspace{-0.7pt}t)^{-\beta}$  strictly increase, so that
\begin{equation*}
\frac1{\alpha\hspace{-0.7pt}-\hspace{-0.7pt}\beta\hspace{-0.7pt}+\hspace{-0.7pt}1}=\int_0^1(1\hspace{-0.7pt}-\hspace{-0.7pt}t)^\alpha(1\hspace{-0.7pt}-\hspace{-0.7pt}t)^{-\beta}\,{\rm d}t<\int_0^1t^\alpha(1\hspace{-0.7pt}-\hspace{-0.7pt}t)^{-\beta}\,{\rm d}t=\frac{\Gamma(1\hspace{-0.7pt}+\hspace{-0.7pt}\alpha)\Gamma(1\hspace{-0.7pt}-\hspace{-0.7pt}\beta)}{\Gamma(1\hspace{-0.7pt}+\hspace{-0.7pt}\alpha\hspace{-0.7pt}+\hspace{-0.7pt}1\hspace{-0.7pt}-\hspace{-0.7pt}\beta)}.
\end{equation*}
Applying Lemma \ref{continuity and boundness of Kpsi} for $j$ times, we get $\mathcal K^jx^\alpha=x^\alpha\big(\Gamma(1\hspace{-0.7pt}+\hspace{-0.7pt}\alpha)\Gamma(1\hspace{-0.7pt}-\hspace{-0.7pt}\beta)/\Gamma(\alpha\hspace{-0.7pt}+\hspace{-0.7pt}1\hspace{-0.7pt}-\hspace{-0.7pt}\beta)\big)^{-j}$. Then, by summing over $j$, we obtain \eqref{eq:xalphaidentity} from \eqref{inequality for gamma functions}. Meanwhile, we have $\big\vert\mathcal K^j\psi(x)\big\vert\le M\mathcal K^jx^\alpha$ and $\mathcal K^j\psi\in C[0,T]$, so $\sum_{j=1}^\infty\mathcal K^j\psi$ converges uniformly to a limit in $C[0,T]$, whose absolute value is pointwise bounded by $M\sum_{j=1}^\infty\mathcal K^jx^\alpha$. Finally, by induction,
\begin{equation*}
\begin{aligned}
\mathcal K^j\psi(x)=\mathcal K\mathcal K^{j-1}\psi(x)&=\int_0^xk_1(x,r)\int_0^rk_{j-1}(r,s)\psi(s)\,{\rm d}s\,{\rm d}r\\
&=\int_0^x\int_s^xk_1(x,r)k_{j-1}(r,s)\psi(s)\,{\rm d}r\,{\rm d}s\\
&=\int_0^xk_j(x,s)\psi(s)\,{\rm d}s.
\end{aligned}
\end{equation*}

\end{proof}

\begin{remark}\label{rem:sumk_js}
 
In Lemma \ref{thm:kseries}, we require $\alpha>0$ (though the last statement there holds for all $\alpha\ge0$), in fact, if $\alpha=0$, let $\psi=1$, then \begin{equation*}
\sum_{j=1}^\infty\mathcal K^j\psi(x)=\sum_{j=1}^\infty\int_0^xk_j(x,r)\,{\rm d}r=\infty.
\end{equation*}
\end{remark}

\section{Well-posedness of the censored IVPs}\label{sec:wellposedness}
\subsection{Inverse of \texorpdfstring{$\DCs\beta$}{censored derivative}}

We begin with the basic censored IVP \eqref{eq:main_intro} with $g\in C(0,T]$ and $u_0\in\mathbb R$. Our strategy is to consider the equivalent Caputo/R–L problem for $\bar u =u\hspace{-0.7pt}-\hspace{-0.7pt}u_0$ with the unbounded coefficient  $x^{-\beta}/\Gamma(1\hspace{-0.7pt}-\hspace{-0.7pt}\beta)$,
\begin{equation}\label{eq:RLIVP}
\DRL\beta\bar u(x)=\frac{x^{-\beta}}{\Gamma(1\hspace{-0.7pt}-\hspace{-0.7pt}\beta)}\bar u(x)+g(x),\quad x>0,\quad \bar u(0)=0,
\end{equation}
and then show that for certain forcing terms $g$, the formula \cite[Theorem 7.10]{kai} for  bounded coefficients still yields a solution to \eqref{eq:RLIVP}, and thus to  \eqref{eq:main_intro}.  

\begin{remark}$ $
\label{remark about the equivalent Caputo/R-L problem}
\begin{enumerate}[(i)]
	\item  We can solve \eqref{eq:RLIVP} using Picard iteration, i.e., $\bar u_{m+1}(x)=J_0^\beta\big[x^{-\beta}\bar u_m(x)/\Gamma(1\hspace{-0.7pt}-\hspace{-0.7pt}\beta)+g(x)\big]$ ($m=1,2,\cdots$) with $\bar u_1=0$. By Remark \ref{K = Jbeta x^-beta},  the limit equals $I^\beta_0g$ defined in  \eqref{eq:sol} if the iteration converges.
	
	 \item For $g\in C[0,T]$ and $u_0=0$,   \cite[Example 3.4]{LPV15} guarantees (after change of variables) that there is a unique solution in $J^\beta_0 C[0,T]$ to \eqref{eq:RLIVP} and thus to \eqref{eq:main_intro} (and also to   \eqref{linear variable in equation}, a linear IVP  we will study  later). 	However, \cite{LPV15} does not cover nonlinear IVPs \eqref{eq:ivpODE}. Moreover, it provides neither explicit expressions  nor stochastic interpretations for the solutions,  and seemingly cannot obtain the continuous dependence. Lastly, no singularity of $g$ at $x=0$ is allowed there either. Let us also  mention that, as  stated beneath \cite[Equation (5)]{LPV15}, for singular fractional differential equations one should not expect to impose initial conditions without losing regularity. But \eqref{eq:RLIVP} proves to be an exception. In fact, constant functions solve its homogeneous version because of the specific coefficient $ x^{-\beta}/\Gamma(1\hspace{-0.7pt}-\hspace{-0.7pt}\beta)$.
	
	\item If one replaces the coefficient   in the R–L problem \eqref{eq:RLIVP}  by  $C x^{-\beta}/\Gamma(1\hspace{-0.7pt}-\hspace{-0.7pt}\beta)$,   then 
 the series representation for the solution would be $\bar u=\sum_{j=0}^\infty C^j\mathcal K^jJ^\beta_0g$, which does not converge for important data (like $g=1$) if $\vert C\vert \ge\Gamma(1\hspace{-0.7pt}+\hspace{-0.7pt}\beta)\Gamma(1\hspace{-0.7pt}-\hspace{-0.7pt}\beta)$.
In other words, the Picard iteration will not converge for such $C$.  The above threshold can be obtained from the proof of Lemma \ref{thm:kseries}, and is consistent with the condition ``$b(0)<\Gamma(\alpha\hspace{-0.7pt}+\hspace{-0.7pt}1)$" in \cite[Example 3.4]{LPV15}.
\end{enumerate}
\end{remark}


We now present a key result concerning IVP \eqref{eq:main_intro}, which serves as the fundamental theorem of calculus for $\DCs\beta$. Or simply put, $I_0^\beta$ is to $\DCs\beta$ as $J_0^\beta$ is to $\DRL\beta$.

\begin{theorem}\label{thm:existence}
Let $u_0\in\mathbb R$ and $g\in C(0,T]$ such that  $\big\vert g(x)\big\vert\le M x^{\alpha-\beta}$ for some  $M,\alpha>0$ and all $x\in(0,T]$. Then there exists a unique function $u\in C_\beta[0,T]$    satisfying \eqref{eq:main_intro}, and it has the series representation
\begin{equation}
u(x)-u_0=I^\beta_0 g(x):=\sum_{j=0}^\infty\mathcal K^jJ^\beta_0g(x),
\label{eq:sol}
\end{equation}
where $\mathcal K^0$ is the identity operator by convention.  Moreover, $u$ depends on $u_0$ and $g$ continuously in the sense of Remark \ref{meaning of continuous dependence}.
\end{theorem}
Theorem \ref{thm:existence} is an immediate consequence of Lemmata \ref{lem:uniq} and \ref{lem:sol_cts}.
 
\begin{remark}
\label{equivalent representations}
For $g$ satisfying the conditions in Theorem \ref{thm:existence}, $I^\beta_0 g$ can  be equivalently represented as  
\begin{align}
I^\beta_0 g(x)&=J^\beta_0g(x)+\sum_{j=1}^\infty\int_0^xk_j(x,r)J^\beta_0g(r)\,{\rm d}r,\label{alter representation 1}\\
&=\sum_{j=1}^\infty\mathcal K^j\big[\Gamma(1\!-\!\beta)x^\beta g(x)\big]\label{alter representation 2}\\
&=\sum_{j=0}^\infty J_0^\beta\bigg[\frac{\mathcal K^j\big[x^\beta g(x)\big]}{x^\beta}\bigg],\label{alter representation 3}
\end{align}
where \eqref{alter representation 1} is due to Lemma \ref{thm:kseries}, while \eqref{alter representation 2} and \eqref{alter representation 3} are due to Remark \ref{K = Jbeta x^-beta}. From any representation, we can see that $I^\beta_0$ is a linear operator preserving positivity ($I^\beta_0 g\ge 0$ if $g\ge 0$).

\end{remark}
 
\begin{remark}
\label{meaning of continuous dependence}
 For $g\in C[0,T]$, we can prove the continuous dependence by showing that $\Vert u\hspace{-0.7pt}-\hspace{-0.7pt}u_0\Vert_{C[0,T]}\le C\Vert g\Vert_{C[0,T]}$ for some $C$ dependent only on $\beta$ and $T$. For a more general $g$ which may diverge at $x=0$, $C$ will depend also on $\alpha$, and $\Vert g\Vert_{C[0,T]}$   needs to be replaced by $\Vert g\Vert_{G^{\text{\fontsize{8pt}{0pt}$\alpha\hspace{-2pt}-\hspace{-3pt}\beta$}}(0,T]}$, where we define for any $\gamma\in\mathbb R$ a Banach space $G^{\gamma}(0,T]= \big\{h\in C(0,T]:\Vert h\Vert_{G^{\text{\fontsize{8pt}{0pt}$\gamma$}}(0,T]}<\infty\big\}$, with the norm $\Vert h\Vert_{G^{\text{\fontsize{8pt}{0pt}$\gamma$}}(0,T]}:=\sup\big\{\vert x^{-\gamma}h(x) \vert:x\in(0,T]\big\}$.    In particular, if $g\in C[0,T]$ and $\alpha=\beta$, then $\Vert g\Vert_{G^{\text{\fontsize{8pt}{0pt}$\alpha\hspace{-2pt}-\hspace{-3pt}\beta$}}(0,T]}=\Vert g\Vert_{C[0,T]}$. (Note that $G^{\gamma}$ is the same as $\hat B$ defined in \cite[Proof of Lemma 5.3]{kai}.)
 
\end{remark}

\begin{lemma}\label{lem:uniq}
Solutions to problem \eqref{eq:main_intro} are unique in $C_\beta[0,T]$.
\end{lemma}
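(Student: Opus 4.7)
The plan is as follows. Suppose $u_1, u_2 \in C_\beta[0,T]$ are both strong solutions to \eqref{eq:main}, set $w := u_1 - u_2$, and aim to show $w \equiv 0$. By linearity of $D^\beta_x$ together with Definition \ref{def:cfd}, $w \in C_\beta[0,T]$, $w(0)=0$, and $\partial_x^\beta w(x) = \tfrac{x^{-\beta}}{\Gamma(1-\beta)} w(x)$ on $(0,T]$. The strategy is to upgrade this differential identity to the fixed-point identity $w = \mathcal{K} w$, iterate, and exploit the scaling structure of $k_j$ to drive $w$ to zero.

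For the first step, set $g_w(x) := \tfrac{x^{-\beta}}{\Gamma(1-\beta)} w(x)$. Since $w \in C[0,T]$, the function $g_w$ is continuous on $(0,T]$ and dominated by $\|w\|_{C[0,T]}\,x^{-\beta}/\Gamma(1-\beta)$, which is in $L^1(0,T]$ because $\beta<1$; hence $g_w \in C\cap L^1(0,T]$. Also, boundedness of $w$ gives $J^{1-\beta}_0 w(x) \to 0$ as $x \to 0$. Therefore Lemma \ref{lem:prelim}-(iii) applies to $\partial^\beta_x w = g_w$ and yields $w = J^\beta_0 g_w$, which by Remark \ref{K = Jbeta x^-beta} equals $\mathcal{K} w$. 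Iterating, $w = \mathcal{K}^j w$ for every $j \in \mathbb{N}$, and Lemma \ref{eqn for Kj} rewrites this as $w(x) = \int_0^x k_j(x,r)\, w(r)\,{\rm d}r$ for $x \in (0,T]$.

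Next I would exploit scale invariance: directly from Definition \ref{def:solker} one checks $k_1(cx,cr) = c^{-1} k_1(x,r)$, and induction on $j$ in \eqref{eq:kern} propagates this to $k_j(cx,cr) = c^{-1} k_j(x,r)$. The substitution $r = xs$ then gives $w(x) = \int_0^1 k_j(1,s)\, w(xs)\,{\rm d}s$. By Lemma \ref{lem:identx} with $x=1$, for each fixed $\alpha>0$, $\int_0^1 s^\alpha k_j(1,s)\,{\rm d}s = \rho_\alpha^j$ with $\rho_\alpha := \Gamma(\alpha+1-\beta)/(\Gamma(1+\alpha)\Gamma(1-\beta)) < 1$, where the inequality is established inside the proof of Theorem \ref{thm:kseries}. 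Markov's inequality then yields $\int_\eta^1 k_j(1,s)\,{\rm d}s \leq \eta^{-\alpha}\rho_\alpha^j \to 0$ as $j \to \infty$ for every $\eta \in (0,1)$, so the unit mass of $k_j(1,\cdot)$ concentrates near $0$.

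To conclude, I fix $x \in (0,T]$ and $\varepsilon>0$, and use continuity of $w$ at $0$ together with $w(0)=0$ to pick $\delta \in (0,x)$ with $|w(r)|<\varepsilon$ for $r\le\delta$. Splitting the integral at $s = \delta/x$ and bounding $|w|$ by $\|w\|_{C[0,T]}$ on the outer part gives $|w(x)| \le \varepsilon + \|w\|_{C[0,T]} (\delta/x)^{-\alpha}\rho_\alpha^j$; sending $j\to\infty$ and then $\varepsilon\to0$ forces $w(x)=0$. I expect the main obstacle to be the first step, namely verifying the hypotheses of Lemma \ref{lem:prelim}-(iii) for the potentially singular right-hand side $g_w$; once the integral equation $w = \mathcal{K} w$ is in hand, the remainder is a clean moment-concentration argument driven by the scaling of $k_j$ and Lemma \ref{lem:identx}.
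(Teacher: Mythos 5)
Your argument is correct, and it diverges from the paper's proof in its second half. The first step is essentially the paper's: both reduce to $w=\mathcal K w$, though you get there slightly more directly via the left arrow of Lemma \ref{lem:prelim}-(iii) (checking $g_w\in C\cap L^1(0,T]$ and $J^{1-\beta}_0w(x)\to0$ by boundedness of $w$, exactly the estimate used in the proof of Lemma \ref{lem:prelim}-(iv)) together with Remark \ref{K = Jbeta x^-beta}, whereas the paper applies $\partial^\beta_xJ^\beta_0$ to the right-hand side and invokes Lemma \ref{lem:prelim}-(ii) and (iv). From $w=\mathcal Kw$ the paper finishes with a short maximum-principle argument: at a maximizer $\xi$ of $|w|$ on $[0,T]$, the unit mass of $k_1(\xi,\cdot)$ from \eqref{eq:integrateto1} and the constant sign of $w(\xi)-w(r)$ force $w\equiv w(\xi)=w(0)=0$. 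You instead iterate to $w=\mathcal K^jw$ and run a concentration-of-mass argument: the scaling $k_j(cx,cr)=c^{-1}k_j(x,r)$ (which you verify correctly by induction on \eqref{eq:kern}), the moment identity of Lemma \ref{lem:identx} with $\rho_\alpha<1$ from the proof of Theorem \ref{thm:kseries}, and Markov's inequality show that $k_j(1,\cdot)$ concentrates at $0$ as $j\to\infty$, where $w$ vanishes by continuity. Your route is a bit longer and needs the full kernel machinery (Lemma \ref{eqn for Kj}, Lemma \ref{lem:identx}), but it is equally rigorous and has two virtues: it makes transparent the probabilistic content ($k_j(1,\cdot)$ is the law of $\prod_{i=1}^jX^{(\beta)}_i$, which tends to $0$, mirroring the Chebyshev argument in Lemma \ref{lem:1}), and it is structurally closer to the iteration later used for the nonlinear uniqueness in Lemma \ref{uniqueness of ivpODE}, so it adapts more readily to perturbed right-hand sides; the paper's argmax argument buys brevity and avoids any moment computation.
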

\begin{proof}
Let $u_1,u_2\in C_\beta[0,T]$ be two solutions to problem \eqref{eq:main_intro}. By linearity of $\DCs\beta $, $u:= u_1\hspace{-0.7pt}-\hspace{-0.7pt}u_2\in C_\beta[0,T]$ satisfies $\DCs\beta u=0$ on $(0,T]$. Therefore for every $x\in(0,T]$, $\DRL\beta u(x)=\Gamma(1\hspace{-0.7pt}-\hspace{-0.7pt}\beta)^{-1}x^{-\beta}u(x)$,
where the right-hand side is in $C\cap L^1(0,T]$.  Using Lemma \ref{lem:prelim}-(ii) as well as Remark \ref{K = Jbeta x^-beta}, we obtain
\begin{equation*}
\DRL\beta u(x)=\frac{x^{-\beta}}{\Gamma(1\hspace{-0.7pt}-\hspace{-0.7pt}\beta)}u(x)=\DRL\beta J^\beta_0\bigg[\frac{x^{-\beta}}{\Gamma(1\hspace{-0.7pt}-\hspace{-0.7pt}\beta)}u(x)\bigg]=\DRL\beta\mathcal Ku(x),
\end{equation*}
where $\mathcal Ku\in C_\beta(0,T]$. By Lemma \ref{continuity and boundness of Kpsi}, $\mathcal Ku$ is in $C[0,T]$, and so is $u\hspace{-0.7pt}-\hspace{-0.7pt}\mathcal Ku$.  Consequently, $u\hspace{-0.7pt}-\hspace{-0.7pt}\mathcal Ku\in C_\beta[0,T]$. By the linearity of $\DRL\beta$, we know $\DRL\beta\big[u\hspace{-0.7pt}-\hspace{-0.7pt}\mathcal Ku\big]$=0. According to Lemma \ref{lem:prelim}-(iv), we obtain $u=\mathcal Ku$.

Let $\xi\in\arg\max_{\,r\in[0,T]}\big\vert u(r)\big\vert$. If $\xi=0$, then $u=0$ on $[0,T]$ because $u(0)=0$. If $\xi>0$, using the fact that $u(\xi)=\mathcal Ku(\xi)$, we have $
\int_0^\xi k_1(\xi,r)\big(u(\xi)\hspace{-0.7pt}-\hspace{-0.7pt}u(r)\big)\,{\rm d}r=0$, where $u(\xi)\hspace{-0.7pt}-\hspace{-0.7pt}u(r)$ never changes sign for all $r\in[0,\,\xi]$, according to the definition of $\xi$. So $u(\xi)=u(r)$ for all $r\in[0,\,\xi]$, therefore $u(\xi)=u(0)=0$, and we still obtain $u=0$ on $[0,T]$. This proves $u_1= u_2$, and we are done.
\end{proof}  

\begin{lemma}\label{lem:sol_cts}
For $g$ satisfying the conditions in Theorem \ref{thm:existence}, $I^\beta_0 g$ is in $C_\beta[0,T]$  with $I^\beta_0 g(0)=0$ and $\DCs\beta I^\beta_0 g=g$. In addition, $I^\beta_0 g$ depends on $g$ continuously in the sense of Remark \ref{meaning of continuous dependence}.
\end{lemma}

\begin{proof}  Using representation \eqref{alter representation 2}, we can see $I_0^\beta g(0)=0$ from the assumptions on $g$ and Definition \ref{definition of K}, then from Lemma \ref{thm:kseries} we obtain  for all $x\in(0,T]$,
\begin{equation}
\label{bound for I^beta_0 g}
\big\vert I_0^\beta g(x)\big\vert\le I_0^\beta\vert g\vert(x)\le Mx^\alpha\left(\frac{\Gamma(1+\alpha)}{\Gamma(\alpha\hspace{-0.7pt}+\hspace{-0.7pt}1\hspace{-0.7pt}-\hspace{-0.7pt}\beta)}-\frac1{\Gamma(1\hspace{-0.7pt}-\hspace{-0.7pt}\beta)}\right)^{-1},\;\text{and}\;I_0^\beta g\in C[0,T].
\end{equation}

Note that in \eqref{alter representation 3}, the summation commutes with $J_0^\beta$, by Fubini's Theorem and the above bound. So
\begin{equation}
\begin{aligned}
I^\beta_0 g(x)=J_0^\beta\sum_{j=0}^\infty\frac{\mathcal K^j\big[x^\beta g(x)\big]}{x^\beta}&=J_0^\beta\Big[g(x)+x^{-\beta}\sum_{j=1}^\infty\mathcal K^j\big[x^\beta g(x)\big]\Big]\\
&=J_0^\beta\Big[g(x)+\frac{x^{-\beta}I^\beta_0 g(x)}{\Gamma(1-\beta)}\Big],
\end{aligned}
\end{equation}
with the last equality due to \eqref{alter representation 2}. Therefore, $I^\beta_0 g=J_0^\beta\psi$ for a $\psi\in C(0,T]$ satisfying
\begin{equation}
\label{I^beta_0 g=J_0^beta psi}
\big\vert\psi(x)\big\vert\le Mx^{\alpha-\beta}\bigg(1-\frac{\Gamma(\alpha+1-\beta)}{\Gamma(1\hspace{-0.7pt}+\hspace{-0.7pt}\alpha)\Gamma(1\hspace{-0.7pt}-\hspace{-0.7pt}\beta)}\bigg)^{-1},\quad\text{for all }\;x\in(0,T],
\end{equation}
so Lemma \ref{lem:prelim}-(ii) proves that $I^\beta_0 g$ is in $C_\beta(0,T]$ and thus $C_\beta[0,T]$. Lemma \ref{lem:prelim}-(ii) also proves that
\begin{equation*}
\DRL\beta I^\beta_0g(x) =\psi(x)= g(x)+\frac{x^{-\beta}I^\beta_0g(x)}{\Gamma(1-\beta)},\quad\;\text{for all}\;x\in(0,T],
\end{equation*}
which rewrites as $\DCs\beta I^\beta_0 g=g$ by Definition \ref{def:cfd}.

To see the continuity of $I^\beta_0$, let the $M$ in \eqref{bound for I^beta_0 g} be $\Vert g\Vert_{G^{\text{\fontsize{8pt}{0pt}$\alpha\hspace{-2pt}-\hspace{-3pt}\beta$}}(0,T]}$ ($G^\gamma(0,T]$ is defined in Remark \ref{meaning of continuous dependence}), then we obtain
\begin{equation*}
\Vert I^\beta_0 g\Vert_{G^{\text{\fontsize{8pt}{0pt}$\alpha$}}(0,T]}\le\left(\frac{\Gamma(1+\alpha)}{\Gamma(\alpha\hspace{-0.7pt}+\hspace{-0.7pt}1\hspace{-0.7pt}-\hspace{-0.7pt}\beta)}-\frac1{\Gamma(1\hspace{-0.7pt}-\hspace{-0.7pt}\beta)}\right)^{-1}\Vert g\Vert_{G^{\text{\fontsize{8pt}{0pt}$\alpha\hspace{-2pt}-\hspace{-3pt}\beta$}}(0,T]}.
\end{equation*}
Since $\alpha>0$, we have $\Vert I^\beta_0 g\Vert_{C[0,T]}\le T^\alpha\Vert I^\beta_0 g\Vert_{G^{\text{\fontsize{8pt}{0pt}$\alpha$}}(0,T]}\le  C\Vert g\Vert_{G^{\text{\fontsize{8pt}{0pt}$\alpha\hspace{-2pt}-\hspace{-3pt}\beta$}}(0,T]}$ for some $C$ dependent only on $\alpha,\,\beta$ and $T$.

\end{proof}  

\begin{example} \label{ex:harm}
Recall that for the Caputo IVP $\DRL\beta \big[u\hspace{-0.7pt}-\hspace{-0.7pt}u(0)\big] = x^\alpha\;(\alpha>-1)$ with $u(0)=u_0$, the solution  is $u_0+J^\beta_0x^\alpha$ \cite{kai}.
By \eqref{alter representation 2} and Lemma \ref{thm:kseries}, the solution  to \eqref{eq:main_intro} for $g(x)=x^\alpha \;(\alpha>-\beta)$  is
\begin{equation}\label{eq:seriesforalpha}
u(x)-u_0=I_0^\beta x^\alpha=\bigg(\frac{\Gamma(\alpha\hspace{-0.7pt}+\hspace{-0.7pt}\beta\hspace{-0.7pt}+\hspace{-0.7pt}1)}{\Gamma(\alpha+1)}-\frac1{\Gamma(1\hspace{-0.7pt}-\hspace{-0.7pt}\beta)}\bigg)^{-1}x^{\alpha+\beta}=c_{\alpha+\beta,\,\beta}^{-1}\, J^\beta_0x^\alpha,
\end{equation}
where $c_{\alpha+\beta,\,\beta}$ is defined in Remark \ref{rmk:oncfd}-(ii). In particular, when $\alpha=0$, $c_{\alpha+\beta,\,\beta}^{-1}=\beta\pi\big/\big(\beta\pi\hspace{-0.7pt} -\hspace{-0.7pt}\sin(\beta\pi)\big)$. If $\alpha\in(-1,-\beta]$, we may not be able to impose the initial 
condition in \eqref{eq:main_intro}, since the solution may explode at $0$. For example, when $\alpha=-\beta$, one can verify that a particular solution is $-\Gamma(1\hspace{-0.7pt}-\hspace{-0.7pt}\beta)\ln( x)/H_{-\beta}$,
where $H_{-\beta}$ is the Harmonic number.

\end{example}

\begin{remark}
\label{holder cts of D^-1}
Assume $g$ satisfies the conditions in Theorem \ref{thm:existence}. By \eqref{I^beta_0 g=J_0^beta psi} and Remark \ref{rmk: Holder regularity of J0beta g}, $I^\beta_0 g$ is H\"older continuous. More specifically,
\begin{enumerate}[(i)]
\item for all $T_1\in(0,T)$,  $I^\beta_0 g\in C^{0,\beta}[T_1,T]$ with a H\"older constant being
\begin{equation*}
\frac{2M\max\{T_1^{\alpha-\beta}\!,\,T^{\alpha-\beta}\} \Gamma(1+\alpha)\Gamma(1-\beta) }{\big[\Gamma(1\hspace{-0.7pt}+\hspace{-0.7pt}\alpha)\Gamma(1\hspace{-0.7pt}-\hspace{-0.7pt}\beta)-\Gamma(\alpha\hspace{-0.7pt}+\hspace{-0.7pt}1\hspace{-0.7pt}-\hspace{-0.7pt}\beta)\big]\Gamma(1\hspace{-0.7pt}+\hspace{-0.7pt}\beta)};
\end{equation*}
\item additionally assume $\alpha<\beta$, then  $I^\beta_0 g\in C^{0,\alpha}[0,T]$ with a H\"older constant being
\begin{equation*}
\frac{2M\Gamma(1-\beta)\Gamma(\alpha+1-\beta)}{\Gamma(1\hspace{-0.7pt}+\hspace{-0.7pt}\alpha)\Gamma(1\hspace{-0.7pt}-\hspace{-0.7pt}\beta)-\Gamma(\alpha\hspace{-0.7pt}+\hspace{-0.7pt}1\hspace{-0.7pt}-\hspace{-0.7pt}\beta)}.
\end{equation*}
\end{enumerate}
\end{remark}

\subsection{General censored IVPs}\label{sec:eigen}

We now  study the censored IVP with more general Lipschitz data $f$:
\begin{equation}
\left\{
\begin{aligned}
\DCs\beta u(x)&=f\big(x,u(x)\big), &&x\in(0,T],\\
u(x)&=u_0, &&x=0.
\end{aligned}\right.
\label{eq:ivpODE}
\end{equation}
 Analogously to Caputo IVPs \cite[Chapter 6]{kai} and classical ODEs, the censored IVP \eqref{eq:ivpODE} can be solved by Picard iteration, with Theorem \ref{thm:existence} acting as the fundamental theorem of calculus.

\begin{proposition}
\label{global existence of ivpODE for locally Lipschitz f}
For $f:(0,T]\times[u_0\hspace{-0.7pt}-\hspace{-0.7pt}Y,\,u_0\hspace{-0.7pt}+\hspace{-0.7pt}Y]\rightarrow\mathbb R$, where $u_0\in\mathbb R$, $Y>0$, assume that   there exist $L,\,\alpha,\,M>0$ such that for all $x\in(0,T]$ and all $y, \tilde y\in[u_0\hspace{-0.7pt}-\hspace{-0.7pt}Y,\,u_0\hspace{-0.7pt}+\hspace{-0.7pt}Y]$,
\begin{enumerate}[(i)]
\item $\;f(\,\cdot\,,y)\in C(0,T]$,
\item $\big\vert f(x,y)\big\vert\le Mx^{\alpha-\beta}$,
\item $\big\vert f(x,y) - f(x, \tilde y )\big\vert\le Lx^{\alpha-\beta}\vert y-\tilde y \vert$.
\end{enumerate}
Then there exists a unique $u\in C_\beta[0, \widetilde T ]$ solving \eqref{eq:ivpODE} on $[0, \widetilde T ]$, where either $ \widetilde T =T$, or $ \widetilde T\in(0,T)$ with $u( \widetilde T )\in\{u_0\hspace{-0.7pt}-\hspace{-0.7pt}Y,\,u_0\hspace{-0.7pt}+\hspace{-0.7pt}Y\}$.
Furthermore, $u$ depends on $u_0$ and $f$ continuously, in the sense of Lemma \ref{continuous dependence}.
\end{proposition}


 We leave the proof of Proposition \ref{global existence of ivpODE for locally Lipschitz f} at the end of this subsection. Now we are going
to prove the following lemmata. Lemma \ref{bound of the product} gives us a bound which is essential to the convergence of our Picard iteration. Lemmata \ref{uniqueness of ivpODE},  \ref{continuous dependence} and \ref{local existence of ivpODE for locally Lipschitz f} guarantee the uniqueness, continuous dependence and the local existence of the solutions, respectively.    Finally we will prove the global existence   by extending the local solution.

\begin{lemma}
\label{bound of the product}
For $x, \alpha>0$ and $N\in\mathbb N$, we have
\begin{equation*}
\big(I_0^\beta [x^{\alpha-\beta}\,\cdot\,]\big)^N1(x)\le  C  \frac{2^Nx^{N\alpha}}{(N!\alpha^N)^\beta},
\end{equation*}
where $1(x)$ is the constant function 1, $C $ is a positive constant dependent only on $\alpha$ and $\beta$, and we denote
$$\big(I_0^\beta [x^{\alpha-\beta}\,\cdot\,]\big)^Ng(x)=\underbrace{I_0^\beta\Big[x^{\alpha-\beta}\cdots I_0^\beta\big[x^{\alpha-\beta}}_{N\;\text{times}}g(x)\big]\cdots\Big].$$
\end{lemma}

\begin{proof} 
From Example \ref{ex:harm} we know that
\begin{equation}
\label{The formula for I0beta^n}
\big(I_0^\beta [x^{\alpha-\beta}\,\cdot\,]\big)^N1(x)=\prod_{n=1}^N\bigg(\frac{\Gamma(1+n\alpha)}{\Gamma(n\alpha\hspace{-0.7pt}+\hspace{-0.7pt}1\hspace{-0.7pt}-\hspace{-0.7pt}\beta)}-\frac1{\Gamma(1\hspace{-0.7pt}-\hspace{-0.7pt}\beta)}\bigg)^{-1}x^{N\alpha},
\end{equation}
where each factor is positive. Using Stirling's formula for the gamma function, i.e.
\begin{equation*}
\Gamma(z)=\sqrt{\frac{2\pi}z}\Big(\frac ze\Big)^z\bigg(1+O\Big(\frac1z\Big)\bigg),
\end{equation*}
we  have the following approximation
\begin{equation*}
\frac{\Gamma(1+n\alpha)}{\Gamma(n\alpha\hspace{-0.7pt}+\hspace{-0.7pt}1\hspace{-0.7pt}-\hspace{-0.7pt}\beta)}=(n\alpha)^\beta\bigg(1+O\Big(\frac1n\Big)\bigg),
\end{equation*}
which indicates that there exists $ \tilde n\in\mathbb N $ such that for all $n> \tilde n $,
\begin{equation*}
\frac{\Gamma(1+n\alpha)}{\Gamma(n\alpha\hspace{-0.7pt}+\hspace{-0.7pt}1\hspace{-0.7pt}-\hspace{-0.7pt}\beta)}-\frac1{\Gamma(1\hspace{-0.7pt}-\hspace{-0.7pt}\beta)}\ge\frac{(n\alpha)^\beta}2,
\end{equation*}
so there exists $C>0$ such that Lemma \ref{bound of the product} holds for all $N\in\mathbb N$.
\end{proof}

\begin{remark}
In Proposition \ref{global existence of ivpODE for locally Lipschitz f}, if $\alpha=\beta$, then we only need $(I_0^\beta)^N1(x)$ for its proof. The multiplier $x^{\alpha-\beta}$ is to accommodate more general $f$ that may diverge at $x=0$.
\end{remark}

\begin{lemma}
\label{uniqueness of ivpODE}
If $f$ satisfies the condition (iii) in Proposition \ref{global existence of ivpODE for locally Lipschitz f}, and both $u_1,\,u_2\in C_\beta[0,T]$ solve IVP \eqref{eq:ivpODE}, then $u_1=u_2$.
\end{lemma}

\begin{proof}
By the linearity of $\DCs\beta$, the difference $u:= u_1\hspace{-0.7pt}-\hspace{-0.7pt}u_2\in C_\beta[0,T]$ satisfies
\begin{equation*}
\left\{
\begin{aligned}
\DCs\beta u(x)&=f\big(x,u_1(x)\big)-f\big(x,u_2(x)\big),&&x\in(0,T],\\
\quad u(x)&=0,&&x=0.
\end{aligned}
\right.
\end{equation*}
Since $u\in C_\beta[0,T]$, we know $\DCs\beta u(x)\in C(0,T]$, so $f\big(x,u_1(x)\big)-f\big(x,u_2(x)\big)\in C(0,T]$.\\
By assumption, for all $x\in(0,T]$,
\begin{equation*}
\Big\vert f\big(x,u_1(x)\big)-f\big(x,u_2(x)\big)\Big\vert\le Lx^{\alpha-\beta}\big\vert u_1(x)-u_2(x)\big\vert=Lx^{\alpha-\beta}\big\vert u(x)\big\vert.
\end{equation*}
 Then by Theorem \ref{thm:existence}  and the positivity preserving property of $I^\beta_0$, for all $x\in(0,T]$,
\begin{equation*}
 \big\vert u(x) \big\vert \le I_0^\beta\Big\vert f\big(x,u_1(x)\big)-f\big(x,u_2(x)\big)\Big\vert\le  LI_0^\beta\Big[x^{\alpha-\beta}\big\vert u(x)\big\vert\Big].
\end{equation*}
Iterating the above inequality, we obtain for all $N\in\mathbb N,\;x\in(0,T]$,
\begin{equation*}
\big\vert u(x)\big\vert\le  L^N\big(I_0^\beta [x^{\alpha-\beta}\,\cdot\,]\big)^N\vert u\vert(x)\le  \Vert u\Vert_{C[0,T]}L^N\big(I_0^\beta [x^{\alpha-\beta}\,\cdot\,]\big)^N1(x).
\end{equation*}
By Lemma \ref{bound of the product}, as $N\rightarrow\infty$, we obtain $u(x)=0$ for all $x\in(0,T]$.
\end{proof}

We prove below the continuous dependence of $u$ on $f$. Then the continuous dependence on $u_0$ is a simple corollary if we take $\tilde f(x,y)=f(x,y+\delta)$, where $\delta$ is the  tiny   change in $u_0$. 
\begin{lemma}
\label{continuous dependence}
If $f$ and $\tilde f$ satisfy conditions (ii) and (iii) in Proposition \ref{global existence of ivpODE for locally Lipschitz f}, and $u,\,\tilde u\in C_\beta[0,T]$ satisfy $\DCs\beta u=f(x,u)$ and $\DCs\beta\tilde u=\tilde f(x,\tilde u)$ with $u(0)=\tilde u(0)=u_0$, then $\Vert u-\tilde u\Vert_{C[0,T]}\le  C\varepsilon$, where $C$ depends only on $\alpha,\,\beta,\,L,\,T$, and $\varepsilon=\sup\big\{x^{\beta-\alpha}\vert f(x,y)-\tilde f(x,y)\vert:x\in(0,T],\,y\in[u_0\hspace{-0.7pt}-\hspace{-0.7pt}Y,\,u_0\hspace{-0.7pt}+\hspace{-0.7pt}Y]\big\}$   (the definition of $\varepsilon$ is analogous to  $\Vert g\Vert_{G^{\text{\fontsize{8pt}{0pt}$\alpha\hspace{-2pt}-\hspace{-3pt}\beta$}}(0,T]}$ in Remark \ref{meaning of continuous dependence}).
\end{lemma}
\begin{proof}
By assumption, $\DCs\beta u,\DCs\beta\tilde u\in C(0,T]$, so $f\big(x,u(x)\big),\tilde f\big(x,\tilde u(x)\big)$ are in $C(0,T]$ and bounded by $Mx^{\alpha-\beta}$. According to Theorem \ref{thm:existence} and the positivity preserving property of $I^\beta_0$, for all $x\in(0,T]$, we have
\begin{equation*}
\begin{aligned}
\big\vert u(x)-\tilde u(x)\big\vert&=\Big\vert I_0^\beta f\big(x,u(x)\big)-I_0^\beta\tilde f\big(x,\tilde u(x)\big)\Big\vert\\
&\le \Big\vert I_0^\beta f\big(x,u(x)\big)-I_0^\beta f\big(x,\tilde u(x)\big)\Big\vert+\Big\vert I_0^\beta f\big(x,\tilde u(x)\big)-I_0^\beta\tilde f\big(x,\tilde u(x)\big)\Big\vert\\
&\le  I_0^\beta\Big\vert f\big(x,u(x)\big)-f\big(x,\tilde u(x)\big)\Big\vert+I_0^\beta \Big\vert f\big(x,\tilde u(x)\big)-\tilde f\big(x,\tilde u(x)\big)\Big\vert\\
&\le  LI_0^\beta\Big[x^{\alpha-\beta}\big\vert u(x)\hspace{-0.7pt}-\hspace{-0.7pt}\tilde u(x)\big\vert\Big]+\varepsilon I_0^\beta x^{\alpha-\beta}.
\end{aligned}
\end{equation*}
Like the proof of Lemma \ref{uniqueness of ivpODE}, by iterating the above inequality, we obtain for all $N\in\mathbb N,\;x\in (0,T]$,
\begin{equation*}
\big\vert u(x)\hspace{-0.7pt}-\hspace{-0.7pt}\tilde u(x)\big\vert\le \Vert u\hspace{-0.7pt}-\hspace{-0.7pt}\tilde u\Vert_{C[0,T]}L^N\big(I_0^\beta [x^{\alpha-\beta}\,\cdot\,]\big)^N1(x)+\varepsilon\sum_{n=0}^{N-1} L^n\big(I_0^\beta [x^{\alpha-\beta}\,\cdot\,]\big)^{n+1}1(x).
\end{equation*}
By Lemma \ref{bound of the product}, as $N\rightarrow\infty$, the first summand goes to 0 uniformly in $x$, and the second summand can be bounded by $\varepsilon\hspace{0.8pt}C$ for some finite $C$ dependent only on $\alpha,\,\beta,\,L,\,T$.
\end{proof}

\begin{lemma}
\label{local existence of ivpODE for locally Lipschitz f}
If $f$ satisfies conditions (i), (ii) and (iii) in Proposition \ref{global existence of ivpODE for locally Lipschitz f}, then there exists $u\in C_\beta[0,h]$ solving the IVP \eqref{eq:ivpODE} on $[0,h]$, as long as $h\in(0,T]$ satisfies
\begin{equation*}
h^\alpha\le\frac YM\bigg(\frac{\Gamma(1+\alpha)}{\Gamma(\alpha\hspace{-0.7pt}+\hspace{-0.7pt}1\hspace{-0.7pt}-\hspace{-0.7pt}\beta)}-\frac1{\Gamma(1\hspace{-0.7pt}-\hspace{-0.7pt}\beta)}\bigg).
\end{equation*}

\end{lemma}

\begin{proof}
Define the function space $U = \big\{\varphi\in C[0,h]:\,\Vert\varphi\hspace{-0.7pt}-\hspace{-0.7pt}u_0\Vert_{C[0,h]}\le Y\big\}$.
For $\varphi\in U$, we know that $f\big(x,\varphi(x)\big)\in C(0,h]$ and $\big\vert f\big(x,\varphi(x)\big)\big\vert\le Mx^{\alpha-\beta}$, so   {by Theorem \ref{thm:existence}} we can define  the following Picard iteration operator
\begin{equation*}
\mathcal P\varphi=u_0+ I_0^\beta f\big(x,\varphi(x)\big).
\end{equation*}
From \eqref{bound for I^beta_0 g} we know that $\mathcal P\varphi\in C[0,h]$, and
\begin{equation*}
\Vert\mathcal P\varphi-u_0\Vert_{C[0,h]}=\Big\Vert I_0^\beta f\big(x,\varphi(x)\big)\Big\Vert_{C[0,h]}\le Mh^\alpha\bigg(\frac{\Gamma(1+\alpha)}{\Gamma(\alpha\hspace{-0.7pt}+\hspace{-0.7pt}1\hspace{-0.7pt}-\hspace{-0.7pt}\beta)}-\frac1{\Gamma(1\hspace{-0.7pt}-\hspace{-0.7pt}\beta)}\bigg)^{-1}\le Y.
\end{equation*}
Therefore, $\mathcal P[U]\subseteq U$. In addition, for any $\varphi,\,\psi\in U$ and any $x\in(0,h]$,
\begin{equation*}
\big\vert\mathcal P\varphi(x)-\mathcal P\psi(x)\big\vert\le I_0^\beta\Big\vert f\big(x,\varphi(x)\big)-f\big(x,\psi(x)\big)\Big\vert\le I_0^\beta\Big[Lx^{\alpha-\beta}\big\vert\varphi(x)\hspace{-0.7pt}-\hspace{-0.7pt}\psi(x)\big\vert\Big].
\end{equation*}
Like the proof of Lemma \ref{uniqueness of ivpODE}, by iterating the above inequality, we obtain for all $N\in\mathbb N,\;x\in(0,h]$,
\begin{equation*}
\big\vert\mathcal P^N\varphi(x)\hspace{-0.7pt}-\hspace{-0.7pt}\mathcal P^N\psi(x)\big\vert\le L^N\big(I_0^\beta [x^{\alpha-\beta}\,\cdot\,]\big)^N\vert\varphi\hspace{-0.7pt}-\hspace{-0.7pt}\psi\vert(x)\le  \Vert\varphi\hspace{-0.7pt}-\hspace{-0.7pt}\psi\Vert_{C[0,h]}L^N\big(I_0^\beta [x^{\alpha-\beta}\,\cdot\,]\big)^N1(x).
\end{equation*}
By Lemma \ref{bound of the product}, 
there exists $N$ large enough, such that $\mathcal P^N$ is a contraction on $U$, which is a complete metric space under the metric induced by $\|\cdot\|_{C[0,h]}$. By a corollary of the Banach fixed point theorem, $\mathcal P$ has a unique fixed point $u_*\in U$. Then, by Theorem \ref{thm:existence}, $u_*$ is a solution to \eqref{eq:ivpODE} on $[0,h]$.
\end{proof}\\

With the preceding lemmata, we are ready to    establish the global  well-posedness of IVP \eqref{eq:ivpODE}.

\begin{proof} \emph{[of Proposition \ref{global existence of ivpODE for locally Lipschitz f}]}
The uniqueness and continuous dependence of the solution are already shown in Lemmata \ref{uniqueness of ivpODE} and  \ref{continuous dependence}, respectively.

By Lemma \ref{local existence of ivpODE for locally Lipschitz f}, for some $h \in(0,T]$, there exists $u_*\in C_\beta[0,h ]$ solving \eqref{eq:ivpODE} on $[0,h ]$. If $h <T$ and $\big\vert u_*(h )\hspace{-0.7pt}-\hspace{-0.7pt}u_0\big\vert< Y$, we are going to extend $u_*$ 
beyond $h$, in a manner similar to the proof of \cite[Theorem 6.8]{kai}. Let us choose $\tilde h \in(h ,T]$ such that
\begin{equation*}
\vert\tilde h-h \vert^\beta\le\frac{Y-\big\vert u_*(h )\hspace{-0.7pt}-\hspace{-0.7pt}u_0\big\vert}{2M\max\{h ^{\alpha-\beta}\!,\,T^{\alpha-\beta}\}}\left(1-\frac{\Gamma(\alpha+1-\beta)}{\Gamma(1\hspace{-0.7pt}+\hspace{-0.7pt}\alpha)\Gamma(1\hspace{-0.7pt}-\hspace{-0.7pt}\beta)}\right) \Gamma(1\hspace{-0.7pt}+\hspace{-0.7pt}\beta).
\end{equation*}
Define the function space 
\begin{equation*}
V=\Big\{\varphi\in C[0,\tilde h]:\,\Vert\varphi\hspace{-0.7pt}-\hspace{-0.7pt}u_*\Vert_{C[0,h ]}=0\text{ and }\big\Vert\varphi\hspace{-0.7pt}-\hspace{-0.7pt}u_*(h )\big\Vert_{C[h ,\tilde h]}\le Y-\big\vert u_*(h )\hspace{-0.7pt}-\hspace{-0.7pt}u_0\big\vert\Big\},
\end{equation*}
which is a complete metric space under the metric induced by $\|\cdot\|_{C[0,\tilde h]}$.

For $\varphi\in V$, we know that $f\big(x,\varphi(x)\big)\in C(0,\tilde h]$ and $\big\vert f\big(x,\varphi(x)\big)\big\vert\le Mx^{\alpha-\beta}$, so $\mathcal P\varphi\in C[0, \tilde h]$, where $\mathcal P$ is introduced in the proof of Lemma \ref{local existence of ivpODE for locally Lipschitz f}.  By Remark \ref{holder cts of D^-1}-(i), $\mathcal P\varphi\in C^{0,\beta}[h , \tilde h]$ with a H\"older constant  
\begin{equation*}
C=\frac{2M\max\{h^{\alpha-\beta}\!,\,T^{\alpha-\beta}\}}{\Gamma(1+\beta)}\left(1-\frac{\Gamma(\alpha+1-\beta)}{\Gamma(1\hspace{-0.7pt}+\hspace{-0.7pt}\alpha)\Gamma(1\hspace{-0.7pt}-\hspace{-0.7pt}\beta)}\right)^{-1}.
\end{equation*}
For all $x\in[h, \tilde h]$, $\big\vert\mathcal P\varphi(x)-\mathcal P\varphi(h )\big\vert\le C\vert x-h \vert^\beta\le C\vert \tilde h-h \vert^\beta\le Y-\big\vert u_*(h )\hspace{-0.7pt}-\hspace{-0.7pt}u_0\big\vert$. Recall that $u_*$ is already a fixed point of $\mathcal P$ on $[0,h ]$, we have $\mathcal P\varphi(h )=\mathcal Pu_*(h )=u_*(h )$. Therefore $\mathcal P\varphi\in V$ and $\mathcal P[V]\subseteq V$. Similar to the proof of Lemma \ref{local existence of ivpODE for locally Lipschitz f}, we can show that $\mathcal P^N$ is a contraction on $V$ for some large $N$, so $\mathcal P$ has a unique fixed point $v_*\in V$, which is a solution to \eqref{eq:ivpODE} on $[0, \tilde h]$. Choose $ \tilde h$ as large as possible and repeat the procedure,
then we can extend the solution all the way to the boundary of the domain. That is, the solution  must exist  on the entire interval $[0,T]$, or attains $u_0\hspace{-0.7pt}+\hspace{-0.7pt}Y$ or $u_0\hspace{-0.7pt}-\hspace{-0.7pt}Y$.
\end{proof}

\subsection{A linear  censored IVP}
 We now consider the IVP $\DCs\beta u=\lambda u$ for a constant $\lambda$. Like its counterpart in classical ODEs, such IVP can play important roles in more general equations.\label{Subsection: A linear censored IVPs}
 
\begin{lemma}\label{thm:inho}
For any $\lambda,u_0\in\mathbb R$, the linear IVP
\begin{equation}
\label{linear equation}
\left\{
\begin{aligned}
\DCs\beta u(x)&=\lambda u(x), &&x\in (0,T],\\
u(x)&=u_0, &&x=0,
\end{aligned}\right.
\end{equation}
has a unique solution in $C_\beta[0,T]$ given by the series $u(x)=u_0\sum_{N=0}^\infty\lambda^N(I^\beta_0)^N1(x)$, which is equivalent to \eqref{solution to eigenvalue problem} by letting $\alpha=\beta$ in \eqref{The formula for I0beta^n}. 
\end{lemma}
\begin{proof}  
By Lemma \ref{bound of the product}, we know that $u=u_0\sum_{N=0}^\infty\lambda^N(I^\beta_0)^N1$ converges uniformly on $[0,T]$, thus $u\in C[0,T]$  with $u(0)=u_0$.
By the continuous dependence in Theorem \ref{thm:existence},
\begin{equation*}
I^\beta_0u=u_0\,I^\beta_0\bigg[\sum_{N=0}^\infty\lambda^N(I^\beta_0)^N1\bigg]=u_0\sum_{N=0}^\infty I^\beta_0\Big[\lambda^N(I^\beta_0)^N1\Big]=u_0\sum_{N=0}^\infty\lambda^N(I^\beta_0)^{N+1}1,
\end{equation*}
therefore $u_0+\lambda\,I^\beta_0u=u$. By Theorem \ref{thm:existence}, $u\in C_\beta[0,T]$ and solves \eqref{linear equation}.
Uniqueness is a consequence of  Proposition \ref{global existence of ivpODE for locally Lipschitz f}.
\end{proof}

\begin{remark}\label{rmk:relax}
 We can obtain the solution \eqref{solution to eigenvalue problem} by Picard iteration, i.e. recursively solving the IVPs: $\DCs\beta u_{m+1}=\lambda u_m$ with $u_m(0)=u_0$ ($m=1,2,\cdots$), where $u_1(x)=u_0$ for all $x$.
\end{remark}

Although the series \eqref{solution to eigenvalue problem} looks cumbersome, it surprisingly decays at the simple algebraic rate $x^{-1-\beta}$ for $\lambda<0$. 

\begin{theorem}\label{thm:decay}
For $\lambda<0$ and $u_0>0$, the solution $u$ to \eqref{linear equation} is completely monotone  (i.e., $(-1)^nu^{(n)}\ge 0$ on $\mathbb R_+$ for $n=0,1,2,...$) and there exists a constant $C>1$ such that
\[
\frac{C^{-1}}{x^{1+\beta}}\le u(x)\le \frac { C}{ x^{1+\beta}},\quad\text{for all }x\ge 1.
\]
\end{theorem}

\begin{proof} 
The complete monotonicity will be proved in Corollary \ref{Cu}, using a probabilistic argument. The upper and lower bounds are proved in Lemmata \ref{upper bound for solutions to linear equation} and \ref{lower bound for solutions to linear equation} below, using a maximum principle argument.
\end{proof}

\begin{remark}\label{rmk:caprelax} $ $
\begin{enumerate}[(i)]
\label{comparison between censored and Caputo relaxation solution}
\item  For Caputo's counterpart of IVP \eqref{linear equation}, i.e., $\DRL\beta\big[u\hspace{-0.7pt}-\hspace{-0.7pt}u(0)\big]=\lambda u$  with $u(0)=u_0$, the solution  can be expressed in terms of the  Mittag-Leffler function
\begin{equation}
\label{Caputo relaxation solution}
u(x)=u_0\sum_{N=0}^\infty\frac{(\lambda x^\beta)^N}{\Gamma(N\beta\hspace{-0.7pt}+\hspace{-0.7pt}1)}.
\end{equation}
For $\lambda<0$ and $u_0>0$, it is completely monotone and decays  at the rate $ x^{-\beta} $ \cite[Theorem 7.3]{kai}. By contrast, the censored relaxation equation \eqref{linear equation} models a new decay regime $ x^{-1-\beta}$. (See also \cite[Page 1623]{WSJMS12} for related fractional relaxation equations, where the decay  rate is  $ x^{-\gamma}$ for some $\gamma\in(0,1)$.)

As a side note, for $\lambda ,u_0>0$, obviously both \eqref{solution to eigenvalue problem} and \eqref{Caputo relaxation solution} increase in $x$ faster than any polynomial. Indeed, for $\lambda=1$, the latter grows at the rate $ e^x $ \cite[Proposition 3.5]{gorenflo2014mittag}, and our numerical results suggest  $ \exp\{x\hspace{-0.7pt}+\hspace{-0.7pt}c\hspace{0.5pt}x^{1-\beta}\}$  for the former, where $c$ is positive and depends only on $\beta$.

\item For \eqref{Caputo relaxation solution} with $\lambda =-1,\;u_0=1$, \cite[Theorem 4]{simon2014comparing} gave the uniform estimates with optimal constants: $\big(1 +\Gamma(1\hspace{-0.7pt}-\hspace{-0.7pt}\beta)x^{\beta}\big)^{-1}\le u(x)\le\big(1+\Gamma(1\hspace{-0.7pt}+\hspace{-0.7pt}\beta)^{-1}x^{\beta}\big)^{-1}$. In Proposition \ref{CaputoRelaxation} we give what we believe to be a new and simple proof  of those bounds, using the same strategy used for the uniform bounds of \eqref{solution to eigenvalue problem}. Recently \cite[Proposition 4.12]{BSV19} gave another new proof   by showing the generalized results for a class of Kilbas–Saigo functions. Our simple proof can also be applied with few modifications to prove those generalized results (see Proposition \ref{CaputoRelaxation Kilbas Saigo}).
In Section \ref{Subsection: Other linear censored IVPs} we will use it again, to prove the uniform bounds of \eqref{solution to linear nonconstant equation}, the solution to $\DCs\beta u=\lambda x^{\alpha-\beta}u$.

\item The reason why our proof is both simple and versatile is that it involves only  maximum principle (mentioned in Remark \ref{rmk:oncfd}-(iii)) and some suitable candidate bounds (e.g. $(1\hspace{-0.7pt}+\hspace{-0.7pt}cx)^{-1-\beta}$), but no specific representation of the solution (e.g. \eqref{solution to eigenvalue problem} or \eqref{eq:exptauinf_intro}). In fact, we expect this strategy to have broader applications. As an example, consider a general Caputo-type derivative $\DCp\psi$ (so $-\DCp\psi$ generates a non-increasing pure jump L\'evy process killed upon leaving $\mathbb R^+$ \cite{Kol19}) for a L\'evy measure $\psi$ with $\int_0^\infty \min \{r,1\}\psi(\dd r)<\infty$,
\begin{equation*}
\DCp\psi u(x)=\int_0^x \big(u(x)-u(x\hspace{-0.7pt}-\hspace{-0.7pt}r)\big)\psi(\dd r)+\big(u(x)-u(0)\big)\psi\big((x,\infty)\big),
\end{equation*}
and its relaxation equation $\DCp\psi u=\lambda u\;(\lambda<0)$. The solution is given as an expectation or a series under mild assumptions \cite[Lemma 3.4]{JK16}. It is possible for our strategy to prove two-sided bounds of this solution without  those representations of it. Indeed, this has already been done for certain absolutely continuous $\psi$ (so $\psi(\dd r)=\psi(r)\,\dd r$). For instance, for compactly supported $\psi$, the  solution is given an upper bound of the decay rate $x^{-1}$ \cite[Remark 3.5]{DYZ17}.  A special case is the truncated fractional kernel $\psi(r)=\mathbf 1_{\{r\in(0,\,\delta]\}}\hspace{0.8pt}r^{-1-\beta}$ with $\delta>0$ (see \cite[Theorem 3.2]{DYZ17}, which inspired our proof). Even if  $\psi$ is not compactly supported, as long as $\int_0^\infty r\psi(r)\dd{r}<\infty$, the same argument applies. Another  instance is when $r^{1+\beta}\psi(r)$ is continuous on $\Rp$ and bounded within $[C^{-1},\,C]$ for some $C>1$,  our strategy (in Proposition \ref{CaputoRelaxation}) can still prove the two-sided bounds, both of $x^{-\beta}$ decay.

\end{enumerate}
\end{remark}


\begin{lemma}
\label{upper solutions to linear equation}
If $\lambda<0$ and  $v\in C^1(0,T]\cap C[0,T]$  satisfies $\DCs\beta v   \ge \lambda v$, then $v$ is nonnegative if $v(0)\geq 0$, 
and positive if $v(0)>0$.
\end{lemma}

\begin{proof}
 If $v(0)\geq 0$ but $v$ is not nonnegative, let $x_0$ be a minimum point of $v$ on $[0,T]$, 
then $x_0>0$ and $ v(x_0)<0$. So we have  $0<\lambda v(x_0) \leq  \DCs\beta v(x_0)$. 
However, since $v\in C^1(0,T]$,  by Remark \ref{rmk:oncfd}-(iii)  we know that
\begin{equation*}
\DCs\beta v(x_0)=\int_0^{x_0}\big(v(x_0)-v(x_0\hspace{-0.7pt}-\hspace{-0.7pt}r)\big)\frac{ r^{-1-\beta}}{\big|\Gamma(-\beta)\big|}\,{\rm d}r\leq 0,
\end{equation*}
which is a contradiction.  Similarly, we can prove that $v$ is positive if $v(0)>0$.
\end{proof}

With the above lemma, we can get  the desired bounds for the solution to \eqref{linear equation}.

\begin{lemma}
\label{upper bound for solutions to linear equation}
For $\lambda<0$ and $u_0=1$, the solution $u$ to \eqref{linear equation} is positive and can be bounded from above by   $v(x)=(1+c \vert\lambda\vert^{1/\beta}x)^{-1-\beta}$, where
\begin{equation}\label{eq:cbeta}
c =\frac{\big\vert\Gamma(-\beta)\big\vert^{1/\beta}}2\Big(\frac{2^{1+\beta}-1}{1-\beta}+\frac2\beta\Big)^{-1/\beta}.
\end{equation}
\end{lemma}

\begin{proof}  We know from  Lemma \ref{thm:inho} that $u\in C[0,T]$.  We also know that $u\in C^1(0,T]$ from the uniform convergence of the series representation of its derivative on any closed interval not containing 0. Thus   $u$ remains positive by Lemma \ref{upper solutions to linear equation}.

Let $v(x)=(1+x/c)^{-1-\beta}$ and first assume that there is a constant $c>0$  such that $v$   satisfies the condition in Lemma \ref{upper solutions to linear equation}.  Under this assumption, we get $\DCs\beta(v\hspace{-0.7pt}-\hspace{-0.7pt}u)-\lambda(v\hspace{-0.7pt}-\hspace{-0.7pt}u)\ge0$ with $v(0)\hspace{-0.7pt}-\hspace{-0.7pt}u(0)=0$. By Lemma \ref{upper solutions to linear equation}, we have    $v\geq u$ on $[0,T]$.

Now, given $\beta\in(0,1)$ and $\lambda<0$,   up to a constant multiple, 
it remains to find a constant $c>0$ such that $v(x)=(x+c)^{-1-\beta}$ satisfies  $\DCs\beta v\ge \lambda v$, i.e., for all $x>0$,
\begin{equation*}
\int_0^x\big(v(x)-v(x\hspace{-0.7pt}-\hspace{-0.7pt}r)\big)\frac{ r^{-1-\beta}}{\big|\Gamma(-\beta)\big|}\,{\rm d}r  \ge \lambda v(x),
\end{equation*}
  or equivalently, for all $x>0$,
\begin{equation}
\label{equivalent inequality}
\big\vert\lambda\Gamma(-\beta)\big\vert\ge\int_0^x\bigg(\frac{(x+c)^{1+\beta}}{(x\hspace{-0.7pt}+\hspace{-0.7pt}c\hspace{-0.7pt}-\hspace{-0.7pt}r)^{1+\beta}}-1\bigg)\frac{{\rm d}r}{r^{1+\beta}} .
\end{equation}
Let $y=x+c$, then the   right-hand side  of \eqref{equivalent inequality} equals
\begin{equation}
\label{inequality on the first subinterval}
\int_0^{y-c}\bigg(\frac{y^{1+\beta}}{(y\hspace{-0.7pt}-\hspace{-0.7pt}r)^{1+\beta}}-1\bigg)\frac{{\rm d}r}{r^{1+\beta}}=y^{-\beta}\int_0^{1-c/y}\bigg(\frac1{(1\hspace{-0.7pt}-\hspace{-0.7pt}s)^{1+\beta}}-1\bigg)\frac{{\rm d}s}{s^{1+\beta}}.
\end{equation}
If $y\le2c$, then  the right-hand side of \eqref{inequality on the first subinterval} can be bounded from above by
\begin{equation*}
y^{-\beta}\int_0^{1/2}\bigg(\frac1{(1\hspace{-0.7pt}-\hspace{-0.7pt}s)^{1+\beta}}-1\bigg)\frac{{\rm d}s}{s^{1+\beta}}\le y^{-\beta}\int_0^{1/2}2s(2^{1+\beta}-1)\frac{{\rm d}s}{s^{1+\beta}}
=\frac{2^\beta}{y^\beta}\frac{2^{1+\beta}-1}{1-\beta}.
\end{equation*}
If $y>2c$, we split the interval $[0,1\hspace{-0.7pt}-\hspace{-0.7pt}c/y]$ into two parts $[0,1/2]$ and $[1/2,1\hspace{-0.7pt}-\hspace{-0.7pt}c/y]$. For the second subinterval,
\begin{equation*}
\int_{1/2}^{1-c/y}\bigg(\frac1{(1\hspace{-0.7pt}-\hspace{-0.7pt}s)^{1+\beta}}-1\bigg)\frac{{\rm d}s}{s^{1+\beta}}\le2^{1+\beta}\int_{1/2}^{1-c/y}\frac{{\rm d}s}{(1\hspace{-0.7pt}-\hspace{-0.7pt}s)^{1+\beta}}\le\frac{2^{1+\beta}}\beta\Big(\frac yc\Big)^\beta.
\end{equation*}
Therefore the right-hand side of \eqref{equivalent inequality} can be bounded from above by
\begin{equation*}
\frac{2^\beta}{y^\beta}\frac{2^{1+\beta}-1}{1-\beta}+\frac{2^{1+\beta}}{y^\beta\beta}\Big(\frac yc\Big)^\beta\le\frac{2^\beta}{c^\beta}\frac{2^{1+\beta}-1}{1-\beta}+\frac{2^{1+\beta}}{c^\beta\beta}.
\end{equation*}
Let
\begin{equation*}
c\ge\frac{2}{\big\vert\lambda\Gamma(-\beta)\big\vert^{1/\beta}}\Big(\frac{2^{1+\beta}-1}{1-\beta}+\frac2\beta\Big)^{1/\beta},
\end{equation*}
then \eqref{equivalent inequality} will be satisfied and we are done.
\end{proof}

\begin{lemma}
\label{lower bound for solutions to linear equation}
If $\lambda<0$ and $u_0=1$, then the solution to \eqref{linear equation} is bounded from below by $w(x)=(1+d \vert\lambda\vert^{1/\beta}x)^{-1}(1+d^\beta\vert\lambda\vert x^\beta)^{-1}$, where $d = \big\vert\Gamma(-\beta)\big\vert^{1/\beta}\max\big\{4,\,(1\hspace{-0.7pt}-\hspace{-0.7pt}\beta)(1\hspace{-0.7pt}+\hspace{-0.7pt}2^\beta)/\beta\big\}^{1/\beta}$.
\end{lemma}
 
The proof is similar to that of Lemma \ref{upper bound for solutions to linear equation}, so we put it in Appendix \ref{maximum principle proofs}.

\subsection{Other linear censored IVPs}\label{Subsection: Other linear censored IVPs}

We conclude this section by generalizing IVP \eqref{linear equation} to the inhomogeneous version with a variable coefficient. 

\begin{proposition}\label{lem:inhomlin}
Let $\lambda,u_0\in\mathbb R$ and $g\in C(0,T]$ such that $\big\vert g(x)\big\vert\le M x^{\alpha-\beta}$ for some $M,\alpha>0$ and all $x\in(0,T]$. Then the inhomogeneous linear IVP
\begin{equation}
\label{linear in equation}
\left\{
\begin{aligned}
\DCs\beta u(x)&=\lambda u(x)+g(x), &x&\in (0,T],\\
u(x)&=u_0, &x&=0,
\end{aligned}
\right.
\end{equation}
has a unique solution in $C_\beta[0,T]$ given by the following series
\begin{equation}
\label{solution to inh eigenvalue problem}
u(x)=u_0 \sum_{N=0}^\infty \lambda^N(I^\beta_0)^N1(x) +\sum_{N=0}^\infty \lambda^N(I^\beta_0)^{N+1}g(x),
\end{equation}
and from Proposition \ref{global existence of ivpODE for locally Lipschitz f}, $u$ inherits the continuous dependence on $u_0$ and $g$.  
\end{proposition}
 
\begin{proof}
From \eqref{bound for I^beta_0 g} we know $I_0^\beta g\in C[0,T]$ and thus $(I_0^\beta)^{N+1} g\in C[0,T]$ for $N=0,1,2,\cdots$. Then, by the positivity preserving property of $I^\beta_0$, we have $$\big\vert(I^\beta_0)^{N+1}g\big\vert \le (I^\beta_0)^N\vert I^\beta_0g\vert \le (I^\beta_0)^N1\cdot\Vert I^\beta_0g\Vert_{C[0,T]}.$$
By Lemma \ref{bound of the product},  the series $\sum_{N=0}^\infty \lambda^N(I^\beta_0)^N1$ converges uniformly on $[0,T]$, and  so does $\sum_{N=0}^\infty \lambda^N(I^\beta_0)^{N+1}g$. So the function $u$ given by \eqref{solution to inh eigenvalue problem} is in $C[0,T]$ and $I^\beta_0u$ is well-defined. Therefore,
\begin{equation*}
\begin{aligned}
\lambda\,I^\beta_0u+I^\beta_0g&=\lambda u_0\,I^\beta_0\sum_{N=0}^\infty \lambda^N(I^\beta_0)^N1+\lambda I^\beta_0\sum_{N=0}^\infty \lambda^N(I^\beta_0)^{N+1}g+I^\beta_0g\\
&=\lambda u_0\sum_{N=0}^\infty \lambda^N(I^\beta_0)^{N+1}1+\lambda\sum_{N=0}^\infty \lambda^N(I^\beta_0)^{N+2}g+I^\beta_0g\\
&=u-u_0,
\end{aligned}
\end{equation*}
where the second equality is due to the continuous dependence in Theorem \ref{thm:existence}. Using Theorem \ref{thm:existence} again, we know that $u$ solves \eqref{linear in equation}. By Proposition \ref{global existence of ivpODE for locally Lipschitz f}, $u$ is actually the unique solution in $C_\beta[0,T]$, and depends on $(u_0,g)$ continuously.  
\end{proof}

\begin{lemma}
For $\lambda,u_0\in\mathbb R$ and $\alpha>0$, the linear IVP
\begin{equation}
\label{linear nonconstant equation}
\left\{
\begin{aligned}
\DCs\beta u(x)&=\lambda x^{\alpha-\beta}u(x), &&x\in (0,T],\\
u(x)&=u_0, &&x=0.
\end{aligned}
\right.
\end{equation}
has a unique solution in $C_\beta[0,T]$ given by the following series
\begin{equation}
\label{solution to linear nonconstant equation}
\begin{aligned}
u(x)& =u_0\sum_{N=0}^\infty\lambda^N\big(I_0^\beta [x^{\alpha-\beta}\,\cdot\,]\big)^N1(x) \\
& =u_0\sum_{N=0}^\infty(\lambda x^{\alpha})^N\prod_{n=1}^N\bigg(\frac{\Gamma(1+n\alpha)}{\Gamma(n\alpha\hspace{-0.7pt}+\hspace{-0.7pt}1\hspace{-0.7pt}-\hspace{-0.7pt}\beta)}-\frac1{\Gamma(1\hspace{-0.7pt}-\hspace{-0.7pt}\beta)}\bigg)^{-1}.
\end{aligned}
\end{equation}
\end{lemma}

We omit the proof since it is a special case of Proposition \ref{lem:inhomvarlin}. Surprisingly, the solution \eqref{solution to linear nonconstant equation} has a decay property analogous to what we see in Section \ref{Subsection: A linear censored IVPs}.

\begin{proposition}\label{thm:decay 1+alpha}
For $\lambda<0$ and $u_0>0$, there exists a constant $C>1$ such that the solution $u$ to \eqref{linear nonconstant equation} satisfies
\[
\frac{C^{-1}}{x^{1+\alpha}}\le u(x)\le \frac { C}{ x^{1+\alpha}},\quad\text{for all }x\ge 1.
\]
\end{proposition}

\begin{proof}
See Lemmas \ref{upper bound for solutions to linear nonconstant equation} and \ref{lower bound for solutions to linear nonconstant equation}, which can be shown by maximum principle arguments.
\end{proof}

\begin{remark}$ $
\begin{enumerate}[(i)]
    \item For Caputo's counterpart of IVP \eqref{linear nonconstant equation}, the solution can be expressed in terms of the Kilbas–Saigo function
\begin{equation}
\label{solution to Caputo linear nonconstant equation}
u(x)=u_0\sum_{N=0}^\infty(\lambda x^{\alpha})^N\prod_{n=1}^N\left(\frac{\Gamma(1+n\alpha)}{\Gamma(n\alpha\hspace{-0.7pt}+\hspace{-0.7pt}1\hspace{-0.7pt}-\hspace{-0.7pt}\beta)}\right)^{-1}.
\end{equation}

For $\lambda<0$ and $u_0>0$, the  solution  \eqref{solution to Caputo linear nonconstant equation} decays   at the rate $ x^{-\alpha}$ \cite[Remark 4.6 (c)]{BSV19} (and is completely monotone \cite[Remark 3.1 (d)]{BSV19} if $\alpha\le1$). On the other hand, the censored IVP \eqref{linear nonconstant equation} once again models a new decay regime $ x^{-1-\alpha} $.

As a side note, for $\lambda,u_0>0$, both \eqref{solution to linear nonconstant equation} and \eqref{solution to Caputo linear nonconstant equation} increase in $x$ faster than any polynomial. Indeed, for $\lambda =1$, the latter can be bounded by $\exp\big\{(\frac\beta\alpha\hspace{-0.5pt}+\hspace{-0.5pt}\varepsilon)\hspace{0.5pt} x^{\alpha/\beta}\big\}$ for any $\varepsilon$ positive and $x$ large enough \cite[Theorem 5.9]{gorenflo2014mittag}, and our numerical results suggest the same for the former. 

\item For \eqref{solution to Caputo linear nonconstant equation} with $\lambda =-1,\;u_0=1$,  \cite[Proposition 4.12]{BSV19} proved the uniform bounds $$\big(1+\Gamma(1\hspace{-0.7pt}-\hspace{-0.7pt}\beta)x^\alpha\big)^{-1}\le u(x)\le\big(1+\Gamma(1\hspace{-0.7pt}+\hspace{-0.7pt}\alpha\hspace{-0.7pt}-\hspace{-0.7pt}\beta)\Gamma(1\hspace{-0.7pt}+\hspace{-0.7pt}\alpha)^{-1}x^\alpha\big)^{-1}.$$   As mentioned in Remark \ref{rmk:caprelax}-(ii), our maximum principle argument can  give a new and simple proof of those bounds.

\item For $\alpha=1$, \eqref{linear nonconstant equation} can be seen as a linear equation $\sigma \DCs\beta u=\lambda u$, where we let $\sigma(x)=x^{\beta-1}$ so that the rescaled fractional derivative $\sigma \DCs\beta$ acts like the classical first order derivative on linear functions. This kind of rescaling naturally extends to more general nonlocal derivatives, and we refer to \cite{du19} for a discusison of nonlocal calculus and rescaling.

\end{enumerate}
\end{remark}

\begin{lemma}
\label{upper bound for solutions to linear nonconstant equation}
For $\lambda<0$ and $u_0=1$, the solution to \eqref{linear nonconstant equation} is positive and can be bounded from above by $v(x)=\big(1+(c\vert\lambda\vert^{1/\alpha}x)^{1+\alpha}\big)^{-1}$, where
\begin{equation*}
c=\frac{\big\vert\Gamma(-\beta)\big\vert^{1/\alpha}}{2^{\beta/\alpha}}\bigg(\frac{2^{1+\alpha}}{\alpha}+\frac{\alpha^{\alpha/(1+\alpha)}}{1-\beta}\bigg)^{-1/\alpha}.
\end{equation*}
\end{lemma}

The proof is similar to that of Lemma \ref{upper bound for solutions to linear equation}, so we put it in Appendix \ref{maximum principle proofs}.

\begin{lemma}
\label{lower bound for solutions to linear nonconstant equation}
If $\lambda<0$ and $u_0=1$, then the solution to \eqref{linear nonconstant equation} is bounded from below by $w(x)=(1+d\vert\lambda\vert^{1/\alpha}x)^{-1}(1+d^\alpha\vert\lambda\vert x^\alpha)^{-1}$, where $d=\big\vert\Gamma(-\beta)\big\vert^{1/\alpha}\max\big\{4,\,(1\hspace{-0.7pt}+\hspace{-0.7pt}2^\alpha)(1\hspace{-0.7pt}-\hspace{-0.7pt}\beta)/\alpha\big\}^{1/\alpha}$.
\end{lemma}

The proof is parallel to that of Lemma \ref{lower bound for solutions to linear equation}, so we omit it.

\begin{proposition}\label{lem:inhomvarlin}
Let $\lambda,u_0\in\mathbb R,\;\alpha>0$ and $g\in C(0,T]$ such that $\big\vert g(x)\big\vert\le M x^{\gamma-\beta}$ for some $M,\gamma>0$ and all $x\in(0,T]$. Then the inhomogeneous linear IVP
\begin{equation}
\label{linear variable in equation}
\left\{
\begin{aligned}
\DCs\beta u(x)&=\lambda x^{\alpha-\beta}u(x)+g(x), &x&\in (0,T],\\
u(x)&=u_0, &x&=0,
\end{aligned}
\right.
\end{equation}
has a unique solution in $C_\beta[0,T]$ given by the following series
\begin{equation}
\label{solution to variable inh eigenvalue problem}
u(x)=u_0 \sum_{N=0}^\infty \lambda^N\big(I_0^\beta [x^{\alpha-\beta}\,\cdot\,]\big)^N1(x) +\sum_{N=0}^\infty \lambda^N\big(I_0^\beta [x^{\alpha-\beta}\,\cdot\,]\big)^NI^\beta_0g(x).
\end{equation}
From Proposition \ref{global existence of ivpODE for locally Lipschitz f}, $u$ inherits the continuous dependence on $u_0$ and $g$.  
\end{proposition}

The proof is parallel to that of Proposition \ref{lem:inhomlin}, so we omit it. A quick check can be done by Picard iteration.

\section{Censored decreasing \texorpdfstring{$\beta$}{beta}-stable process }\label{sec:hit_0}

In this section we first  prove that the hitting time of 0 (or lifetime) for the censored decreasing $\beta$-stable process is finite and   that $I_0^\beta$ has probabilistic representations    \eqref{eq:serintro} and \eqref{eq:fkintro}. We then use these results to prove that our censored process is Feller with generator $-\DCs\beta$, which in turn leads us to   show that the Laplace transform of the lifetime equals the series  \eqref{solution to eigenvalue problem}, and  thus they are completely monotone. We denote by $\mathbf 1_A$ the indicator function of a set $A$. All our stochastic processes are real-valued right-continuous with left limits (c\`adl\`ag), hence we always assume the   canonical underlying filtered probability space as in \cite[Chapter O]{bertoin}. For a stochastic process $Y=\{Y_s\}_{s\ge0}$ and a real-valued integrable function $f$ on the probability space of $Y$, we use the  notation $\mathbb E_y\big[f(Y)\big]=\mathbb E\big[f(Y)\,\big|\,Y_0\!=\!y\big]$, $\mathbb E\big[f(Y)\big]=\mathbb E_0\big[f(Y)\big]$, and correspondingly $\mathbb P_y[A]$, $\mathbb P[A]$ when $f=\mathbf 1_A$. We write $Y_{t-}=\lim_{s\uparrow t}Y_s$. By a \emph{$\beta$-stable subordinator} ($\beta\in(0,1)$) we mean the L\'evy process $-S^1=\{-S_s^1\}_{s\ge0}$ characterised by the Laplace transform $\mathbb E\big[\!\exp\{kS^1_s\}\big]=\exp\{-sk^\beta\}$, $k,s>0$ \cite[Chapter III]{bertoin}.  
We denote by $B[0,T]$ the set of real-valued bounded Borel measurable functions on $[0,T]$ and define $C_\infty(0,T]=\big\{u\in C[0,T]: u(0)=0\big\}$, both understood as Banach spaces  with the sup norm. We extend the domain of any $f\in B[0,T]$  to a cemetery state $\partial$ imposing $f(\partial )=0$.  As discussed in Section \ref{Section: Introduction}, we treat the censored decreasing stable process in $\Rp$ because it is generated by $-\DCs\beta$, where $\DCs\beta$ is the ``left'' censored derivative  (at $0$).
However, it should be clear that all the results in this section translate immediately to the censored stable subordinator in $(-\infty, b)$ when  paired with the  ``right'' censored derivative at $b\in\mathbb R$.
 
\subsection{Construction and finite lifetime}\label{sec:hit_0_1}
The starting point of the censored process is always assumed to be fixed to some $x>0$.
We define the \emph{censored decreasing $\beta$-stable process} $S^c$ by the INW piecing together construction, then \cite[Theorem 1.1 and Section 5.i]{INW66} guarantees us a c\`adl\`ag strong (sub-)Markov process. The construction is:  run $x\hspace{-0.7pt}+\hspace{-0.7pt}S_t^1$   until $\tau_1$, the  time when it first exits $(0,T]$, where $-S^1$ is a $\beta$-stable subordinator (started at 0); then kill the process if $x\hspace{-0.7pt}+\hspace{-0.7pt}S^1_{\tau_1-}\le0$; otherwise piece together an independent copy of $S^1$ started at $x\hspace{-0.7pt}+\hspace{-0.7pt}S^1_{\tau_1-}$  and repeat the same procedure for at most countably many times. 

With Lemma \ref{lem:1} we prove that we can directly define the censored decreasing $\beta$-stable process $S^{c}\,|\,S^c_0\!=\!x$  as
\begin{equation}\label{eq:cen}
S^{c}_{t} :=\left\{\begin{aligned}&  \widetilde S^{j}_{t} , &&  \tau_{j\hspace{-0.8pt}-\!1}\le t< \tau_{j},\;j\in\mathbb N,\\
&\partial, && t\ge\tau_\infty,
\end{aligned}
\right.
\end{equation}
with
\begin{equation*}
\widetilde S^{j}_{t} :=\left\{
\begin{aligned}
&x+ S^j_t, &&j=1,\\
&\widetilde S^{j-1}_{\tau_{j\hspace{-0.8pt}-\!1}-}+S^{j}_{t-\tau_{j\hspace{-0.8pt}-\!1}}, &&j\ge 2,
\end{aligned}\right.
\quad\text{and}\quad
\tau_{j}:=\left\{
\begin{aligned}
&\;0, &&j=0,\\
&\,\inf\big\{s>\tau_{j\hspace{-0.8pt}-\!1}: \widetilde S^{j}_{s}\le 0\big\}, &&j\in\mathbb N,\\
&\lim_{j\to\infty}\tau_{j}, &&j=\infty,
\end{aligned}\right.
\end{equation*}
where $\{-S^{j}\}_{j\in\mathbb N}$ is an \textit{i.i.d.} collection of $\beta$-stable subordinators. Recall \cite[Chapter III]{bertoin} the expectation of the inverse stable subordinator
\begin{equation}
\label{eq:id}
\mathbb E\big[E_1(y)\big]=y^\beta/\Gamma(\beta\hspace{-0.7pt}+\hspace{-0.7pt}1),\;\text{where}\;E_j(y):=\inf\big\{s>0:\, y<-S^j_s\big\}, \;j\in\mathbb N,\;y>0.
\end{equation}

\begin{lemma}\label{lem:1} For any $x>0$ and $j\in\mathbb N$, assuming $S^c_0=x$, we have
\begin{enumerate}[(i)]
    \item $\mathbb E_x[\tau_{j}]<\infty$, $\mathbb P_x\big[S^c_{\tau_{j}}\in(0,x)\big]=1$ and $S^c_{\tau_{j}}$ has the density $k_j(x,\,\cdot\,)$, as defined in \eqref{eq:kern};

\item $ S^c_{\tau_{j}}>0$, and \eqref{eq:cen} 
equals the INW construction of the censored decreasing $\beta$-stable process;
\item $ $\\[-28pt]
\begin{align}
\mathbb E_x[\tau_{j+1}\hspace{-0.7pt}-\hspace{-0.7pt}\tau_j ]&=\mathbb E_x\big[E_{j+1}(S^c_{\tau_{j}})\big]=\int_0^x \frac{y^\beta}{\Gamma(\beta\hspace{-0.7pt}+\hspace{-0.7pt}1)}k_{j}(x,y)\,{\rm d}y;
\label{eq:expE}
\end{align}

\item $\mathbb P_x[\tau_\infty<\infty]=1$ and $\mathbb P_x\big[S^c_{\tau_\infty-}=0\big]=1$.
\end{enumerate}
\end{lemma}

\begin{proof} The statement (ii) follows immediately from (i). We now prove (i) by induction.
For $j=1$, $\mathbb E_x[\tau_1]=\mathbb E\big[E_1(x)\big]=x^{\beta}/\Gamma(\beta\hspace{-0.7pt}+\hspace{-0.7pt}1)<\infty$, and it is known that $S^c_{\tau_1}=x+S^1_{\tau_1-}$ is beta-distributed on $(0,x)$ with density $ k_1(x,\,\cdot\,)$ \cite[Chapter III, Proposition 2]{bertoin}. Then we perform induction for each $j\ge1$: since $\tau_{j}<\infty$, $S^{c}_{\tau_{j}}>0$ and $S^{j+1}$ is independent of $(S^{c}_{\tau_{j}},\tau_{j})$, we have
\begin{align}
\tau_{j+1}-\tau_{j}&=\inf\big\{s>\tau_{j}: S^{c}_{\tau_{j}}<-S^{j+1}_{s-\tau_{j}}\big\}-\tau_{j}\nonumber\\
&=\inf\big\{r>0: S^{c}_{\tau_{j}}<-S^{j+1}_{r}\big\}\nonumber\\
&=E_{j+1}(S^{c}_{\tau_{j}}).\label{eq:com}
\end{align}
Combining \eqref{eq:com}  with $S^{c}_{\tau_{j}}<x$  and \eqref{eq:id}, we obtain
\[
\mathbb E_x[\tau_{j+1}]=\mathbb E_x\big[E_{j+1}(S^{c}_{\tau_{j}})\big]+\mathbb E_x[\tau_{j}]\le \mathbb E\big[E_{j+1}(x)\big]+\mathbb E_x[\tau_{j}]<\infty.
\]
By definition and \eqref{eq:com}, we have
\begin{equation*}
S^{c}_{\tau_{j\hspace{-0.5pt}+\hspace{-0.5pt}1}}=S^{c}_{\tau_{j}}+S^{j+1}_{(\tau_{j\hspace{-0.5pt}+\hspace{-0.5pt}1}-\tau_{j})-}=S^{c}_{\tau_{j}}+S^{j+1}_{E_{j\hspace{-0.5pt}+\hspace{-0.5pt}1}(S^{c}_{\tau_{j}})-}\in(0,S^{c}_{\tau_{j}})\subseteq(0,x).
\end{equation*}
Therefore for any bounded measurable $f$, we have
\begin{align*}
\mathbb E_x\Big[f\big( S^{c}_{\tau_{j+1}}\big)\Big]&=\mathbb E_x\bigg[f\Big(S^{c}_{\tau_{j}}+ S^{j+1}_{E_{j+1}(S^{c}_{\tau_{j}})-}\Big)\bigg]\\
&=\int_0^x\left(\int_0^y f(z) k_1(y,z)\,{\rm d}z\right)k_{j}(x,y)\,{\rm d}y\\
&=\int_0^xf(z)\left(\int_z^x  k_{j}(x,y)\,k_1(y,z)\,{\rm d}y\right) {\rm d}z,
\end{align*}
where the second equality holds because $S^{c}_{\tau_{j}}$ is independent of $S^{j+1}$ and has the density $ k_j(x,\,\cdot\,)$; the last equality is due to Fubini's theorem. By Remark \ref{rmk:intto1} we know that $S^c_{\tau_{j\hspace{-0.5pt}+\hspace{-0.5pt}1}}$ has the density $ k_{j\hspace{-0.5pt}+\hspace{-0.5pt}1}(x,\,\cdot\,)$. The induction step is now complete.

For part (iii), by \eqref{eq:com} we have $\mathbb E_x[\tau_{j+1}\hspace{-0.7pt}-\hspace{-0.7pt}\tau_j]=\mathbb E_x\big[E_{j+1}(S^c_{\tau_{j}})\big]$, meanwhile, since $S^{j+1}$ is independent of $S^c_{\tau_{j}}$, by \eqref{eq:id} we have
\begin{align*}
\mathbb E_x\Big[E_{j+1}\big(S^c_{\tau_{j}}\big) \Big]&=\int_0^x \mathbb E\big[E_{j+1}(y)\big]k_{j}(x,y)\,{\rm d}y=\int_0^x \frac{y^\beta}{\Gamma(\beta+1)}k_{j}(x,y)\,{\rm d}y.
\end{align*}
We now prove part (iv). The results obtained so far are enough to derive Theorem \ref{thm:main7} below, which immediately implies that $\mathbb P_x[\tau_\infty<\infty]=1$. To prove $\mathbb P_x[S^c_{\tau_\infty-}>0]=0$, first, observe that 
\[
\mathbb P_x\big[S^c_{\tau_\infty-}>0\big]\le \sum_{n=1}^\infty \mathbb P_x\big[S^c_{\tau_\infty-}\ge n^{-1}\big],
\] 
and for each $n\in\mathbb N$
\begin{align*}
\mathbb P_x\big[S^c_{\tau_\infty-}\ge n^{-1}\big] &=\mathbb P_x\bigg[\bigcap_{j=1}^\infty\big\{  S^c_{\tau_{j}}\ge n^{-1}\big\}\bigg]=\lim_{j\to\infty } \mathbb P_x\big[ S^c_{\tau_{j}}\ge n^{-1}\big],
\end{align*}
where we used $\{  S^c_{\tau_{j}}\ge n^{-1}\}\supseteq \{  S^c_{\tau_{j+1}}\ge n^{-1}\}$ for each $j\in\mathbb N$ and convergence  from above of finite measures. Then, Chebyshev's inequality and the above results guarantee that
\begin{align*}
\frac{1}{n}\mathbb P_x\big[ S^c_{\tau_{j}}\ge n^{-1}\big] \le  \mathbb E_x\big[ S^c_{\tau_{j}}\big]=\int_0^x k_j(x,y)\hspace{0.5pt}y\,{\rm d}y,
\end{align*}
 and the right-hand side goes to $0$ as $j\to\infty$ by Lemma \ref{thm:kseries}.
\end{proof}

We can now prove our main result of this subsection, which gives \eqref{eq:ltintro}.

\begin{theorem}\label{thm:main7} The hitting time of $0$ of the censored $\beta$-stable L\'evy process \eqref{eq:cen} is finite in expectation, with $\mathbb E_x[\tau_\infty]=\mathbb E_x[\tau_1]\big(1-\sin(\beta\pi)/(\beta\pi)\big)^{-1},\, x>0.$
\end{theorem}

\begin{remark}\label{lem:ind}
Our key ingredient for proving Theorem \ref{thm:main7} is the following closed formula for \eqref{eq:expE} (obtained in the proof of Lemma \ref{thm:kseries}) 
\begin{equation*}
\int_0^xy^\beta k_j(x,y)\,{\rm d}y=x^\beta \big(\Gamma(\beta\hspace{-0.7pt}+\hspace{-0.7pt}1)\hspace{0.5pt}\Gamma(1\hspace{-0.7pt}-\hspace{-0.7pt}\beta)\big)^{-j} ,\quad\text{for all $j\in\mathbb N$ and $x>0$}.
\end{equation*}
\end{remark}

\begin{proof} \emph{[of Theorem \ref{thm:main7}]} On the one hand, by Monotone Convergence Theorem, $\mathbb E_x[\tau_\infty]=\lim\limits_{j\to\infty}\mathbb E_x[\tau_{j+1}]$.  
On the other hand, by   \eqref{eq:id}, \eqref{eq:expE} and Remark \ref{lem:ind},  for each $j\in\mathbb N$, 
\begin{equation*}
\mathbb E_x[\tau_{j+1}]=\mathbb E_x[\tau_{1}]+\sum_{i=1}^{j} \mathbb E_x[\tau_{ i+1}\hspace{-0.7pt}-\hspace{-0.7pt}\tau_{ i}] = \frac{x^\beta}{\Gamma(\beta\hspace{-0.7pt}+\hspace{-0.7pt}1)} \sum_{i=0}^{j}\big(\Gamma(\beta\hspace{-0.7pt}+\hspace{-0.7pt}1)\hspace{0.5pt}\Gamma(1\hspace{-0.7pt}-\hspace{-0.7pt}\beta)\big)^{-i},
\label{eq:eq}
\end{equation*}
and as $\Gamma(\beta\hspace{-0.7pt}+\hspace{-0.7pt}1)\hspace{0.5pt}\Gamma(1\hspace{-0.7pt}-\hspace{-0.7pt}\beta)= \beta\pi/\sin(\beta\pi)>1$, the result follows letting $j\to \infty$.
\end{proof}

\begin{remark}\label{rmk:tau} $ $
\begin{enumerate}[(i)]
 \item Theorem \ref{thm:main7} is not obvious. For instance, the censored \emph{symmetric} $\beta$-stable   process for $\beta\in(0,1)$ never hits the boundary, whether the  censoring is performed in an interval or $\Rp$ \cite[Theorem 1.1-(1)]{BBC03}.
 \item Any compound Poisson process in $\mathbb R^d$ censored upon exiting an open set must have infinite lifetime, and so does a non-increasing compound Poisson process censored in $(0,T]$. This is because the lifetime can be bounded from below by $\sum_{n=1}^\infty e_n=\infty$, where $\{e_n\}_{n\in\mathbb N}$ is an infinite subset of the \textit{i.i.d.} exponential waiting times of the process.
 \item The censored gamma subordinator with L\'evy measure $\psi(r)=e^{-r}/r$ \cite[Example 5.10]{Bogdan} seems to have infinite lifetime, because our numerical simulations indicate pathwise that $\tau_j\approx2\sqrt{j/3}$ and $S^c_{\tau_j}\approx\exp{-\sqrt{3j}}$ for $x=1$ and $j\gg1$. We do not know whether other censored (driftless) subordinators hit the barrier in finite time. If they do, it is not clear if our proof strategy can be extended to such cases, as it relies on the closed formula for the potential kernel, which is only available for the stable case.
\end{enumerate}
\end{remark}

\subsection{\texorpdfstring{Probabilistic representations of \texorpdfstring{$I^\beta_0$}{I0beta} }{Probabilistic representations}}

Firstly, we  prove that $I^\beta_0$  is equal to the  potential of the semigroup of the censored process $S^c$. Secondly, we give a representation of $I^\beta_0$ in terms of  products of \textit{i.i.d.} beta-distributed random variables. 

\begin{proposition}\label{thm:FKform}
If $g$ satisfies the assumption in Theorem \ref{thm:existence} or if $g\in B[0,T]$,  it holds that $I^\beta_0 g\in C_\infty(0,T]$, and for all $x\in(0,T]$ we have the identity
\begin{equation}
I^\beta_0 g(x) =  \mathbb E_x\bigg[\int_0^{\tau_\infty} g(S^c_s)\,{\rm d}s\bigg].
\label{eq:thisthm}
\end{equation}
\end{proposition}
\begin{proof}  For $g\ge 0$ we justify the following  equalities
\begin{align}
\mathbb E_x\bigg[\int_0^{\tau_\infty} \hspace{-8pt}g(S^c_s)\,{\rm d}s\bigg]
&=\sum_{j=0}^\infty\mathbb E_x\bigg[\int_{0}^{\tau_{j\hspace{-0.5pt}+\hspace{-0.5pt}1}-\hspace{0.5pt}\tau_j}\hspace{-10pt}g(S^c_{\tau_j+s})\,{\rm d}s\bigg]\nonumber\\
&=\sum_{j=0}^\infty\mathbb E_x\bigg[\int_{0}^{E_{j\hspace{-0.5pt}+\hspace{-0.5pt}1}(S^c_{\tau_j})} \hspace{-7pt}g(S^c_{\tau_j}+ S^{j+1}_s)\,{\rm d}s\bigg]\nonumber\\
&=\sum_{j=0}^\infty\mathbb E_x\bigg[\mathbb E\bigg[\int_{0}^{E_{j\hspace{-0.5pt}+\hspace{-0.5pt}1}(S^c_{\tau_j})} \hspace{-7pt}g(S^c_{\tau_j}\hspace{-0.7pt}+\hspace{-0.7pt}S^{j+1}_s)\,{\rm d}s\,\Big|\,S^c_{\tau_j}\bigg]\bigg]\nonumber\\
&=\sum_{j=0}^\infty\mathbb E_x \bigg[ J^\beta_0 g(S^c_{\tau_j})\bigg]=\sum_{j=0}^\infty\mathcal K^j J^\beta_0 g(x)=I^\beta_0g(x).\label{sum of expectation of S^c_{tau_j} equals I^beta_0}
\end{align}

The first equality is an application of 
 Tonelli's Theorem and a simple change of variables; the second follows from \eqref{eq:com}; the third is due to the law of total expectation; the fourth is due to  the independence of  $S^{j+1}$ and  $S^c_{\tau_j}$ along with the known identity \eqref{eq:Jbetaintro}   (which is a straightforward consequence of \cite[Eq. (1.38)]{Bogdan}); the fifth follows from  Lemmata \ref{lem:1}-(i) and \ref{thm:kseries}; the last follows the definition of $I^\beta_0$.   If $g\in B[0,T]$, recalling that $J^\beta_0 |g|(x)\le \sup\big\{|g(y)|:y\in[0,x]\big\}\hspace{0.8pt}x^\beta/\hspace{0.5pt}\Gamma(\beta\hspace{-0.7pt}+\hspace{-0.7pt}1)$  and $J^\beta_0 g\in C[0,T]$, by Lemma \ref{thm:kseries} we know that $\sum_{j=0}^\infty\mathcal K^j J^\beta_0 g\in C_\infty(0,T]$, and that we can apply Fubini's Theorem to the above equalities. If $g$ satisfies the condition in Theorem \ref{thm:existence}, then Theorem \ref{thm:existence} proves  $I^\beta_0 g\in C_\infty(0,T]$ and  justifies the application of Fubini's Theorem.
\end{proof} 
 
\begin{remark}\label{rmk:sereqfk}$ $
\begin{enumerate}[(i)]
\item The above proof provides the following intuition for  how  $\DCs\beta$ extends the memory effect of $\DRL\beta$. Rewrite the right-hand side of \eqref{eq:thisthm} as 
\begin{equation}
\mathbb E\bigg[\int_0^{E_1(x)} g(x\hspace{-0.7pt}+\hspace{-0.7pt}S^1_s)\,{\rm d}s\bigg]+\mathbb E_x\bigg[\int_{\tau_1}^{\tau_\infty} g(S^c_s)\,{\rm d}s\bigg].
\label{eq:memoryint}
\end{equation}
Then the first term in \eqref{eq:memoryint} weights the \emph{past} values of $g$ on the interval $(x+S^1_{E_1(x)-},x]$, just like \eqref{eq:Jbetaintro} in the Caputo case (note that \eqref{eq:Jbetaintro} takes a slightly different form, just because there we assume $S^1$ starts from $x$ instead of $0$). Meanwhile, the second term proceeds on the interval $(0,x+S^1_{E_1(x)-}]$ according to the censored process. This second term can be simplified further using the the distribution of $S^c_{\tau_j}$ and written  in terms of products of \textit{i.i.d.} beta-distributed random variables, as we will see in Proposition \ref{thm:SR}.
\item  Proposition \ref{thm:FKform} proves that $\mathbb E_x\big[\int_0^{\tau_{\infty}} (S^c_s)^{\alpha}\,\dd s\big]$
 equals the right-hand side of \eqref{eq:seriesforalpha}. If $\alpha>-\beta$,  it  is finite and yields Theorem \ref{thm:main7}    (by letting $\alpha=0$). If $\alpha\le -\beta$, then  it is infinite by Remark \ref{rem:sumk_js}. In contrast, $\mathbb E\big[\int_0^{E_1(x)} (x\hspace{-0.7pt}+\hspace{-0.7pt}S^1_s)^\alpha\,{\rm d}s\big]<\infty$ for all $\alpha>-1$. 
\end{enumerate}
\end{remark}


\begin{definition}\label{def:stoc_rep} 
For any $x>0,$ we define the  $(0,x]$-valued discrete time Markov process  $X_j =x\prod_{i=1}^jB_i$, $j\in \mathbb N$, with $X_0=x$,  and $\{B_i\}_{i\in\mathbb N}$  being an \textit{i.i.d.}  collection of beta-distributed random variables on $(0,1)$ with parameters $(1\hspace{-0.7pt}-\hspace{-0.7pt}\beta,\beta)$.
\end{definition}

\begin{proposition}\label{thm:SR}
Under the assumption of Proposition \ref{thm:FKform},  $S^c_{\tau_{j}}$ equals $X_j$ in law for each $j\in\mathbb N$, and  $I^\beta_0$ allows the probabilistic series representation
\begin{equation}
 I^\beta_0 g(x) =\sum_{j=0}^\infty\mathbb E_x\big[J^\beta_0g(X_j)\big],\quad x\in(0,T]. 
\label{eq:solSR}
\end{equation} 
\end{proposition}
\begin{proof}  
We use induction to prove that $k_j(x,\,\cdot\,)$ is the density   of  $X_j\,\big|\,X_0\!=\!x$, for each $j\in\mathbb N$. The case when $j=1$ is clear. By the independence of $B_{j+1}$ and $X_j$, and the induction hypothesis
 \begin{equation*}
\mathbb P_x [X_{j+1}\le r]
=\mathbb P_x [X_jB_{j+1}\le r]
=\mathbb P_x\Big[B_{j+1}\le \frac r{X_j}\Big]
=\int_0^xk_j(x,s)\,\mathbb P \Big[B_{j+1}\le\frac rs\Big]{\rm d}s.
\end{equation*}
Then, recalling that $k_1(1,\,\cdot\,)$ is the density of $B_{j+1}$ and that it is supported on $(0,1)$,
\begin{align*}
\frac{\rm d}{{\rm d}r}\mathbb P_x [X_{j+1}\le r]
&=\int_0^xk_j(x,s)\,\frac{\rm d}{{\rm d}r}\mathbb P \Big[B_{j+1}\le\frac rs\Big]{\rm d}s\\
&=\int_r^xk_j(x,s)\,\frac1sk_1\Big(1,\frac rs\Big){\rm d}s\\
&=\int_r^xk_j(x,s)\,k_1(s,r){\rm d}s.
\end{align*}
Now apply Remark \ref{rmk:intto1}  and we know that $k_{j+1}(x,\,\cdot\,)$ is the density of $X_{j+1}\,\big|\, X_0\!=\!x$.  The induction step is now complete. By Lemma \ref{lem:1}-(i), we know that $S^c_{\tau_{j}}$ equals $X_j$ in law for each $j\in\mathbb N$, therefore the left-hand side of \eqref{sum of expectation of S^c_{tau_j} equals I^beta_0} equals the right-hand side of \eqref{eq:solSR}.
\end{proof}

\begin{remark}
Clearly Proposition \ref{thm:SR} can be strengthened into that $\{S^c_{\tau_{j}}\}_{j\in\mathbb N}$ equals $\{X_j\}_{j\in\mathbb N}$ in law. It is also clear that the series in \eqref{eq:solSR} equals $ \Gamma(1\hspace{-0.7pt}-\hspace{-0.7pt}\beta)\sum_{j=1}^{\infty}\mathbb E_x\big[(X_j)^\beta g(X_j)\big]$, since we know $J^\beta_0g (x)=\Gamma(1\hspace{-0.7pt}-\hspace{-0.7pt}\beta)\mathbb E_x\big[(X_1)^\beta g(X_1)\big]$ from Remark \ref{K = Jbeta x^-beta}.
\end{remark}

\subsection{Laplace transform of \texorpdfstring{$\tau_\infty$}{tauinf}}  We recall some definitions adapted to our setting that relate to Feller semigroups \cite{BSW13}. A collection of operators $P=\{P_s\}_{s\ge 0}$ is said to be a \emph{semigroup}  on a Banach space $X$ if $P_s:X\to X$ is   bounded and linear for any $s>0$,  $P_sP_t=P_{s+t}$ for all $t,s>0$, and $P_0$ is the identity operator. We say that $P$ is \emph{strongly continuous on} $\mathcal L\subseteq X$ if for   any $f\in \mathcal L$,   $P_sf\to f$ in $X$ as $s\to 0$, and that $P$ is \emph{strongly continuous} if $P$ is strongly continuous on $X$. We define the \textit{generator of} $P$ to be the pair $(\mathcal G,\mathcal D)$, where $\mathcal D:=\{f \in X:  \mathcal Gf \;\text{converges in}\; X\}$ with $\mathcal Gf:=\lim_{s\rightarrow 0}(P_sf\hspace{-0.7pt}-\hspace{-0.7pt}f)/s$, and we call $\mathcal D$ the \textit{domain of the generator of $P$}. Moreover, $P$ is said to be a \emph{positivity preserving contraction} on $X \subseteq B[0,T]$ if $0\le P_sf\le 1$ for any $s>0$ and $f\in X$ such that $0\le f\le 1$. Finally, a semigroup $P$ on $X=C_\infty(0,T]$ is said to be a \emph{Feller semigroup} if it is a strongly continuous positivity preserving contraction  on $X$. We recall \cite[Page 15]{BSW13} that
there exists a one-to-one correspondence between Feller semigroups and  Markov processes $\{Y_s\}_{s\ge0}$ such that  $f(\,\cdot\,)\mapsto P_sf(\,\cdot\,):=\mathbb E\big[f(Y_s)\,|\,Y_0\!=\cdot\,\big] $, $s\ge0$, is a Feller semigroup \cite[Page 15]{BSW13}.
 
\begin{proposition}\label{thm:gen}
For any $T>0$, the censored decreasing $\beta$-stable process $S^c$ induces a Feller semigroup on $C_\infty(0,T]$,  whose generator is
\[
\left(-\DCs\beta,  I^\beta_0 C_\infty(0,T]\right).
\] 
\end{proposition}
\begin{proof}
Let $P^c_tf(x)=\mathbb E_x\big[f(S^c_t)\big]$   for $t\ge0$ and $f\in B[0,T]$ (defining $S^c_t=\partial$ for all $t>0$ if $S^c_0=0$). Then $P^c=\{P^c_t\}_{t\ge0}$ is a positivity preserving contraction semigroup on $B[0,T]$, due to $S^c$ being a Markov process. We denote by  $\mathcal L^c$   the largest subset of $B[0,T]$    on which $ P^c$   is strongly continuous and by $\mathcal D^c$ the domain of the generator of $P^c$. First we prove   $\mathcal L^c\supseteq  C_\infty(0,T]$. For $f\in C_\infty(0,T]$, let $\tilde f(x):=f\big(\!\max\{x,0\}\big)$ for any $x\in(-\infty,T]$, and compute
\begin{align*}
\big|P^c_tf(x)-f(x)\big|&\le \Big|\mathbb E_x\big[\mathbf 1_{\{t<\tau_1\}}(f(S^c_t)\hspace{-0.7pt}-\hspace{-1.3pt}f(x))\big]\Big| +\mathbb E_x\big[\mathbf 1_{\{t\ge\tau_1\}}|f(S^c_t)\hspace{-0.7pt}-\hspace{-1.3pt}f(x)|\big]\\
&\le \Big|\mathbb E\big[\mathbf 1_{\{t<E_1(x)\}}(\tilde f(x\hspace{-0.7pt}+\hspace{-0.7pt}S^1_t)-\tilde f(x))\big]\Big| +2\|f\|_{C[0,x]}\mathbb P\big[t\ge E_1(x)\big]\\
&\le \Big|\mathbb E\big[\tilde f(x\hspace{-0.7pt}+\hspace{-0.7pt}S^1_t)-\tilde f(x)\big]\Big|+3\|f\|_{C[0,x]}\mathbb P\big[t\ge E_1(x)\big],
\end{align*}
where the first summand vanishes uniformly in $x$ as $t\to 0$  because $S^1$ is a Feller process on $\big\{g\in C(-\infty,T]: \lim_{x\to -\infty}g(x)=0\big\} $ \cite{BSW13}. Meanwhile for the second summand, for any $\varepsilon>0$ we can choose $\delta>0$ so that $\|f\|_{C[0,\hspace{0.6pt}x]}\le \varepsilon$ for all $x\in(0,\delta]$ and then we choose $\tilde t$ small so that
$$\mathbb P\big[t\ge E_1(x)\big]=\mathbb P[x+S^1_t \le 0]\le\mathbb P[\delta\le -S^1_t]\le \varepsilon,\quad \text{for all }x\ge\delta  \text{ and } t\le \tilde t,$$
so for all  $t\le\tilde t$
\begin{equation}
3\|f\|_{C[0,x]}\mathbb P\big[t\ge E_1(x)\big]\le\left\{ \begin{aligned}
& 3\varepsilon, &0\le x\le \delta,\\
& 3\varepsilon\|f\|_{C[0,T]}, &\delta <x\le T.
\end{aligned}\right.
\end{equation}
Therefore we have proved the strong continuity of $P^c$ on $C_\infty(0,T]$ and thus $C_\infty(0,T]\subseteq\mathcal L^c$. We now prove that  $C_\infty(0,T]$ is invariant under $P^c$.   The key ingredients are Theorem \ref{thm:main7} and Proposition \ref{thm:FKform}, which prove that  $ I^\beta_0 $ equals the potential $ \int_0^\infty P^c_s\dd{s}$ and is a bounded operator from $B[0,T]$ to $C_\infty(0,T]$. Then  \cite[Theorem 1.1']{dynkin} implies $\mathcal D^c = I^\beta_0 \mathcal L^c$, and we have
 \[ 
  I^\beta_0 C_\infty(0,T]  \subseteq   I^\beta_0 \mathcal L^c\subseteq    I^\beta_0 B[0,T] \subseteq C_\infty(0,T].
 \]
  Because Stone–Weierstrass Theorem and Example  \ref{ex:harm} prove  that $  I^\beta_0 C_\infty(0,T]$ is dense in $C_\infty(0,T]$,   by \cite[Property 1.3.B]{dynkin} and the above inclusions  we obtain  $\mathcal L^c= C_\infty(0,T]$. Because $P^c\mathcal L^c\subseteq \mathcal L^c$ \cite[Property 1.3.A]{dynkin}, we have proved that  $P^c$ is a Feller semigroup on $C_\infty(0,T]$. Since its  potential is $ I^\beta_0 $ and a bounded potential determines the generator \cite[Theorem 1.1']{dynkin}, Theorem \ref{thm:existence} implies that the  generator of $P^c$ is $\big(\hspace{-1.2pt}-\hspace{-0.7pt}\DCs\beta,  I^\beta_0 C_\infty(0,T]\big)$.
\end{proof}
\begin{remark}$ $
\begin{enumerate}[(i)]
\item As a corollary of Proposition \ref{thm:gen}, for any $b>0$ and $f\in I^\beta_0C_\infty(0,b]$, $u(t,x)=\mathbb E_x\big[f(S^c_t)\big]$ is the unique solution  satisfying the conditions \cite[a, b and c of Theorem 1.3]{dynkin} to the evolution equation (with $\DCs\beta$ acting on the spatial variable)
\begin{equation*}
\frac{{\rm d}}{{\rm d}t} u=-\DCs\beta u,\hspace{2pt}(t,x)\in \Rp \times (0,b];\hspace{7pt} u(t,0)=0,\hspace{2pt} t\in\Rp;\hspace{7pt}u(0,x)=f(x),\hspace{2pt} x\in (0,b].
\end{equation*}
\item  Note that Proposition \ref{thm:gen} relies  crucially on  Theorem \ref{thm:existence}. Also, it is not a special case of \cite[Theorem 3.3]{INW66EE}, primarily because the latter needs the assumption that $S^c$ is a Feller process, which is not clear from the INW construction.
\end{enumerate}
\end{remark}

We are now ready to prove a Mittag-Leffler-type representation for $\mathbb E_x\big[e^{\lambda \tau_\infty}\big]$,  whose analogue in the Caputo setting is the probabilistic identity 
\begin{equation}\label{eq:capmittag}
    \mathbb E_x\big[e^{\lambda  \tau_1}\big]=\sum_{j=0}^\infty \frac{\lambda^j x^{\beta j}}{\Gamma(j\beta+1)},
\end{equation}   first proved in \cite{B71}. Our proof follows the approach of \cite[Corollary  5.1]{HK16} to \eqref{eq:capmittag}. This approach allows one to solve exit problems by computing the Laplace transform of the lifetime of a killed Markov process  when one knows the analytical solution to the   resolvent equation   $-\mathcal G u = \lambda u+g$ ($\mathcal G$ being the generator of the process). In our case, the analytical solution is given by Proposition \ref{lem:inhomlin}.

\begin{theorem}\label{thm:exptau}
For every $\lambda<0,\,T>0$ and $g\in C[0,T]$,
\begin{equation}\label{eq:resleq}
\mathbb E_x\bigg[\int_0^{\tau_\infty} e^{\lambda s}g(S^c_s)\dd{s}\bigg]=\sum_{j=0}^\infty\lambda^{j}(I^\beta_0)^{j+1}g(x), \quad x\in(0,T].
\end{equation}
Moreover, the Mittag-Leffler-type series \eqref{solution to eigenvalue problem} ($u_0=1$) equals $ \mathbb E_x\big[e^{\lambda \tau_\infty}\big]$ for all $\lambda\in\mathbb R$ and $x>0$.
\end{theorem}

\begin{proof}
For the first claim, if $g\in C_\infty(0,T]$, recalling \cite[Theorem 1.1]{dynkin}, the equality  \eqref{eq:resleq} holds by Propositions \ref{thm:gen} and  \ref{lem:inhomlin}, as both sides of it are the unique  solution in $C_\beta[0,T]$  to the resolvent equation 
\begin{equation*}\label{eq:resleq2}
\DCs\beta u =\lambda u +g,\quad  u(0)=0,\quad g\in C_\infty(0,T],
\end{equation*}
where we used $I_0^\beta C_\infty(0,T]\subseteq  C_\beta[0,T]$ given by Lemma \ref{lem:sol_cts}.
Now, for any $g\in C[0,T]$, take $g_n\in C_\infty(0,T] $ so that $g_n\to g$ uniformly  on every compact subset of $(0,T]$ and $\sup\limits_n\|g_n\|_{C[0,T]}<\infty$. Fix $x\in(0,T]$, then for any $s>0$,  
\[
\mathbb E_x\big[g_n(S^c_s)\big]\to \mathbb E_x\big[g(S^c_s) \big],\;\text{as}\;n\to\infty,
\]
by Dominated Convergence Theorem. Then another application of Dominated Convergence Theorem, with the dominating function $\sup\limits_n\|g_n\|_{C[0,T]}e^{\lambda s}$, yields  
\[
\int_0^\infty  e^{\lambda s}\mathbb E_x\big[g_n(S^c_s)\big] \,{\rm d}s \to\int_0^\infty e^{\lambda s}  \mathbb E_x\big[g(S^c_s) \big] \,{\rm d}s,\;\text{as}\;n\to\infty.
\]
On the other hand, by the continuous dependence in Proposition \ref{lem:inhomlin}  (let the $\alpha$ there be e.g. $\beta/2$), 
\[
\sum_{j=0}^\infty\lambda^{j}(I^\beta_0)^{j+1}g_n(x)\to \sum_{j=0}^\infty\lambda^{j}(I^\beta_0)^{j+1}g(x),\;\text{as}\;n\to\infty.
\]
Therefore we have proved  \eqref{eq:resleq} for all $g\in C[0,T]$.

To prove the second claim for $\lambda<0$, in \eqref{eq:resleq} let $g=\lambda$, so that on the left-hand side
\begin{align*}
 \mathbb E_x\bigg[\int_0^{\tau_\infty }e^{\lambda s}\lambda\dd{s}\bigg]=\lambda \frac{\mathbb E_x[ e^{\lambda \tau_\infty }]-1}{\lambda} =
\mathbb E_x\big[ e^{\lambda \tau_\infty }\big]-1,
\end{align*}
and on the right-hand side
\begin{align*}
\sum_{j=0}^\infty \lambda^{j+1}(I^\beta_0)^{j+1}1(x) =\sum_{j=0}^\infty \lambda^{j}(I^\beta_0)^j1(x)-1.
\end{align*}
Hence by \eqref{The formula for I0beta^n} (with $\alpha=\beta$) we have proved the second claim for $\lambda\le0$ (with $\lambda=0$ being a trivial case), which combined with Lemma \ref{bound of the product} allows us to compute the moments $\mathbb E_x\big[(\tau_\infty)^j\big]\;(j\in \mathbb N)$ by differentiating $\mathbb E_x\big[ e^{\lambda \tau_\infty }\big]$ in $\lambda$ ($\lambda\le0$) for $j$ times. Those moments are displayed in \eqref{eq:momtauinf}, and in turn they allow us to prove the second claim also for $\lambda>0$, since we have \begin{equation*}
\mathbb E_x\big[ e^{\lambda \tau_\infty }\big]=\sum_{j=0}^\infty \frac{\lambda^j}{j!}\mathbb E_x\big[(\tau_\infty)^j\big],\quad\lambda,x>0,
\end{equation*}
where the series in the right-hand side converges to \eqref{solution to eigenvalue problem} by Lemma \ref{bound of the product}.
\end{proof}

\begin{corollary}\label{Cu}
 For any $\lambda<0$, the Mittag-Leffler-type series \eqref{solution to eigenvalue problem} is completely monotone.   More generally, for any Bernstein function $f$  the  series \eqref{solution to eigenvalue problem} composed with $f^{1/\beta}$ is completely monotone.  
\end{corollary}
\begin{proof} 
 Denote by $\mu_1$ the law of $\tau_\infty$ for $S^c_0=1$ and by $M_\lambda(x)$  the series \eqref{solution to eigenvalue problem} ($u_0=1$). Then 
\[  
 M_\lambda(x)=M_{\lambda x^\beta}(1)=\mathbb E_1\big[e^{(\lambda x^{\beta}) \tau_\infty}\big]=\int_{[0,\infty)}e^{\lambda x^\beta y}\mu_1({\rm d}y),
\]
where the second equality is due to Theorem \ref{thm:exptau}. The second claim now follows from \cite[Theorem 3.7]{SV12} because $M_\lambda$  composed with $f^{1/\beta}$ equals 
\[  
x\mapsto \int_{[0,\infty)}e^{\lambda f(x) y}\mu_1({\rm d}y), 
\] the composition of $x\mapsto\hspace{-1pt}\int_{[0,\infty)}e^{\lambda x y}\mu_1({\rm d}y)$ (which is completely monotone \cite[Theorem 1.4]{SV12}) with the Bernstein function $f$. The first claim corresponds to the Bernstein function $f(x)=x^\beta$.
\end{proof}

\begin{remark}\label{rmk:exptau}  The proof of \eqref{eq:exptauinf_intro} in Theorem \ref{thm:exptau} suits well our IVP theory, but is rather indirect, especially when compared to the standard proofs of \eqref{eq:capmittag}. Below we discuss the issues encountered with adapting those standard proofs to our censored setting, suggesting that our strategy is quite efficient.
\begin{enumerate}[(i)]
   
    \item  A simple proof of \eqref{eq:capmittag} follows the known \textit{direct} evaluation of the moments $\mathbb E_x\big[(\tau_1)^n\big]$ $= x^{\beta n}n! /\Gamma(\beta n\hspace{-0.7pt}+\hspace{-0.7pt}1)$, as then one can write  $\mathbb E_x\big[e^{\lambda\tau_1}\big]=\sum_{n=0}^\infty\lambda^n\mathbb E_x\big[(\tau_1)^n\big]/n!$ for $\lambda>0$ and check the convergence \cite{BKS96}. In Theorem \ref{thm:exptau} after proving that  \eqref{eq:exptauinf_intro} holds for all $\lambda\le0$, we obtain the closed form for the moments  $\mathbb E_x\big[(\tau_\infty)^n\big]= x^{\beta n}n!C_n$ where
    \begin{equation}\label{eq:momtauinf}
   C_n=\prod_{j=1}^n\left(\frac{\Gamma(1+j\beta)}{\Gamma(j\beta\hspace{-0.7pt}+\hspace{-0.7pt}1\hspace{-0.7pt}-\hspace{-0.7pt}\beta)}-\frac1{\Gamma(1\hspace{-0.7pt}-\hspace{-0.7pt}\beta)}\right)^{-1}, \;n\in\mathbb N,
    \end{equation}
    and those moments in turn allow us to prove the case when  $\lambda>0$. But obtaining  those moments by a direct evaluation is not easy.  Indeed, although one can directly compute 
    $ \mathbb E_x\big[(\tau_\infty)^n\big]=\lim\limits_{j\to\infty} \mathbb E_x\big[(\tau_j)^n\big]=x^{n\beta}n! \lim\limits_{j\to\infty} C_{n,j}$ (see Appendix \ref{app:C_j_n} for details), where
    $$ 
   C_{n,j}=\sum_{n_1+...+n_j=n} \frac{ \Gamma(1\hspace{-0.7pt}-\hspace{-0.7pt}\beta)^{1-j} }{ \Gamma( \beta n_j+1)  } \prod_{i=1}^{j-1}\frac{\Gamma\big(1\hspace{-0.7pt}-\hspace{-0.7pt}\beta\hspace{-0.7pt}+\hspace{-0.7pt}\beta\sum_{l=0}^{i-1}n_{j-l}\big)}{\Gamma\big(1+\beta\sum_{l=0}^{i}n_{j-l}\big)}  , \;n\in\mathbb N,$$
a direct proof of   $C_{n,j}\to C_n$ as $j\to\infty$ appears hard (nonetheless, Theorem \ref{thm:exptau} can serve as an indirect proof).
 \item There exist  several proofs of \eqref{eq:capmittag}  whose key steps rely   on the Laplace transform of $-S^1_1$ or infinitely divisible random variables (see \cite[Proposition 1.a]{B71}, \cite[XIII.8 Example (b), p. 453]{FII}, \cite[Theorem 2.10.2]{Z86},  \cite[Lemma 3.4]{JK16} and  \cite[6.6 (ii)]{K06}), and therefore these proofs do not apply to  $S^c$. 
  Also, the recent approach in \cite[Section 4.1]{BSV19}, which characterises  complete monotonicity of Kilbas–Saigo functions, poses many challenges to being adapted to our censored setting (in particular obtaining an appropriate analogue of \cite[Lemma 4.1]{BSV19}).
     

\item  We could not apply the strategy  used by the proof of Proposition \ref{thm:FKform} to prove \eqref{eq:exptauinf_intro}, as one would need a closed form of the complicated expectations $\mathbb E_x\big[\big( E_\beta(\lambda (S^c_{\tau_j})^\beta)-1\big)e^{\lambda \tau_j}\big]$ for $j\in\mathbb N$, where $E_\beta(x)=\sum_{n=0}^\infty y^n/\Gamma(\beta n\hspace{-0.7pt}+\hspace{-0.7pt}1)$  
(not to be confused with the inverse stable subordinator $E_j(y)$ defined in \eqref{eq:id}).

\item The simple observation that the relaxation equation \eqref{linear equation} rewrites as the Dirichlet problem $(-\DRL\beta\hspace{-0.7pt}+\hspace{-0.7pt}q)u=0$, $u(0)=1$  with the unbounded potential $q=\lambda+x^{-\beta}/\Gamma(1\hspace{-0.7pt}-\hspace{-0.7pt}\beta)$, suggests combining our IVP theory with potential theoretic techniques to prove \eqref{eq:exptauinf_intro}. Namely, show that the gauge function $u(x)=\mathbb E\big[\exp\big\{\int_0^{E_1(x)}q(x\hspace{-0.7pt}+\hspace{-0.7pt}S^1_t)\,\dd t\big\}\big]$ solves  \eqref{linear equation}. Although this approach appears feasible (as Proposition \ref{thm:FKform} should prove gaugeablility by \cite[Theorem 2.9.ii]{Bogdan}), we expect it to be more involved than our strategy. The main  reason is that, to show that the gauge function equals $\mathbb E_x\big[\exp\{ \lambda\tau_\infty\}\big]$, one would need to derive a relationship between the INW construction of $S^c$ and the coefficient $x^{-\beta}/\Gamma(1\hspace{-0.7pt}-\hspace{-0.7pt}\beta)$. Such relationship seems difficult to derive without the knowledge of the generator of $S^c$ (cf. \cite[Theorem 2.1]{BBC03}),  on the other hand this knowledge is already enough for our strategy to work. Another   reason is that one would have to employ   general results from the potential theory of Feynman–Kac semigroups (cf. \cite[Chapter 3]{CZ12}), only making the proof more technical. 


\end{enumerate}

\end{remark}

\begin{remark}\label{rmk:Btau}$ $
\begin{enumerate}[(i)]
\item Let $\tau_1(t)$, $ \tau_\infty(t)$ and $B$ denote $E_1(t)$ (i.e. the inverse stable subordinator), $\tau_\infty\,|\,S^c_0\!=\!t$ and an independent Brownian motion, respectively.  It is known (e.g. \cite{MS08}) that the Caputo time-fractional   diffusion equation $\DRL\beta \big[u\hspace{-0.7pt}-\hspace{-0.7pt}u(0)\big]=\Delta u/2$ is solved by the fractional kinetic process $\{B_{\tau_1(t)}\}_{t\ge0}$. This process is   well-known as sub-diffusion since \eqref{eq:id} implies $\mathbb E \big[|B_{\tau_1(t)}|^2\big]=\mathbb E\big[\tau_1(t)\big]=t^\beta /\Gamma(\beta\hspace{-0.7pt}+\hspace{-0.7pt}1)$, which is slower than normal diffusion $\mathbb E\big[|B_t|^2\big]=t$.    Our work suggests that  the censored  counterpart  $\DCs\beta  u=\Delta u/2$ is solved by a new sub-diffusion process $\{B_{\tau_\infty(t)}\}_{t\ge0}$. Indeed, Theorem \ref{thm:main7} shows that $\mathbb E\big[|B_{\tau_\infty(t)}|^2\big]=c t^\beta\;(c>0)$, and we expect the time-fractional evolution equation $\DCs\beta u=\mathcal G u+g$, $u(0)=\phi$ to have a unique (generalised) solution
\[u(t,x)=\mathbb E\bigg[\phi(X_{\tau_\infty})+\!\!\int_0^{\tau_\infty} \!\!\!\!g(S^c_s,X_s)\,{\rm d}s\;\bigg|\; (S^c_0,X_0)\!=\!(t,x)\bigg],\quad(t,x)\in(0,T]\times\mathbb R^d,\]
where $\phi\in\text{Dom}(\mathcal G),\;g\in C\big([0,T]\times\mathbb R^d\big)$, and $\big(\mathcal G,\text{Dom}(\mathcal G)\big)$ is the generator of any Feller process $X$ on $\mathbb R^d$ independent of $S^c$. (We think the last claim can be proved using the techniques from \cite{DTZ19,HKT18}, in the light of Proposition \ref{thm:gen}.) Let us also mention that to find strong solutions to $\DCs\beta u=\Delta u/2$, Theorem \ref{thm:exptau} opens up the possibility of applying the spectral decomposition method of \cite{CMN12}.
\item Although both $B_{\tau_1}$ and $B_{\tau_\infty}$ spread like $t^\beta$,   their respective Fourier modes model entirely different relaxation regimes. Namely, for any $\lambda\in\mathbb R^d$ we have,
\begin{align*}
  \mathbb E \big[\exp\big\{i\lambda\cdot B_{\tau_1(t)}\big\}  \big]&= \mathbb E \big[\exp\big\{\!-\!|\lambda|^2 \tau_1(t)/2\big\}\big]\asymp t^{-\beta}, \\
  \mathbb E \big[\exp\big\{i\lambda\cdot B_{\tau_\infty(t)}\big\}\big]&=\mathbb E \big[\exp\big\{\!-\!|\lambda|^2 \tau_\infty(t)/2\big\}\big]\asymp t^{-1-\beta},
  \end{align*}
  by \eqref{eq:capmittag}, Theorem \ref{thm:exptau} and Remark \ref{rmk:caprelax}-(i). Here $f\asymp g$ means $C^{-1}g\le f\le Cg$ for some constant $C>1$.
  
 \item There are several interesting questions revolving around $B_{\tau_\infty}$, a new example of anomalous diffusion. For instance, it is natural to ask if there  is a continuous-time-random-walk-type framework which scales to $B_{\tau_\infty}$, as is the case for $B_{\tau_1}$ \cite{BC11,MS08} and several other anomalous diffusion processes \cite{BMS04,WSJMS12} related to Caputo derivatives. Moreover, it is challenging and interesting to study the difference  in path regularity  between $B_{\tau_\infty}$ and $B_{\tau_1}$, in particular because the latter can be ``trapped" \cite{MS08}.

\item We mention that sub-diffusion and fractional relaxation equations are widely used to model anomalous (non-Debye) relaxation in dielectrics, see \cite{J02,WJMWT10,KCCT04,WSJMS12} and references therein. Their role is to provide a probabilistic theoretic explanation of the empirical (Havriliak–Negami) formula $\chi(\omega)=\big(1+(i\omega)^\alpha\big)^{-\gamma}$. (Here $\omega$ is the electric field's frequency and $\chi$ is the electric susceptibility. This formula fits well a majority of experimental data.) A typical example (Cole–Cole) is $\alpha=\beta\in(0,1)$ and $\gamma=1$, which is modelled by the sub-diffusion $B_{\tau_1}$ \cite[Page 3]{WJMWT10}. On the other hand, we expect $B_{\tau_\infty}$ to model a new regime with $\alpha=1+\beta\in (1,2)$ and $\gamma = \beta/(1+\beta)$ (by \cite[Eq. (1) and (5)]{WJMWT10}), although in the literature (e.g. \cite{KCCT04,WJMWT10}) we have not seen the parameter range $\alpha>1$.

 \end{enumerate}
\end{remark} 
 
\section*{Acknowledgement}
The authors would like to thank Professors Kai Diethelm,  Thomas Simon,   Zhen-Qing Chen, Krzysztof Bogdan and Zhi Zhou for many helpful discussions.

\appendix
\section{Proofs of elementary properties of \texorpdfstring{$J_0^\beta$}{J0beta} and \texorpdfstring{$\DRL\beta$}{DRL beta}}
\subsection{Proof of Lemma \ref{lem:prelim} } The proof consists of four parts.
\label{app:proofofelemprop}

\begin{enumerate}[(i)]
	\item Since both $u$ and $x^{\beta-1}$ are in $C\cap L^1(0,T]$,   for any $x\in(0,T],\,J^\beta_0 u(x)$ is well-defined and finite.
For $\varepsilon\in(0,\,x)$, define
\begin{equation*}
J^\beta_\varepsilon u(x)=\int_0^{x-\varepsilon}\frac{(x\hspace{-0.7pt}-\hspace{-0.7pt}r)^{\beta-1}}{\Gamma(\beta)} u(r)\,{\rm d}r.
\end{equation*}
Given $T_1\in(0,T],$ for all $x\in[T_1,\,T]$ and $\varepsilon\in(0,\,T_1)$, we have
\begin{align*}
\Big\vert J^\beta_\varepsilon u(x)-J^\beta_0 u(x)\Big\vert \le\int_{x-\varepsilon}^x\bigg\vert\frac{(x\hspace{-0.7pt}-\hspace{-0.7pt}r)^{\beta-1}}{\Gamma(\beta)}u(r)\bigg\vert\,{\rm d}r \le\frac{\varepsilon^\beta}{\beta\Gamma(\beta)}\Vert u\Vert_{C[T_1-\varepsilon,\,T]},
\end{align*}
therefore, as $\varepsilon\rightarrow0,\;J^\beta_\varepsilon u\to J^\beta_0 u$ uniformly on $[T_1,\,T]$. By Dominated Convergence Theorem, $J^\beta_\varepsilon u$ is continuous on $[T_1,\,T]$. So $J^\beta_0 u$ is also continuous on $[T_1,\,T]$, and thus on $(0,T]$. Integrability of $J^\beta_0 u$ follows by
\begin{equation*}
\begin{aligned}
\int_0^T\big\vert J^\beta_0 u(x)\big\vert\,{\rm d}x
&\le\int_0^T\int_0^x\frac{(x\hspace{-0.7pt}-\hspace{-0.7pt}r)^{\beta-1}}{\Gamma(\beta)}\big\vert u(r)\big\vert\,{\rm d}r\,{\rm d}x\\
&=\int_0^T\frac{\big\vert u(r)\big\vert}{\Gamma(\beta)}\int_r^T(x\hspace{-0.7pt}-\hspace{-0.7pt}r)^{\beta-1}\,{\rm d}x\,{\rm d}r\\
&\le\frac{T^\beta}{\Gamma(1\hspace{-0.7pt}+\hspace{-0.7pt}\beta)}\int_0^T\big\vert u(r)\big\vert\,{\rm d}r<\infty.
\end{aligned}
\end{equation*}
\item By Lemma \ref{lem:prelim}-(i), $J^\beta_0u\in C\cap L^1(0,T]$, and $J^{1-\beta}_0J^\beta_0u$ is well-defined on $ (0,T]$.  For $x\in(0,T]$,
\begin{equation}
\begin{aligned}
\label{J1-beta*Jbeta}
J^{1-\beta}_0J^\beta_0u(x)
&=\int_0^x\frac{(x\hspace{-0.7pt}-\hspace{-0.7pt}r)^{-\beta}}{\Gamma(1-\beta)}\int_0^r\frac{(r\hspace{-0.7pt}-\hspace{-0.7pt}s)^{\beta-1}}{\Gamma(\beta)} u(s)\,{\rm d}s\,{\rm d}r\\
&=\int_0^xu(s)\int_s^x\frac{(x\hspace{-0.7pt}-\hspace{-0.7pt}r)^{-\beta}}{\Gamma(1-\beta)}\frac{(r\hspace{-0.7pt}-\hspace{-0.7pt}s)^{\beta-1}}{\Gamma(\beta)}\,{\rm d}r\,{\rm d}s\\
&=\int_0^xu(s)\,{\rm d}s,
\end{aligned}
\end{equation}
where the second identity is due to Fubini's Theorem. Therefore $J^{1-\beta}_0J^\beta_0u\in C^1(0,T]$ and $\DRL\beta J^\beta_0u=u$.

\item  The ``only if\," is due to Lemma \ref{lem:prelim}-(ii) and \eqref{J1-beta*Jbeta} as well as the assumption that $g\in L^1(0,T]$.
For the ``if\,", we use  Lemma \ref{lem:prelim}-(ii) to get $\DRL\beta u=g=\DRL\beta J^\beta_0g$,  so $\DRL\beta [u\hspace{-0.7pt}-\hspace{-0.7pt}J^\beta_0g]=0$. By the definition of $\DRL\beta$, we know   that $J^{1-\beta}_0[u\hspace{-0.7pt}-\hspace{-0.7pt}J^\beta_0g]$ is constant. By \eqref{J1-beta*Jbeta} we know   that $\lim_{x\rightarrow0}J^{1-\beta}_0J^\beta_0g(x)=0$, and  by assumption $\lim_{x\rightarrow0}J^{1-\beta}_0u(x)=0$. 
Therefore $J^{1-\beta}_0[u\hspace{-0.7pt}-\hspace{-0.7pt}J^\beta_0g]$ must be 0. Conclude with Lemma \ref{lem:prelim}-(ii) which proves $u-J^\beta_0g=\DRL{1-\beta}J^{1-\beta}_0[u\hspace{-0.7pt}-\hspace{-0.7pt}J^\beta_0g]=\DRL{1-\beta}0=0$. 

\item  For $u\in C[0,T]$, we  have  $| J^{1-\beta}_0u(x)|\le\|u\|_{C[0,T]}x^{1-\beta}/\Gamma(2\hspace{-0.7pt}-\hspace{-0.7pt}\beta)\to 0$ as $x\to 0$. Then the   ``if\,"   of Lemma \ref{lem:prelim}-(iii) applies, giving $u=J^\beta_00=0$.\hfill$\square$
\end{enumerate}

\subsection{Proof of H\"older regularity of \texorpdfstring{$J_0^\beta g$}{J0beta g}}
\label{sec: Holder regularity of J0beta g}

For any interval $\Omega\subseteq\mathbb R$, we denote by $L^\infty(\Omega)$  the essentially bounded functions in $L^1(\Omega)$.

\begin{lemma}
\label{lem: Holder regularity of J0beta g}
If $g\in L^\infty(0,T]$, then $J_0^\beta g\in C^{0,\beta}[0,T]$ with $J_0^\beta g(0)$ being 0 and a H\"older constant being $2\Vert g\Vert_{L^\infty(0,T]}/\Gamma(1\hspace{-0.7pt}+\hspace{-0.7pt}\beta)$.
\end{lemma}

\begin{proof}
For $0<x<x\hspace{-0.7pt}+\hspace{-0.7pt}h\le T$,  the difference $J_0^\beta g(x\hspace{-0.7pt}+\hspace{-0.7pt}h)-J_0^\beta g(x)$   equals
\begin{equation*}
\begin{aligned}
&\int_0^{x+h}\frac{(x\hspace{-0.7pt}+\hspace{-0.7pt}h\hspace{-0.7pt}-\hspace{-0.7pt}r)^{\beta-1}}{\Gamma(\beta)}g(r)\,{\rm d}r-\int_0^x\frac{(x\hspace{-0.7pt}-\hspace{-0.7pt}r)^{\beta-1}}{\Gamma(\beta)}g(r)\,{\rm d}r\\
=&\int_0^x\frac{(x\hspace{-0.7pt}+\hspace{-0.7pt}h\hspace{-0.7pt}-\hspace{-0.7pt}r)^{\beta-1}-(x\hspace{-0.7pt}-\hspace{-0.7pt}r)^{\beta-1}}{\Gamma(\beta)}g(r)\,{\rm d}r+\int_x^{x+h}\frac{(x\hspace{-0.7pt}+\hspace{-0.7pt}h\hspace{-0.7pt}-\hspace{-0.7pt}r)^{\beta-1}}{\Gamma(\beta)}g(r)\,{\rm d}r.
\end{aligned}
\end{equation*}
The absolute value of the first summand can be bounded from above by
\begin{equation*}
\frac{\Vert g\Vert_{L^\infty(0,T]}}{\beta\Gamma(\beta)}\big\vert(x\hspace{-0.7pt}+\hspace{-0.7pt}h)^\beta-h^\beta-x^\beta\big\vert\le\frac{h^\beta\Vert g\Vert_{L^\infty(0,T]}}{\Gamma(1+\beta)},
\end{equation*}
where the inequality is obtained by observing that $(x\hspace{-0.7pt}+\hspace{-0.7pt}h)^\beta-h^\beta-x^\beta$ is decreasing from 0 to $-h^\beta$ (unattainable) with respect to $x\in[0,\infty)$. The absolute value of the second summand can be bounded from above by $h^\beta\Vert g\Vert_{L^\infty(0,T]}/\Gamma(1\hspace{-0.7pt}+\hspace{-0.7pt}\beta)$. Therefore $$\big\vert J_0^\beta g(x\hspace{-0.7pt}+\hspace{-0.7pt}h)-J_0^\beta g(x)\big\vert\le 2h^\beta\Vert g\Vert_{L^\infty(0,T]}/\Gamma(1\hspace{-0.7pt}+\hspace{-0.7pt}\beta).$$
Note that for all $x\in(0,T]$, 
$$\big\vert J_0^\beta g(x)-J_0^\beta g(0)\big\vert\le x^\beta\Vert g\Vert_{L^\infty(0,T]}/\Gamma(1\hspace{-0.7pt}+\hspace{-0.7pt}\beta),$$
we know $J_0^\beta g\in C^{0,\beta}[0,T]$ with $2\Vert g\Vert_{L^\infty(0,T]}/\Gamma(1\hspace{-0.7pt}+\hspace{-0.7pt}\beta)$ as a H\"older constant.
\end{proof}

\begin{lemma}
\label{beta Holder regularity of J0beta singular g}
If $g\in L^1 (0,T] $ satisfies $\big\vert g(x)\big\vert\le Mx^{\alpha-\beta}$   for some $\alpha,\,M\ge0$ and all $x\in(0,T]$, then for all $T_1\in(0,T),\;J_0^\beta g\in C^{0,\beta}[T_1,T]$ with a H\"older constant being $2M\max\{T_1^{\alpha-\beta}\!,\,T^{\alpha-\beta}\}/\Gamma(1\hspace{-0.7pt}+\hspace{-0.7pt}\beta)$.
\end{lemma}

\begin{proof}
For $0<T_1\le x<x\hspace{-0.7pt}+\hspace{-0.7pt}h\le T$, we have
\begin{equation*}
J_0^\beta g(x\!+\!h)-J_0^\beta g(x)=\int_0^x\frac{(x\!+\!h\!-\!r)^{\beta-1}-(x\!-\!r)^{\beta-1}}{\Gamma(\beta)}g(r)\,{\rm d}r+\int_x^{x+h}\frac{(x\!+\!h\!-\!r)^{\beta-1}}{\Gamma(\beta)}g(r)\,{\rm d}r.
\end{equation*}
The absolute value of the first summand can be bounded from above by
\begin{equation*}
\begin{aligned}
&\, \int_0^x\frac{(x\hspace{-0.7pt}-\hspace{-0.7pt}r)^{\beta-1}-(x\hspace{-0.7pt}+\hspace{-0.7pt}h\hspace{-0.7pt}-\hspace{-0.7pt}r)^{\beta-1}}{\Gamma(\beta)}Mr^{\alpha-\beta}\,{\rm d}r\\
=&\, \frac M{\Gamma(\beta)}\bigg(x^\alpha\hspace{0.6pt}{\rm B}(1;\,\alpha\!+\!1\!-\!\beta,\,\beta)-(x\hspace{-0.7pt}+\hspace{-0.7pt}h)^\alpha\hspace{0.6pt}{\rm B}\Big(\frac x{x\hspace{-0.7pt}+\hspace{-0.7pt}h};\,\alpha\!+\!1\!-\!\beta,\,\beta\Big)\bigg)\\
\le&\, \frac{Mx^\alpha}{\Gamma(\beta)}\bigg({\rm B}(1;\,\alpha\!+\!1\!-\!\beta,\,\beta)-{\rm B}\Big(\frac x{x\hspace{-0.7pt}+\hspace{-0.7pt}h};\,\alpha\!+\!1\!-\!\beta,\,\beta\Big)\bigg)\\
=&\, \frac{Mx^\alpha}{\Gamma(\beta)}\int_{x/(x+h)}^1r^{\alpha-\beta}(1\hspace{-0.7pt}-\hspace{-0.7pt}r)^{\beta-1}\,{\rm d}r\\
\le&\, \frac{Mx^\alpha}{\Gamma(\beta)}\max\bigg\{1,\,\Big(\frac x{x\hspace{-0.7pt}+\hspace{-0.7pt}h}\Big)^{\alpha-\beta}\bigg\}\frac1\beta\Big(\frac h{x\hspace{-0.7pt}+\hspace{-0.7pt}h}\Big)^\beta\\
\le&\, \frac{Mx^{\alpha-\beta}}{\Gamma(1\hspace{-0.7pt}+\hspace{-0.7pt}\beta)}h^\beta
\le \frac{Mh^\beta\max\{T_1^{\alpha-\beta}\!,\,T^{\alpha-\beta}\}}{\Gamma(1\hspace{-0.7pt}+\hspace{-0.7pt}\beta)},
\end{aligned}
\end{equation*}
where B is the incomplete beta function. The absolute value of the second summand can be bounded from above by
\begin{equation*}
\begin{aligned}
\int_x^{x+h}\frac{(x\hspace{-0.7pt}+\hspace{-0.7pt}h\hspace{-0.7pt}-\hspace{-0.7pt}r)^{\beta-1}}{\Gamma(\beta)}Mr^{\alpha-\beta}\,{\rm d}r&=\, \frac M{\Gamma(\beta)}\int_0^h(h\hspace{-0.7pt}-\hspace{-0.7pt}r)^{\beta-1}(x\hspace{-0.7pt}+\hspace{-0.7pt}r)^{\alpha-\beta}\,{\rm d}r\\
&\le\, \frac M{\Gamma(\beta)}\max\{x^{\alpha-\beta},\,(x\hspace{-0.7pt}+\hspace{-0.7pt}h)^{\alpha-\beta}\}\frac{h^\beta}\beta\\
&\le\, \frac{Mh^\beta\max\{T_1^{\alpha-\beta}\!,\,T^{\alpha-\beta}\}}{\Gamma(1\hspace{-0.7pt}+\hspace{-0.7pt}\beta)}.
\end{aligned}
\end{equation*}
Therefore $\big\vert J_0^\beta g(x\hspace{-0.7pt}+\hspace{-0.7pt}h)-J_0^\beta g(x)\big\vert\le2Mh^\beta\max\{T_1^{\alpha-\beta}\!,\,T^{\alpha-\beta}\}/\Gamma(1\hspace{-0.7pt}+\hspace{-0.7pt}\beta)$, we know $J_0^\beta g\in C^{0,\beta}[T_1,T]$ with $2M\max\{T_1^{\alpha-\beta}\!,\,T^{\alpha-\beta}\}/\Gamma(1\hspace{-0.7pt}+\hspace{-0.7pt}\beta)$ as a H\"older constant.
\end{proof}

\begin{remark}
Lemma \ref{beta Holder regularity of J0beta singular g} is a natural generalization of Lemma \ref{lem: Holder regularity of J0beta g}, since $\Vert g\Vert_{L^\infty[T_1,T]}\le M\max\{T_1^{\alpha-\beta}\!,\,T^{\alpha-\beta}\}$.
For $\alpha<\beta$, the $\beta$-H\"older continuity of $J_0^\beta g$ is only local (away from 0), since $T_1^{\alpha-\beta}$ explodes as $T_1\rightarrow0$. Indeed, in such cases, $\big\vert J_0^\beta g(x)-J_0^\beta g(0)\big\vert\le Mx^\alpha\Gamma(\alpha\hspace{-0.7pt}+\hspace{-0.7pt}1\hspace{-0.7pt}-\hspace{-0.7pt}\beta)/\Gamma(1\hspace{-0.7pt}+\hspace{-0.7pt}\alpha)$ for $x\in(0,T]$, with $J_0^\beta g(0)$ defined to be 0. So the uniform H\"older continuity of $J_0^\beta g$ on $[0,T]$ is probably only of exponent $\alpha$. Sure enough, we have the following result.
\end{remark}

\begin{lemma}
For the same $g$ in Lemma \ref{beta Holder regularity of J0beta singular g}, additionally assume $\alpha\in(0,\beta)$, then $J_0^\beta g\in C^{0,\alpha}[0,T]$ with a H\"older constant being $2M\Gamma(\alpha\hspace{-0.7pt}+\hspace{-0.7pt}1\hspace{-0.7pt}-\hspace{-0.7pt}\beta)/\Gamma(1\hspace{-0.7pt}+\hspace{-0.7pt}\alpha)$.
\label{alpha Holder regularity of J0beta singular g}
\end{lemma}

\begin{proof}
By assumptions on $g$, we have
\begin{equation*}
\big\vert J_0^{\beta-\alpha}g(x)\big\vert\le J_0^{\beta-\alpha}\vert g\vert(x)\le MJ_0^{\beta-\alpha}[x^{\alpha-\beta}]=M\Gamma(\alpha\hspace{-0.7pt}+\hspace{-0.7pt}1\hspace{-0.7pt}-\hspace{-0.7pt}\beta),
\end{equation*}
then by Lemma \ref{lem: Holder regularity of J0beta g}, $J_0^\alpha J_0^{\beta-\alpha}g\in C^{0,\alpha}[0,T]$ with a H\"older constant being $2M\Gamma(\alpha\hspace{-0.7pt}+\hspace{-0.7pt}1\hspace{-0.7pt}-\hspace{-0.7pt}\beta)/\Gamma(1\hspace{-0.7pt}+\hspace{-0.7pt}\alpha)$, and consequently $J_0^\alpha J_0^{\beta-\alpha}g\in C[0,T]$. By Lemma \ref{beta Holder regularity of J0beta singular g}, $J_0^\beta g\in C(0,T]$. Recalling \cite[Theorem 2.2]{kai}, we know  $J_0^\alpha J_0^{\beta-\alpha}g=J_0^\beta g$ everywhere, with $J_0^\beta g(0)$ defined to be 0. Therefore, $J_0^\beta g$ is also in $C^{0,\alpha}[0,T]$ with the same H\"older constant as $J_0^\alpha J_0^{\beta-\alpha}g$.
\end{proof}
\subsection{Proof of the diagram in Remark \ref{strong solution space}}
\label{proof of relations between sets}

The proof can be decomposed into several statements.
\begin{enumerate}[(i)]
\item $J^\beta_0\big[C\cap L^1(0,T]\big]\subseteq C_\beta(0,T]$: follows Lemma \ref{lem:prelim}-(ii).
\item $J^\beta_0\big[C\cap L^1(0,T]\big]\nsupseteq C_\beta(0,T]$: this is because $x^{\frac\beta2-1}$ is in $C_\beta(0,T]$ but not in $J^\beta_0\big[C\cap L^1(0,T]\big]$. The latter fact can be proven by Lemma \ref{lem:prelim}-(ii), noticing $\DRL\beta x^{\frac\beta2-1}\notin L^1(0,T]$.
\item $J^\beta_0\big[C\cap L^1(0,T]\big]\nsubseteq C_\beta[0,T]$: for example, $J^\beta_0\big(x^{-\frac{1+\beta}2}\big)$ is not in $C[0,T]$.
\item $J^\beta_0\big[C\cap L^1(0,T]\big]\nsupseteq C_\beta[0,T]$: see Lemma \ref{C_beta[0,T] not subseteq Jbeta0(C cap L1(0,T])}.
\item $J^\beta_0\big[C\cap L^1(0,T]\big]\supseteq U$: follows Lemma \ref{lem:prelim}-(iii).
\item $J^\beta_0\big[C\cap L^1(0,T]\big]\cap C_\beta[0,T]\nsubseteq U$: to see this, in the proof of Lemma \ref{C_beta[0,T] not subseteq Jbeta0(C cap L1(0,T])},  we may take $\alpha\!-\!\beta\!-\!\gamma\beta\!\in(-1,-\beta]$ instead of $\alpha\!-\!\beta\!-\!\gamma\beta\!<\!-1$, then $\big(J^{1-\beta}_0u\big)'=O(x^{\alpha-\beta-\gamma\beta})+O(x^{\alpha-\beta})$ is absolutely integrable. We know such $u$ is in $C_\beta[0,T]$, and by Lemma \ref{lem:prelim}-(iii), $u\in J^\beta_0\big[C\cap L^1(0,T]\big]$. Yet $u \notin U$, because $\DCs\beta u=\big(J^{1-\beta}_0u\big)'-x^{-\beta}u/\Gamma(1\!-\!\beta)=O(x^{\alpha-\beta-\gamma\beta})+O(x^{\alpha-\beta})$, which cannot be controlled by $x^{ \tilde\alpha-\beta}$ for any $\tilde\alpha>0$.
\end{enumerate}

\begin{lemma}
\label{C_beta[0,T] not subseteq Jbeta0(C cap L1(0,T])}
We have $J^\beta_0\big[C\cap L^1(0,T]\big]\nsupseteq C_\beta[0,T]$.
\end{lemma}

\begin{proof}
We only need to find a $u\in C[0,T]$, such that $J^{1-\beta}_0u\in C^1(0,T]$ but $\big(J^{1-\beta}_0u\big)'\notin L^1(0,T]$. Once succeed, we know such $u\in C_\beta[0,T]$ by definition. If $u\in J^\beta_0\big[C\cap L^1(0,T]\big]$, then $\exists\;v\in C\cap L^1(0,T]$ such that $u=J^\beta_0v$. By Lemma \ref{lem:prelim}-(iii), we know $v=\DRL\beta u=\big(J^{1-\beta}_0u\big)'\notin L^1(0,T]$. This contradiction implies $u\notin J^\beta_0\big[C\cap L^1(0,T]\big]$ and thus $C_\beta[0,T]\nsubseteq J^\beta_0\big[C\cap L^1(0,T]\big]$.

In order for $J^{1-\beta}_0u$ to have continuous but  not absolutely integrable derivative on $(0,T]$, and thus have unbounded variation, $u$ had better oscillate quickly near 0. Let $u(x)=x^\alpha\sin(x^{-\gamma})$, where $\alpha,\gamma>0$. Then
\begin{equation*}
J^{1-\beta}_0u(x)=\int_0^x\frac{r^\alpha\sin(r^{-\gamma})\,{\rm d}r}{\Gamma(1-\beta)(x-r)^\beta}=\frac{x^{1-\beta+\alpha}}{\Gamma(1-\beta)}\int_0^1\frac{s^\alpha\sin\big((xs)^{-\gamma}\big)}{(1-s)^\beta}\,{\rm d}s=\frac{x^{1-\beta+\alpha}}{\Gamma(1-\beta)}\phi(x),
\end{equation*}
where $\phi$ is defined in Lemma \ref{phi and estimate for phi'}. So $J^{1-\beta}_0u\in C^1(0,T]$ and
\begin{equation*}
\big(J^{1-\beta}_0u\big)'(x)=\frac{1-\beta+\alpha}{\Gamma(1-\beta)x^{\beta-\alpha}}\phi(x)+\frac{x^{1-\beta+\alpha}}{\Gamma(1-\beta)}\phi'(x)
\end{equation*}
Note that $\Vert\phi\Vert_{L^\infty(0,T]}\le\Gamma(1\!+\!\alpha)\Gamma(1\!-\!\beta)/\Gamma(2\!-\!\beta\!+\!\alpha)$, we know the first summand  is absolutely integrable, so $\big(J^{1-\beta}_0u\big)'$ has the same absolute integrability with the second summand. By Lemma \ref{phi and estimate for phi'}, we know
\begin{equation*}
\frac{x^{1-\beta+\alpha}}{\Gamma(1-\beta)}\phi'(x)=\gamma^\beta x^{\alpha-\beta-\gamma\beta}\sin\Big(\frac1{x^\gamma}-\frac{\pi\beta}2\Big)+O\big(x^{\alpha-\beta}\big).
\end{equation*}
Choose $\gamma$ large enough so that $\alpha\!-\!\beta\!-\!\gamma\beta\!<\!-1$, then $\big(J^{1-\beta}_0u\big)'\notin L^1(0,T]$ and we are done.
Note although the above estimate is enough for us, it may be tightened into $\big(J^{1-\beta}_0u\big)'(x)=\gamma^\beta x^{\alpha-\beta-\gamma\beta}\big[\sin(x^{-\gamma}\!-\!\pi\beta/2) +O(x^\gamma)\big]$.

\end{proof}

\begin{lemma}
\label{phi and estimate for phi'}
For $\alpha,\gamma>0$, define
\begin{equation*}
\phi(x)=\int_0^1\frac{s^\alpha\sin\big((xs)^{-\gamma}\big)}{(1-s)^\beta}\,{\rm d}s,\quad x\in(0,T],
\end{equation*}
then $\phi\in C^1(0,T]$, and $\phi'(x)$ is given by an improper integral which converges absolutely or conditionally,
\begin{equation*}
\phi'(x)=\frac{-\gamma}{x^{1+\gamma}}\int_0^1\frac{\cos\big((xs)^{-\gamma}\big)}{s^{\gamma-\alpha}(1-s)^\beta}\,{\rm d}s=\int^\infty_1\frac{-x^{-1-\gamma}\cos(x^{-\gamma}r)\,{\rm d}r}{r^{(1+\alpha)/\gamma}(1-r^{-1/\gamma})^\beta},\quad x\in(0,T],
\end{equation*}
and we have the following estimate for $\phi'$,
\begin{equation*}
\phi'(x)=\gamma^\beta\frac{\Gamma(1\!-\!\beta)}{x^{1+\gamma\beta}}\sin\Big(\frac1{x^\gamma}-\frac{\pi\beta}2\Big)+O\Big(\frac1x\Big),
\end{equation*}
which means $\phi'$ explodes like $x^{-1-\gamma\beta}$ at 0.
\end{lemma}

\begin{proof}
For $0<\varepsilon<1/2$, define
\begin{equation*}
\phi_\varepsilon(x)=\int_\varepsilon^{1-\varepsilon}\frac{s^\alpha\sin\big((xs)^{-\gamma}\big)}{(1-s)^\beta}\,{\rm d}s,\quad x\in(0,T],
\end{equation*}
then $\phi_\varepsilon\in C^1(0,T]$, and
\begin{equation*}
\phi_\varepsilon'(x)=\frac{-\gamma}{x^{1+\gamma}}\int_\varepsilon^{1-\varepsilon}\frac{\cos\big((xs)^{-\gamma}\big)}{s^{\gamma-\alpha}(1-s)^\beta}\,{\rm d}s = \int^{\varepsilon^{-\gamma}}_{(1-\varepsilon)^{-\gamma}}\frac{-x^{-1-\gamma}\cos(x^{-\gamma}r)\,{\rm d}r}{r^{(1+\alpha)/\gamma}(1-r^{-1/\gamma})^\beta}.
\end{equation*}
As $\varepsilon\rightarrow0$, obviously $\phi_\varepsilon$ converges to $\phi$ uniformly on any closed subinterval of $(0,T]$. Our next step is to verify $\phi_\varepsilon'$ also converges uniformly on the closed subinterval. After that, we can obtain $\phi'$ by exchanging limit and differentiation. To this end, we will prove that $\phi_\varepsilon'$ satisfies the Cauchy criterion for uniform convergence.

For $0<\varepsilon<\delta<1/2$ and $x\in(0,T]$,
\begin{equation*}
\phi_{\varepsilon}'(x)-\phi_{\delta}'(x)=\frac{-1}{x^{1+\gamma}}\bigg(\int^{(1-\delta)^{-\gamma}}_{(1-\varepsilon)^{-\gamma}}+\int^{\varepsilon^{-\gamma}}_{\delta^{-\gamma}}\bigg)\frac{\cos(x^{-\gamma}r)\,{\rm d}r}{r^{(1+\alpha)/\gamma}(1-r^{-1/\gamma})^\beta}.
\end{equation*}
We can bound the above integral over the first subinterval as follows,
\begin{equation*}
\begin{aligned}
\bigg\vert\int^{(1-\delta)^{-\gamma}}_{(1-\varepsilon)^{-\gamma}}\frac{\cos(x^{-\gamma}r)\,{\rm d}r}{r^{(1+\alpha)/\gamma}(1-r^{-1/\gamma})^\beta}\bigg\vert&\le\int^{(1-\delta)^{-\gamma}}_{(1-\varepsilon)^{-\gamma}}r^{-(1+\alpha)/\gamma}(1-r^{-1/\gamma})^{-\beta}\,{\rm d}r\\
&=\int^{(1-\delta)^{-\gamma}}_{(1-\varepsilon)^{-\gamma}}\frac{r^{1-\alpha/\gamma}}{r^{1+1/\gamma}}(1-r^{-1/\gamma})^{-\beta}\,{\rm d}r\\
&\le\int^{(1-\delta)^{-\gamma}}_{(1-\varepsilon)^{-\gamma}}\frac{(1-\delta)^{-\gamma}}{r^{1+1/\gamma}}(1-r^{-1/\gamma})^{-\beta}\,{\rm d}r\\
&=\frac\gamma{1-\beta}\frac{\delta^{1-\beta}-\varepsilon^{1-\beta}}{(1-\delta)^\gamma}.
\end{aligned}
\end{equation*}

For the second subinterval, we have
\begin{equation*}
\begin{aligned}
\bigg\vert\int^{\varepsilon^{-\gamma}}_{\delta^{-\gamma}}\frac{\cos(x^{-\gamma}r)\,{\rm d}r}{r^{(1+\alpha)/\gamma}(1-r^{-1/\gamma})^\beta}\bigg\vert&=\bigg\vert\int^{\varepsilon^{-\gamma}}_{\delta^{-\gamma}}\frac{x^\gamma\,{\rm d}\sin(x^{-\gamma}r)}{r^{(1+\alpha)/\gamma}(1-r^{-1/\gamma})^\beta}\bigg\vert\\
&\le\Bigg\vert\frac{x^\gamma\sin(x^{-\gamma}r)}{r^{(1+\alpha)/\gamma}(1-r^{-1/\gamma})^\beta}\bigg\vert^{\varepsilon^{-\gamma}}_{\delta^{-\gamma}}\Bigg\vert\\
&\quad+\bigg\vert x^\gamma\int^{\varepsilon^{-\gamma}}_{\delta^{-\gamma}}\sin(x^{-\gamma}r)\,{\rm d}\big(r^{-(1+\alpha)/\gamma}(1-r^{-1/\gamma})^{-\beta}\big)\bigg\vert,
\end{aligned}
\end{equation*}
note that $r^{-(1+\alpha)/\gamma}(1-r^{-1/\gamma})^{-\beta}$ is decreasing on $(1,\infty)$, so the second summand can be bounded from above by $-x^\gamma\int^{\varepsilon^{-\gamma}}_{\delta^{-\gamma}}{\rm d}\big[r^{-(1+\alpha)/\gamma}(1-r^{-1/\gamma})^{-\beta}\big]=x^\gamma\big(\delta^{1+\alpha}(1-\delta)^{-\beta}-\varepsilon^{1+\alpha}(1-\varepsilon)^{-\beta}\big)$. The first summand can be bounded from above by $x^\gamma\big(\delta^{1+\alpha}(1-\delta)^{-\beta}+\varepsilon^{1+\alpha}(1-\varepsilon)^{-\beta}\big)$. To sum up, for $0<\varepsilon<\delta<1/2$ and $x\in(0,T]$, we have
\begin{equation*}
\big\vert\phi_{\varepsilon}'(x)-\phi_{\delta}'(x)\big\vert\le\frac1{x^{1+\gamma}}\bigg(\frac\gamma{1-\beta}\frac{\delta^{1-\beta}-\varepsilon^{1-\beta}}{(1-\delta)^\gamma}+\frac{2x^\gamma\delta^{1+\alpha}}{(1-\delta)^\beta}\bigg)\le\frac{2^\gamma\gamma}{1-\beta}\frac{\delta^{1-\beta}}{x^{1+\gamma}}+2^{1+\beta}\frac{\delta^{1+\alpha}}x,
\end{equation*}
and thus Cauchy criterion is satisfied on any closed subinterval of $(0,T]$.

So far we have proved $\phi\in C^1(0,T]$ and
\begin{equation}
\label{expression for phi'}
\phi'(x)=\int^\infty_1\frac{-x^{-1-\gamma}\cos(x^{-\gamma}r)\,{\rm d}r}{r^{(1+\alpha)/\gamma}(1-r^{-1/\gamma})^\beta},\quad x\in(0,T].
\end{equation}

The only job left is to estimate $\phi'$ around $x=0$. The essential observation is that the denominator in \eqref{expression for phi'} behaves like $\gamma^{-\beta}(r-1)^\beta$ as $r\rightarrow1$, and that this singularity at $r=1$ has the dominating effect on the whole integral, so that we need not worry such behavior is only asymptotic at $r=1$. Our strategy is to replace the denominator with $\gamma^{-\beta}(r-1)^\beta$, which will make our life much easier, and then we only need to show the above replacement will only cause higher order effect. To this end, we need the boundedness of the total variation of $\eta$ in Lemma \ref{eta of bounded variation}. Let us rewrite \eqref{expression for phi'} in terms of $\eta$ and $(r-1)^{-\beta}$,
\begin{equation*}
\phi'(x)=\frac{-1}{x^{1+\gamma}}\int^\infty_1\bigg(\eta(r)+\frac{\gamma^\beta}{(r-1)^\beta}\bigg)\cos(x^{-\gamma}r)\,{\rm d}r.
\end{equation*}

The effect of $\eta$ can be bounded as follows,
\begin{equation*}
\begin{aligned}
\bigg\vert\int^\infty_1\eta(r)\cos(x^{-\gamma}r)\,{\rm d}r\bigg\vert&=x^\gamma\bigg\vert\int^\infty_1\eta(r)\,{\rm d}\sin(x^{-\gamma}r)\bigg\vert\\
&=x^\gamma\bigg\vert\eta(r)\sin(x^{-\gamma}r)\Big\vert^\infty_1-\int^\infty_1\sin(x^{-\gamma}r)\,{\rm d}\eta(r)\bigg\vert\\
&=x^\gamma\bigg\vert\int^\infty_1\sin(x^{-\gamma}r)\,{\rm d}\eta(r)\bigg\vert\\
&\le x^\gamma V_1^\infty(\eta),
\end{aligned}
\end{equation*}
where the third identity is due to $\eta(1)=\eta(\infty)=0$ and $\big\vert\sin(x^{-\gamma}r)\big\vert\le1$.

For $(r-1)^{-\beta}$, we can evaluate the integral exactly,
\begin{equation*}
\begin{aligned}
\int^\infty_1\frac{\cos(x^{-\gamma}r)}{(r-1)^\beta}\,{\rm d}r&=\int^\infty_0\frac{\cos(x^{-\gamma}r+x^{-\gamma})}{r^\beta}\,{\rm d}r\\
&=\cos(x^{-\gamma})\int^\infty_0\frac{\cos(x^{-\gamma}r)}{r^\beta}\,{\rm d}r-\sin(x^{-\gamma})\int^\infty_0\frac{\sin(x^{-\gamma}r)}{r^\beta}\,{\rm d}r\\
&=x^{\gamma-\gamma\beta}\cos(x^{-\gamma})\int^\infty_0\frac{\cos(r)}{r^\beta}\,{\rm d}r-x^{\gamma-\gamma\beta}\sin(x^{-\gamma})\int^\infty_0\frac{\sin(r)}{r^\beta}\,{\rm d}r\\
&=x^{\gamma-\gamma\beta}\Gamma(1-\beta)\bigg(\cos(x^{-\gamma})\sin\Big(\frac{\pi\beta}2\Big)-\sin(x^{-\gamma})\cos\Big(\frac{\pi\beta}2\Big)\bigg)\\
&=x^{\gamma-\gamma\beta}\Gamma(1-\beta)\sin\Big(\frac{\pi\beta}2-x^{-\gamma}\Big).
\end{aligned}
\end{equation*}
In conclusion
\begin{equation*}
\phi'(x)=\gamma^\beta\frac{\Gamma(1\!-\!\beta)}{x^{1+\gamma\beta}}\sin\Big(\frac1{x^\gamma}-\frac{\pi\beta}2\Big)+O\Big(\frac1x\Big),
\end{equation*}
where $\big\vert O(1/x)\big\vert\le  V_1^\infty(\eta)/x$.
\end{proof}

\begin{lemma}
\label{eta of bounded variation}
For $\alpha,\gamma>0$, denote $\eta(r)=r^{-(1+\alpha)/\gamma}(1-r^{-1/\gamma})^{-\beta}-\gamma^\beta(r-1)^{-\beta}$, then $\eta\in C^1(1,\infty)$, $\lim\limits_{r\rightarrow1}\eta(r)=\lim\limits_{r\rightarrow\infty}\eta(r)=0$, and $V_1^\infty(\eta)=\int_1^\infty\big\vert\eta'(r)\big\vert\,{\rm d}r<\infty$.
\end{lemma}

\begin{proof}
Obviously $\eta\in C^1(1,\infty)$, and $\eta(r)\rightarrow0$ as $r\rightarrow\infty$. By Taylor expansion at $r=1$, we obtain
\begin{equation*}
\eta(r)=\Big(\frac{1+\gamma}2\beta-1-\alpha\Big)\gamma^{\beta-1}(r-1)^{1-\beta}+O\big((r-1)^{2-\beta}\big).
\end{equation*}
So we know $\eta$ vanishes at both 1 and $\infty$. Now let us substitute $r$ with $s^{-\gamma}$, i.e., for $s\in(0,1)$ define $\tilde\eta(s)=\eta(s^{-\gamma})=s^{1+\alpha}(1-s)^{-\beta}-\gamma^\beta(s^{-\gamma}-1)^{-\beta}$. Then $\tilde\eta\in C^1(0,1)$ and
\begin{equation*}
\tilde\eta'(s)=s^\alpha\frac{1+\alpha}{(1-s)^\beta}+\beta\frac{s^{1+\alpha}}{(1-s)^{1+\beta}}-\beta\frac{\gamma^{1+\beta}s^{-1-\gamma}}{(s^{-\gamma}-1)^{1+\beta}}.
\end{equation*}
On $(0,1/2)$, we can bound $\vert\tilde\eta'\vert$ by an integrable function,
\begin{equation*}
\vert\tilde\eta'(s)\vert\le \frac{1+\alpha+\beta}{2^{\alpha-\beta}}+\frac{\beta\gamma^{1+\beta}s^{\gamma\beta-1}}{(1-2^{-\gamma})^{1+\beta}},\quad s\in\Big(0,\frac12\Big).
\end{equation*}
By Taylor expansion at $s=1$, we obtain
\begin{equation*}
\begin{aligned}
\tilde\eta'(s)=&\frac{2(\alpha\!+\!1)\!-\!\beta(\gamma\!+\!1)}2(1\!-\!\beta)(1-s)^{-\beta}\\
&+\frac{\beta(\gamma\!+\!1)\big(3\beta(\gamma\!+\!1)\!-\!\gamma\!-\!5\big)\!-\!12\alpha(\alpha\!+\!1)}{24}(2\!-\!\beta)(1-s)^{1-\beta}+O\big((1-s)^{2-\beta}\big).
\end{aligned}
\end{equation*}
Note that $\tilde\eta'\in C(0,1)$, we know $\tilde\eta'$ is absolutely integrable on $(0,1)$ according to the above estimates. Therefore $V_0^1(\tilde\eta)=\int_0^1\big\vert\tilde\eta'(s)\big\vert\,{\rm d}s$ is a finite number which only depends on $\alpha,\beta,\gamma$. Since the change of variables from $r$ to $s$ is monotone, $\eta$ and $\tilde\eta$ have the same total variation which is bounded.
\end{proof}

\section{Uniform bounds from maximum principle arguments}
\subsection{Bounds of Caputo relaxation solutions (Mittag-Leffler and Kilbas–Saigo functions)}\label{App:C} 
We define the Caputo derivative as $\DCp\beta u:=\DRL\beta[u-u(0)]$.

\begin{proposition}
\label{CaputoRelaxation}
For $\lambda<0$ and $u_0=1$, the series \eqref{Caputo relaxation solution}, which solves $\DCp\beta u =\lambda u $, has the following bounds for all $x\ge0$,
\[
\frac{1}{1+|\lambda|\Gamma(1-\beta)x^{\beta}}\le u(x)\le \frac{1}{1+|\lambda|\Gamma(1+\beta)^{-1}x^{\beta}}.
\]
\end{proposition}

\begin{proof}
 Obviously the $u$ defined by \eqref{Caputo relaxation solution} is in $C^1(0,\infty)\cap C[0,\infty)$, so the Caputo derivative of $u$  allows the representation 
\begin{equation}
\label{representation}
\DCp\beta u(x)=\int_0^x\big(u(x)-u(x-r)\big)\frac{r^{-1-\beta}}{|\Gamma(-\beta)|}\,{\rm d}r+\frac{u(x)-u(0)}{x^\beta\beta|\Gamma(-\beta)|},\quad x>0.
\end{equation}

Then Proposition \ref{CaputoRelaxation} is immediate from Lemma \ref{lem} and Lemma \ref{lem2} below.
\end{proof}

\begin{lemma}\label{lem}
The solution $u$ in Proposition \ref{CaputoRelaxation} stays positive and is bounded from above by
$v(x)=(1+c \vert\lambda\vert x^\beta)^{-1}$, where  $c =\Gamma(1+\beta)^{-1}$.
 
\end{lemma}

\begin{proof}   The positivity of $u$ can be proven similarly to that in Lemma \ref{upper bound for solutions to linear equation}. As for its upper bound, analogously we let $v(x)=(x^\beta+c)^{-1}$. Now, given $\beta\in(0,1)$ and $\lambda<0$, it remains to find a $c>0$, such that $\DCp\beta v\ge \lambda v$, i.e., for all $x>0$,
\begin{equation*}
\int_0^x\big(v(x)-v(x-r)\big)\frac{ r^{-1-\beta}}{\big|\Gamma(-\beta)\big|}\,{\rm d}r+\frac{v(x)-v(0)}{x^\beta\beta\big|\Gamma(-\beta)\big|}  \ge \lambda v(x) ,
\end{equation*}
or equivalently, for all $x>0$,
\begin{equation}
\label{equivalent inequality Caputo}
\begin{aligned}
\big\vert\lambda\hspace{0.5pt}\Gamma(-\beta)\big\vert&\ge \int_0^x\bigg(\frac{x^\beta+c}{(x-r)^\beta+c}-1\bigg)\frac{{\rm d}r}{r^{1+\beta}}+\beta^{-1}x^{-\beta}\bigg(\frac{x^\beta+c}c-1\bigg),
\end{aligned}
\end{equation}
 where the first summand  can be bounded from above by 
\begin{equation*}
c^{-1}\int_0^x\big(x^\beta-(x-r)^\beta\big)\frac{{\rm d}r}{r^{1+\beta}} 
=c^{-1}\big\vert\Gamma(-\beta)\big\vert\big(\Gamma(1+\beta)-\Gamma(1-\beta)^{-1}\big).
\end{equation*}
In order to fulfil \eqref{equivalent inequality Caputo}, we only need to require $c$ to satisfy $\big\vert\lambda\hspace{0.5pt}\Gamma(-\beta)\big\vert\ge c^{-1}\big\vert\Gamma(-\beta)\big\vert\hspace{0.5pt}\Gamma(1+\beta)$, so we can choose $c=\Gamma(1+\beta)/\vert\lambda\vert$.
\end{proof}

\begin{lemma}\label{lem2}
The solution $u$ in Proposition \ref{CaputoRelaxation} is bounded from below by $w(x)=(1+d\vert\lambda\vert  x^\beta)^{-1}$, where $d =\Gamma(1-\beta)$.
\end{lemma}

\begin{proof}
 Similarly to the proof of Lemma \ref{lem},  given $\beta\in(0,1)$ and $\lambda<0$, it remains to  find a $d>0$ such that $\DCp\beta w \le \lambda w$, where  $w(x)=(x^\beta+d)^{-1}$  , i.e., for all $x>0$, 
\begin{equation*}
\int_0^x\big(w(x)-w(x-r)\big)\frac{ r^{-1-\beta}}{\big|\Gamma(-\beta)\big|}\,{\rm d}r+\frac{w(x)-w(0)}{x^\beta\beta\big|\Gamma(-\beta)\big|}\le \lambda w(x), 
\end{equation*} or equivalently, for all $x>0$,
\begin{equation*}
\big\vert\lambda\hspace{0.5pt}\Gamma(-\beta)\big\vert\le\int_0^x\bigg(\frac{w(x-r)}{w(x)}-1\bigg)\frac{{\rm d}r}{r^{1+\beta}}+\beta^{-1}x^{-\beta}\bigg(\frac{w(0)}{w(x)}-1\bigg).
\end{equation*}
Note that the first  summand  is nonnegative, and the second summand equals $\beta^{-1}x^{-\beta}\big((x^\beta+d)/d-1\big)$ and thus $(d\beta)^{-1}$, so we only need choose such $d$ that $\big\vert\lambda\hspace{0.5pt}\Gamma(-\beta)\big\vert\le(d\beta)^{-1}$, i.e., $d\le\big\vert\lambda\beta\hspace{0.5pt}\Gamma(-\beta)\big\vert^{-1}=\big\vert\lambda\hspace{0.5pt}\Gamma(1\!-\!\beta)\big\vert^{-1}$.
\end{proof}

\begin{proposition}
\label{CaputoRelaxation Kilbas Saigo}
 For $\lambda<0$, $u_0=1$ and $\alpha>0$, the series \eqref{solution to Caputo linear nonconstant equation}, which solves $\DCp\beta u =\lambda x^{\alpha-\beta}u $, has the following bounds for all $x\ge0$,
\[
\frac{1}{1+|\lambda|\hspace{0.5pt}\Gamma(1\!-\!\beta)x^\alpha}\le u(x)\le \frac{1}{1+|\lambda|\hspace{0.5pt}\Gamma(1\!+\!\alpha\!-\!\beta)\Gamma(1\!+\!\alpha)^{-1}x^\alpha}.
\]
\end{proposition}
Note that these two bounds are identical to those in \cite[Proposition 4.12]{BSV19} (To see this, rewrite $u(x)$ as $E_{\beta,\,\alpha/\!\beta,\,\alpha/\!\beta\hspace{-0.5pt}-\hspace{-0.8pt}1}(\lambda x^\alpha)$ following their notation $E_{\,\cdot\,,\,\cdot\,,\,\cdot\,}(\,\cdot\,)$ which is the Kilbas–Saigo function). The proof is parallel to that of Proposition \ref{CaputoRelaxation}, so we omit it. The only difference is that we now need the criterion $\DCp\beta v\ge \lambda x^{\alpha-\beta}v$ to find $v$ of the form $(1+cx^\alpha)^{-1}$ as the upper bound (the lower bound is analogous).
 
\subsection{Bounds of censored relaxation solutions}
\label{maximum principle proofs}
$ $\newline
\begin{proof} \emph{[of Lemma \ref{lower bound for solutions to linear equation}]} 
According to Remark \ref{rmk:oncfd}-(i), we only need to consider a particular $\lambda$. Take $\lambda=-d^{-\beta}$, and our job is to prove that the solution is bounded from below by $w(x)=(1+x)^{-1}(1+x^\beta)^{-1}$, which is convex on $[0,\infty)$. Similar to the proof of Lemma \ref{upper bound for solutions to linear equation}, it remains to prove $\DCs\beta w\le\lambda w$, i.e., for all $x>0$,
\begin{equation*}
\int_0^x\big(w(x)-w(x-r)\big)\frac{r^{-1-\beta}}{\big|\Gamma(-\beta)\big|}{\rm d}r\le\lambda w(x),
\end{equation*}
or equivalently, for all $x>0$,
\begin{equation}
\label{lower bound of linear equation, equivalent condition for d}
\big|\lambda\Gamma(-\beta)\big|\le\int_0^x\frac{{\rm d}r}{r^{1+\beta}}\Big(\frac{w(x-r)}{w(x)}-1\Big).
\end{equation}

For $x>2$, the right-hand side of \eqref{lower bound of linear equation, equivalent condition for d} can be bounded from below by
\begin{equation*}
\begin{aligned}
\int_{x-1}^x\frac{{\rm d}r}{r^{1+\beta}}\Big(\frac{w(x\!-\!r)}{w(x)}-1\Big)
\ge\frac1{x^{1+\beta}}\Big(\frac{w(1)}{w(x)}-1\Big)
=\frac1{x^{1+\beta}}\bigg(\frac{1+x^\beta+x+x^{1+\beta}}4-1\bigg)
\ge\frac14.
\end{aligned}
\end{equation*}
For $0<x<2$, since $w$ is convex, $w(x-r)$ is bounded from below by $w(x)-w'(x)r$, so
\begin{equation*}
\frac{w(x-r)}{w(x)}-1\ge r\bigg(\frac1{1+x}+\frac{\beta x^{\beta-1}}{1+x^\beta}\bigg),
\end{equation*}
therefore the right-hand side of \eqref{lower bound of linear equation, equivalent condition for d} can be bounded from below by
\begin{equation*}
\int_0^x\frac{{\rm d}r}{r^\beta}\bigg(\frac1{1+x}+\frac{\beta x^{\beta-1}}{1+x^\beta}\bigg)
\ge\frac{x^{1-\beta}}{1-\beta}\frac{\beta x^{\beta-1}}{1+x^\beta}
\ge\frac{\beta}{(1-\beta)(1+2^\beta)}.
\end{equation*}

Now we only need verify $\lambda=-d^{-\beta}$ satisfies $\big\vert\lambda\Gamma(-\beta)\big\vert\le\min\big\{1/4,\,\beta(1-\beta)^{-1}/(1+2^\beta)\big\}$, which is obvious by the definition of $d$.

\end{proof}

\begin{proof} \emph{[of Lemma \ref{upper bound for solutions to linear nonconstant equation}]}
The positivity of $u$ can be proven similarly to that in Lemma \ref{upper bound for solutions to linear equation}. As for its upper bound, analogously we let $v(x)=1/(x^{1+\alpha}+c)$. Now, given $\beta\in(0,1),\alpha>0$ and $\lambda<0$, it remains to find a $c>0$, such that $\DCs\beta v\ge\lambda x^{\alpha-\beta}v$, i.e., for all $x>0$,
\begin{equation*}
\int_0^x\big(v(x)-v(x-r)\big)\frac{ r^{-1-\beta}}{\big|\Gamma(-\beta)\big|}\,{\rm d}r\ge\lambda x^{\alpha-\beta}v(x),
\end{equation*}
or equivalently, for all $x>0$,
\begin{equation}
\label{equivalent inequality nonconstant}
\begin{aligned}
\big\vert\lambda\Gamma(-\beta)\big\vert\ge&\, x^{\beta-\alpha}\int_0^x\frac{x^{1+\alpha}-(x-r)^{1+\alpha}}{c+(x-r)^{1+\alpha}}\cdot\frac{{\rm d}r}{r^{1+\beta}}\\ =&\, x^{-\alpha}\int_0^1\frac{1-s^{1+\alpha}}{c/x^{1+\alpha}+s^{1+\alpha}}\cdot\frac{{\rm d}s}{(1-s)^{1+\beta}},
\end{aligned}
\end{equation}
where $s=1-r/x$. To obtain an upper bound of the right-hand side of \eqref{equivalent inequality nonconstant}, we split the interval $[0,1]$ into two halves. For the first half interval, we have
\begin{equation*}
\begin{aligned}
\int_0^{1/2}\frac{1-s^{1+\alpha}}{c/x^{1+\alpha}+s^{1+\alpha}}\cdot\frac{{\rm d}s}{(1-s)^{1+\beta}}&\le2^{1+\beta}\int_0^{1/2}\frac{{\rm d}s}{c/x^{1+\alpha}+s^{1+\alpha}}\\
&\le2^{1+\beta+\alpha}\int_0^{1/2}\frac{{\rm d}s}{(c^{(1+\alpha)^{-1}}\!/x+s)^{1+\alpha}}\\
&\le\frac{2^{1+\beta+\alpha}}{\alpha}\frac{x^\alpha}{c^{\alpha/(1+\alpha)}},
\end{aligned}
\end{equation*}
where the second inequality is due to Jensen's inequality $t^{1+\alpha}+s^{1+\alpha}\ge2^{-\alpha}(t+s)^{1+\alpha}$ for $t,s,\alpha>0$. For the second half interval, we have
\begin{equation*}
\begin{aligned}
\int_{1/2}^1\frac{1-s^{1+\alpha}}{c/x^{1+\alpha}+s^{1+\alpha}}\cdot\frac{{\rm d}s}{(1-s)^{1+\beta}}&\le\frac{\alpha^{\alpha/(1+\alpha)}}{1+\alpha}\frac{2x^\alpha}{c^{\alpha/(1+\alpha)}}\int_{1/2}^1\frac{1-s^{1+\alpha}}{(1-s)^{1+\beta}}\,{\rm d}s\\
&\le\frac{\alpha^{\alpha/(1+\alpha)}}{1+\alpha}\frac{2x^\alpha}{c^{\alpha/(1+\alpha)}}\int_{1/2}^1\frac{(1+\alpha)(1-s)}{(1-s)^{1+\beta}}\,{\rm d}s\\
&=\frac{\alpha^{\alpha/(1+\alpha)}}{1-\beta}\frac{2^\beta x^\alpha}{c^{\alpha/(1+\alpha)}},
\end{aligned}
\end{equation*}
where the first inequality is due to Young's inequality for products: $a^{\gamma}b^{1-\gamma}\le \gamma a+(1-\gamma)b$ with $\gamma=\alpha/(1+\alpha),\;a=c(1+\alpha^{-1})/x^{1+\alpha}$ and $b=(1+\alpha)s^{1+\alpha}$; the second inequality is due to the concavity of $1-s^{1+\alpha}$ whose tangent at $s=1$ is $(1+\alpha)(1-s)$.

Therefore the right-hand side of \eqref{equivalent inequality nonconstant} can be bounded from above by
\begin{equation*}
\frac{2^\beta}{c^{\alpha/(1+\alpha)}}\Big(\frac{2^{1+\alpha}}{\alpha}+\frac{\alpha^{\alpha/(1+\alpha)}}{1-\beta}\Big).
\end{equation*}
Let
\begin{equation*}
c\ge\bigg(\frac{2^\beta}{\big\vert\lambda\Gamma(-\beta)\big\vert}\Big(\frac{2^{1+\alpha}}{\alpha}+\frac{\alpha^{\alpha/(1+\alpha)}}{1-\beta}\Big)\bigg)^{1+1/\alpha},
\end{equation*}
then \eqref{equivalent inequality nonconstant} will be satisfied and we are done.

\end{proof}


\section{Derivation of \texorpdfstring{$\mathbb E_x\big[(\tau_j)^n\big]$}{E tau j n} in Remark \ref{rmk:exptau}-(i)}\label{app:C_j_n}
By \eqref{eq:com} and the multinomial formula we have
$$
\mathbb E_x\big[(\tau_j)^n\big]= \mathbb E_x\!\Bigg[\bigg(\sum_{i=1}^j  E_i(S^c_{\tau_{i-1}})\bigg)^{\hspace{-3pt}n}\Bigg] = \hspace{-5pt}\sum_{n_1+...+n_j=n} \frac{n!}{\prod_{i=1}^jn_i!} \,\mathbb E_x\!\Bigg[\prod_{i=1}^jE_i(S^c_{\tau_{i-1}})^{n_i}\Bigg],\;n\in\mathbb N,
$$
where the right-hand side can be calculated using Lemma \ref{lem:iteratedexp}.

\begin{lemma}\label{lem:iteratedexp}
For $\{n_i\}_{i=1}^j\subseteq \mathbb N\cup\{0\}$   and $n=n_1+n_2+...+n_j\;(j\in\mathbb N) $, it holds that
\begin{align}\label{expectation of product of E_i to some powers}
\mathbb E_x\!\Bigg[\prod_{i=1}^jE_i(S^c_{\tau_{i\hspace{-0.5pt}-\!1}})^{n_i}\Bigg] 
&= \frac{x^{\beta n}\prod_{i=1}^{j}\Gamma(n_{i}+1) }{ \Gamma( \beta n_j+1) \Gamma(1-\beta)^{j-1} } \prod_{i=1}^{j-1}\frac{\Gamma\big(1-\beta+\beta\sum_{l=0}^{i-1}n_{j-l}\big)}{\Gamma\big(1+\beta\sum_{l=0}^{i}n_{j-l}\big)}  .
\end{align}
\end{lemma}

\begin{proof}
By Lemma \ref{lem:powerofE1} (with $m=0$), the left-hand side of \eqref{expectation of product of E_i to some powers} equals
\begin{align}\label{left hand side of {expectation of product of E_i to some powers}}
\mathbb E_x\!\bigg[E_j(S^c_{\tau_{j\hspace{-0.5pt}-\!1}})^{n_j}\prod_{i=1}^{j-1}E_i(S^c_{\tau_{i\hspace{-0.5pt}-\!1}})^{n_i}\bigg]
=\mathbb E_x\!\bigg[(S^c_{\tau_{j\hspace{-0.5pt}-\!1}})^{\beta n_j}\prod_{i=1}^{j-1}E_i(S^c_{\tau_{i\hspace{-0.5pt}-\!1}})^{n_i}\bigg]\frac{\Gamma(n_j+1)}{\Gamma(\beta n_j+1)}.
\end{align}
We now prove that the right-hand sides of \eqref{expectation of product of E_i to some powers} and \eqref{left hand side of {expectation of product of E_i to some powers}} are equal, by induction on $j$. When $j=1$, it is obvious; when $j\ge2$, suppose it is true for $j-1$, then we have
\begin{align*}
&\hspace{11pt}\text{The right-hand side of \eqref{left hand side of {expectation of product of E_i to some powers}}}\\
&\hspace{-6pt}=\hspace{5.2pt}\mathbb E_x\!\bigg[\Big( S^c_{\tau_{j\hspace{-0.5pt}-\hspace{-0.5pt}2}}+S^{j-1}_{E_{j\hspace{-0.5pt}-\!1}(S^c_{\tau_{j\hspace{-0.5pt}-\hspace{-0.5pt}2}})-} \Big)^{\!\beta n_j}E_{j\hspace{-0.5pt}-\!1}(S^c_{\tau_{j\hspace{-0.5pt}-\hspace{-0.5pt}2}})^{n_{j\hspace{-0.5pt}-\!1}}\prod_{i=1}^{j-2}E_i(S^c_{\tau_{i\hspace{-0.5pt}-\!1}})^{n_i}\bigg]\frac{\Gamma(n_j+1)}{\Gamma(\beta n_j+1)}\\
=&\,\mathbb E_x\!\Bigg[\mathbb E\!\bigg[\Big( S^c_{\tau_{j\hspace{-0.5pt}-\hspace{-0.5pt}2}}+S^{j-1}_{E_{j\hspace{-0.5pt}-\!1}(S^c_{\tau_{j\hspace{-0.5pt}-\hspace{-0.5pt}2}})-} \Big)^{\!\beta n_j}E_{j\hspace{-0.5pt}-\!1}(S^c_{\tau_{j\hspace{-0.5pt}-\hspace{-0.5pt}2}})^{n_{j\hspace{-0.5pt}-\!1}}\prod_{i=1}^{j-2}E_i(S^c_{\tau_{i\hspace{-0.5pt}-\!1}})^{n_i}\;\bigg\vert\;S^c_{\tau_{j\hspace{-0.5pt}-\hspace{-0.5pt}2}}\bigg]\Bigg]\frac{\Gamma(n_j+1)}{\Gamma(\beta n_j+1)}\\
=&\,\mathbb E_x\!\Bigg[\frac{\Gamma(n_{j\hspace{-0.5pt}-\!1}+1)}{ \Gamma(1-\beta)} \frac{ \Gamma(\beta n_j-\beta+1)}{\Gamma(\beta n_{j\hspace{-0.5pt}-\!1}+\beta n_j+ 1)}  (S^c_{\tau_{j\hspace{-0.5pt}-\hspace{-0.5pt}2}})^{\beta n_{j\hspace{-0.5pt}-\!1}+\beta n_j}\prod_{i=1}^{j-2}E_i(S^c_{\tau_{i\hspace{-0.5pt}-\!1}})^{n_i}\Bigg]\frac{\Gamma(n_j+1)}{\Gamma(\beta n_j+1)}\\
=&\frac{\Gamma(n_{j\hspace{-0.5pt}-\!1}\!+\!1)}{ \Gamma(1-\beta)} \frac{ \Gamma(\beta n_j-\beta+1)}{\Gamma(n_{j\hspace{-0.5pt}-\!1}\!+\!n_j\!+\! 1)}\frac{\Gamma(n_j+1)}{\Gamma(\beta n_j\!+\!1)}\mathbb E_x\!\bigg[\!(S^c_{\tau_{j\hspace{-0.5pt}-\hspace{-0.5pt}2}})^{\beta n_{j\hspace{-0.5pt}-\!1}+\beta n_j}\hspace{-3pt}\prod_{i=1}^{j-2}\hspace{-3pt}E_i(S^c_{\tau_{i\hspace{-0.5pt}-\!1}})^{n_i}\!\bigg]\!\frac{\Gamma(n_{j\hspace{-0.5pt}-\!1}+n_j+1)}{\Gamma(\beta n_{j\hspace{-0.5pt}-\!1}\!+\!\beta n_j\!+\!1)}\\
=&\,\text{The right-hand side of \eqref{expectation of product of E_i to some powers}},
\end{align*}
where the third equality is due to Lemma \ref{lem:powerofE1}, the equality in law between $S^1$ and $S^{j-1}$, as well as the independence of $S^{j-1}$ and $\{S^c_t\}_{t\in[0,\hspace{0.5pt}\tau_{j\hspace{-0.5pt}-\hspace{-0.5pt}2}]}$; the last equality is due to the induction hypothesis.

\end{proof}

\begin{lemma}\label{lem:powerofE1}
For all $n\in\mathbb N\cup \{0\}$ and $\alpha\ge 0$, we have
\begin{equation}
\mathbb E\big[E_1(x)^n (x+S^1_{E_1-})^\alpha\big] = \frac{\Gamma(n+1)}{ \Gamma(1-\beta)} \frac{ \Gamma(\alpha-\beta+1)}{\Gamma(\alpha+ n\beta+1)}  x^{\beta n+\alpha}  .
\label{eq:powerofE1X_1}
\end{equation}
 
\end{lemma}
\begin{proof} \emph{[suggested by Andreas Kyprianou and Jean Bertoin]}
Let $p_s$ be the density of the $\beta$-stable subordinator $-S^1$ at time $s>0$, and recall its Mellin transform for each $n\in \mathbb N$,
\begin{equation}
\int_0^\infty \frac{s^{n-1}}{\Gamma(n)} p_s(x)\,\dd s =\frac{x^{n\beta-1}}{\Gamma(\beta n)},\;x>0.
\label{eq:ident_lap}
\end{equation}  
Applying the compensation formula as in the proof of \cite[Proposition III.2-(i)]{bertoin} with $f(x)=x^n$ and $g(x)=x^\alpha$, we obtain
\begin{align*}
\mathbb E\Big[f\big(E_1(x)\big)g\big(x+S^1_{E_1(x)-}\big)\Big]&=\mathbb E\bigg[\raisebox{3pt}{$\sum\limits_{t\ge 0}f(t)g(x+S^1_{t-})\mathbf 1_{\{-S^1_{t-}<x\}}\mathbf 1_{\{-\Delta S_t>x+S^1_{t-}\}}$}\bigg]\\ 
&=\int_0^\infty\mathbb E\Big[f(t)g(x+S^1_{t-})\mathbf 1_{\{-S^1_{t-}<x\}}\int_{x+S^1_{t-}}^\infty \frac{z^{-1-\beta}}{\big|\Gamma(-\beta)\big|} \dd {z}\Big]\dd{t}\\
&=\int_0^\infty \int_0^xf(t)g(x-y)p_t(y)\int_{x-y}^\infty  \frac{z^{-1-\beta}}{\big|\Gamma(-\beta)\big|} \dd z\,\dd y\,\dd t \\
&=\int_0^x(x-y)^\alpha\frac{(x-y)^{-\beta}}{\Gamma(1-\beta)}\int_0^\infty t^n p_t(y) \dd{t}\dd{y}\\
&=\int_0^x\frac{(x-y)^{\alpha-\beta}}{\Gamma(1-\beta)}\Gamma(n+1)\frac{y^{n\beta+\beta-1}}{\Gamma(n\beta+\beta)}\dd{y}\\
&=x^{n\beta+\alpha }\frac{ \Gamma(\alpha-\beta+1)}{\Gamma(\alpha+\beta n+1)} \frac{\Gamma(n+1)}{ \Gamma(1-\beta)}.
\end{align*}

\end{proof}


\let\oldbibliography\thebibliography
\renewcommand{\thebibliography}[1]{\oldbibliography{#1}
\setlength{\itemsep}{0pt}}

\end{document}